\documentclass[12pt]{amsart}
\usepackage{amssymb}
\usepackage{amsthm}
\usepackage{amsfonts}
\usepackage{mathtools}
\usepackage{amsmath}
\usepackage{enumitem}
\usepackage{mathrsfs}
\usepackage{hyperref}
\usepackage{tikz-cd}
\usepackage{MnSymbol}
\usepackage{inputenc}
\usepackage{times}
\usepackage{graphicx}
\usepackage{subfigure}
\usepackage{color}
\usepackage{comment}
\usepackage{stmaryrd}

\usepackage{slashed}

\definecolor{blue2}{cmyk}{.94,.11,0,0}
\theoremstyle{plain}
\newtheorem{mythm}{Theorem}[section]

\newtheorem{mylem}[mythm]{Lemma}
\newtheorem{mypro}[mythm]{Proposition}
\newtheorem{mycor}[mythm]{Corollary}

\newtheorem{myrmk}[mythm]{Remark}

\newtheorem{mydef}[mythm]{Definition}

\newtheorem{myassumpt}[mythm]{Assumption}

\numberwithin{equation}{section}
\DeclareMathOperator{\Id}{Id}

\DeclareMathOperator{\re}{Re}
\DeclareMathOperator{\im}{Im}

\DeclareMathOperator{\Vol}{Vol}

\DeclareMathOperator{\ind}{ind}

\DeclareMathOperator{\Cl}{Cl}

\newcommand{\inumber}{\sqrt{-1}}

\newcommand{\Z}{\bb{Z}}

\newcommand{\R}{\bb{R}}

\newcommand{\w}{\wedge}
\newcommand{\tb}{\text{ }}
\newcommand{\ol[1]}{\overline {#1}}

\newcommand{\sqrthalf}{\frac{1}{\sqrt{2}}}

\newcommand{\J}{\overline{\delta}}

\newcommand\bb[1]{\mathbb{#1}}

\newcommand{\derivativef}{\alpha}
\newcommand{\hyperconn}{\nabla^{\Sigma}}
\newcommand{\hypermodconn}{\Tilde{\nabla}^{\Sigma}}

\newcommand{\hyperc}{c^{\Sigma}}

\newcommand{\hyperdirac}{\slashed{D}^{\Sigma}}
\newcommand{\hypermoddirac}{\Tilde{\slashed{D}}^{\Sigma}}

\newcommand{\xione}{\xi^{\Sigma}}
\newcommand{\xitwo}{T\Sigma_{\text{cpx}}}
\newcommand{\TSicpx}{\xitwo}
\newcommand{\TSileg}{T\Sigma_{\text{lag}}}
\newcommand{\NSicpx}{N\Sigma_{\text{cpx}}}
\newcommand{\NSileg}{N\Sigma_{\text{lag}}}
\newcommand{\NSi}{N\Sigma}
\newcommand{\TSi}{T\Sigma}
\newcommand{\TdSicpx}{T^{*}\Sigma_{\text{cpx}}}
\newcommand{\TdSileg}{T^{*}\Sigma_{\text{lag}}}
\newcommand{\NdSicpx}{N^{*}\Sigma_{\text{cpx}}}
\newcommand{\NdSileg}{N^{*}\Sigma_{\text{lag}}}
\newcommand{\NdSi}{N^{*}\Sigma}
\newcommand{\TdSi}{T^{*}\Sigma}

\newcommand{\tTSicpx}{T\Tilde{\Sigma}_{\text{cpx}}}
\newcommand{\tTSileg}{T\Tilde{\Sigma}_{\text{lag}}}
\newcommand{\tNSicpx}{N\Tilde{\Sigma}_{\text{cpx}}}
\newcommand{\tNSileg}{N\Tilde{\Sigma}_{\text{lag}}}

\newcommand{\tTSi}{T\Tilde{\Sigma}}

\newcommand{\ic}{i_{c}}
\newcommand{\il}{i_{l}}
\newcommand{\ipc}{i'_{c}}
\newcommand{\ipl}{i'_{l}}
\newcommand{\ip}{i'}
\newcommand{\bari}{\bar{i}}
\newcommand{\barI}{\bar{I}}
\newcommand{\jc}{j_{c}}
\newcommand{\jl}{j_{l}}
\newcommand{\jpc}{j'_{c}}
\newcommand{\jpl}{j'_{l}}
\newcommand{\jp}{j'}
\newcommand{\barj}{\bar{j}}
\newcommand{\barJ}{\bar{J}}
\newcommand{\kc}{k_{c}}
\newcommand{\kl}{k_{l}}
\newcommand{\kpc}{k'_{c}}
\newcommand{\kpl}{k'_{l}}

\newcommand{\bark}{\bar{k}}
\newcommand{\barK}{\bar{K}}

\def\QTR#1#2{{\csname#1\endcsname #2}}

\begin{document}

\title{A Reilly-type inequality for CR Dirac operators on CR manifolds and
applications}

\author{Jih-Hsin Cheng}
\address{Institute of Mathematics, Academia Sinica, Taipei, Taiwan, ROC}
\email{jhcheng@alum.sinica.edu.tw}

\author{Hung-Lin Chiu}
\address{Department of Mathematics, National Tsing-Hua University, Hsinchu, Taiwan, ROC}
\email{hlchiu@math.nthu.edu.tw}

\author{Wei-Ting Kao}
\address{Institute of Mathematics, Academia Sinica, Taipei, Taiwan, ROC}
\email{wtkao@as.edu.tw}


\dedicatory{Dedicated to Professor Sorin Dragomir on the occasion of his
70th birthday}

\subjclass[2020]{32V05, 53C17}

\keywords{Pseudohermitian structure, Heisenberg group, CR Reilly inequality, CR Dirac operator, modified hypersurface CR Dirac operator, \(p\)-mass, APS-type boundary condition}

\begin{abstract}
The first two authors derived the
Weitzenb\"{o}ck-type Formula for a CR Dirac operator. In this paper, we
apply the above-mentioned formula and follow the process as Riemannian case to derive a Reilly-type inequality. We give two
applications for the CR Reilly inequality. One is a lower bound estimate of
the p-mass in terms of the first eigenvalue of the modified hypersurface CR
Dirac operator. Another one is to solve the CR Dirac equation on a bounded
domain with an $S^{1}$-action and an APS-type boundary condition.
\end{abstract}

\maketitle

\section{Introduction}

We learn from the Riemannian geometry that the Reilly formula/inequality for
a domain with boundary is useful for the study of the spectral analysis, in
particular, some eigenvalue estimates (see, for instance, \cite%
{farinelli1998spectrum}, \cite{hijazi2001dirac}). In this paper, we explore
a Reilly-type formula/inequality for the CR Dirac operator on a (smooth)
domain $\Omega $ with boundary $\Sigma $ in a CR/pseudohermitian manifold.
For basic material in CR/pseudohermitian geometry, we refer the reader to
Sections \ref{Sec2} and \ref{Sec3}. For operators $\Tilde{\slashed{D}}^{\Sigma
},$ $\slashed{D},$ $\Lambda ,$ $\nabla ,$ see (\ref{mhDirac}), (\ref{Dirac}), (%
\ref{Cl2}), (\ref{spinconndef}) (and Remark \ref{R-2-z}), respectively. Let $%
H,$ $W$ denote the $p$-mean curvature (see (\ref{pmc})) and the
Tanaka-Webster scalar curvature (see, for instance, (\ref{2-4a}) in \cite%
{chengchiu2022positivemass5}) respectively. For $T$ the Reeb vector field,
see (\ref{Rvf}). For $d\Sigma $ and $dV$, see (\ref{dsigma}) and (\ref{dv})
respectively.

\begin{mythm}
\label{thm1} With the notations above, we have 1) a Reilly-type formula for
the CR Dirac operator $\slashed{D}$: 
\begin{equation}  \label{1-1a}
\begin{aligned} &\oint_{\Sigma} \langle \Tilde{\slashed{D}}^{\Sigma} \psi,
\psi \rangle \, d\Sigma - \frac{1}{2}\oint_{\Sigma} H |\psi|^{2} \, d\Sigma
\\ &= \int_{\Omega} \Big[ -|\slashed{D} \psi|^{2} + W |\psi|^{2} + |\nabla
\psi|^{2} - 2\re[ \big\langle \Lambda \nabla_{T} \psi, \psi \big\rangle]
\Big] dV. \end{aligned}
\end{equation}%
\newline

2) a Reilly-type inequality for the CR Dirac operator $\slashed{D}$: 
\begin{equation}  \label{1-2a}
\begin{aligned} &\oint_{\Sigma} \langle \Tilde{\slashed{D}}^{\Sigma} \psi,
\psi \rangle \, d\Sigma -\frac{1}{2} \oint_{\Sigma} H |\psi|^{2} \, d\Sigma
\\ &\geq \int_{\Omega} \Big[ -\tfrac{2n-1}{2n} |\slashed{D} \psi|^{2} + W
|\psi|^{2} - 2\re[ \big\langle \Lambda \nabla_{T} \psi, \psi \big\rangle]
\Big] dV. \end{aligned}
\end{equation}
\end{mythm}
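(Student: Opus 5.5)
The plan is to follow the Riemannian proof of the spinorial Reilly formula (Hijazi--Montiel--Zhang). We integrate the CR Weitzenb\"ock-type formula for $\slashed{D}$ from \cite{chengchiu2022positivemass5} over $\Omega$. Integration by parts produces boundary terms, and we then identify these with the modified hypersurface CR Dirac operator $\Tilde{\slashed{D}}^{\Sigma}$ from (\ref{mhDirac}) and the $p$-mean curvature $H$ from (\ref{pmc}).

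\emph{Step 1 (pointwise identity).} Start from the Weitzenb\"ock formula, schematically
\[
\slashed{D}^{2}=\nabla^{*}\nabla+W-2\Lambda\nabla_{T}
\]
with suitable normalisations, where $\nabla^{*}\nabla$ is the horizontal (sub-Laplacian-type) rough Laplacian. Pair it with $\psi$ and rewrite $\langle\slashed{D}^{2}\psi,\psi\rangle$ and $\langle\nabla^{*}\nabla\psi,\psi\rangle$ as divergences of horizontal vector fields plus $|\slashed{D}\psi|^{2}$ and $|\nabla\psi|^{2}$. Concretely, define $X$ by $\langle X,Y\rangle=\langle c(Y)\slashed{D}\psi,\psi\rangle$ and $Z$ by $\langle Z,Y\rangle=\langle\nabla_{Y}\psi,\psi\rangle$ for horizontal $Y$. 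Then
\[
\operatorname{div}X=-|\slashed{D}\psi|^{2}+\langle\slashed{D}^{2}\psi,\psi\rangle
\quad\text{and}\quad
\operatorname{div}Z=|\nabla\psi|^{2}-\langle\nabla^{*}\nabla\psi,\psi\rangle,
\]
computed with the Tanaka--Webster connection. Its torsion does not affect the divergence with respect to $dV=\theta\wedge(d\theta)^{n}$ for horizontal fields, though this needs checking. Take real parts; the term $2\re\langle\Lambda\nabla_{T}\psi,\psi\rangle$ survives because it is not a divergence.

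\emph{Step 2 (boundary terms).} Apply the divergence theorem. The boundary integrand is $\langle c(\nu)\slashed{D}\psi+\nabla_{\nu}\psi,\psi\rangle$, where $\nu$ is the horizontal unit normal (the Legendrian normal $e_{2n}$ in the notation of $N\Sigma$) and $d\Sigma$ is as in (\ref{dsigma}). Split $\slashed{D}=\sum_{j}c(e_{j})\nabla_{e_{j}}$ into its tangential part along $\xi^{\Sigma}$, the normal part, and the part along $Je_{2n}$, which is tangent to $\Sigma$ but not horizontal for $\Sigma$. Using $c(\nu)^{2}=-1$, the $\nabla_{\nu}$ terms cancel. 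What remains is $\langle c(\nu)\sum_{j\neq 2n}c(e_{j})\nabla_{e_{j}}\psi,\psi\rangle$. Then rewrite $\nabla_{e_{j}}$ on $\Sigma$ via the Gauss-type formula: $\nabla=\nabla^{\Sigma}+\frac12\sum(\text{second fundamental form})\,c(e_{j})c(\nu)$ plus torsion/$T$-components. The trace of the second fundamental form over $\xi^{\Sigma}$ gives $-\frac{1}{2}H|\psi|^{2}$. The leftover operator, including the $Je_{2n}$-direction and the $\alpha T$ correction, should be exactly $\Tilde{\slashed{D}}^{\Sigma}$. This identification is the main obstacle: one must match the definition (\ref{mhDirac}) term by term, tracking the $T$-component of $Je_{2n}$ and the torsion contributions. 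I would first verify it on the Heisenberg group, where torsion vanishes, and then add the torsion terms, which should cancel by symmetry of $A$. Finally, check that the purely imaginary pieces drop out after taking real parts, using the self-adjointness of $\Tilde{\slashed{D}}^{\Sigma}$ on closed $\Sigma$. This yields (\ref{1-1a}).

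\emph{Step 3 (inequality).} Introduce the horizontal twistor operator $P_{Y}\psi=\nabla_{Y}\psi+\frac{1}{2n}c(Y)\slashed{D}\psi$, with horizontal rank $2n$. The standard orthogonal decomposition gives
\[
|\nabla\psi|^{2}=|P\psi|^{2}+\frac{1}{2n}|\slashed{D}\psi|^{2}\ge\frac{1}{2n}|\slashed{D}\psi|^{2}.
\]
This is just Cauchy--Schwarz on $\sum_{j}c(e_{j})\nabla_{e_{j}}\psi$ over $2n$ terms. Substituting it into (\ref{1-1a}) gives $-|\slashed{D}\psi|^{2}+|\nabla\psi|^{2}\ge-\frac{2n-1}{2n}|\slashed{D}\psi|^{2}$, which is (\ref{1-2a}).
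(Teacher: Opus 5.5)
Your Steps 1 and 3 match the paper's argument: the two divergence identities (the paper's one-forms $w_{1},w_{2}$) combined with Lemma \ref{Weitzen}, and the twistor identity (\ref{twist}). The gap is Step 2, which carries the actual content of (\ref{1-1a}). You leave it as the ``main obstacle'', and the mechanism you propose for it is not the one that operates.

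After the $\nabla_{e_{2n}}$ terms cancel, the integrand is $-\langle c(e^{2n})\sum_{i'=1}^{2n-1}c(e^{i'})\nabla_{e_{i'}}\psi,\psi\rangle$. This contains only horizontal derivatives tangent to $\Sigma$. Indeed $Je_{2n}=-e_{n}$ lies in $\xi\cap T\Sigma$ and has no $T$-component. The spinorial Gauss formula is exactly $\nabla_{X}=\nabla^{\Sigma}_{X}+\frac{1}{2}c(AX)c(e^{2n})$ (Proposition \ref{P-3-1}), with no torsion or $T$-terms. Expanding gives $\slashed{D}^{\Sigma}=-c(e^{2n})\slashed{D}-\nabla_{e_{2n}}-\frac{1}{2}\sum_{i',j'}h_{j'i'}c(e^{i'})c(e^{j'})$.

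Here is the CR phenomenon you miss: $h$ is not symmetric. The bracket $[e_{i},e_{j}]$ has the component $-2\overline{\delta}_{ij}T=-2\overline{\delta}_{ij}(e_{0}-\alpha e_{2n})$ and must be tangent to $\Sigma$, so $h_{ji}-h_{ij}=2\alpha\overline{\delta}_{ji}$ (cf. (\ref{hyperintcondi})). The diagonal part gives $\frac{1}{2}H$, a trace over all $2n-1$ directions of $\xi\cap T\Sigma$ with $e_{n}$ included. The antisymmetric part does not cancel and gives $\alpha\Lambda^{\Sigma}$; this is (\ref{diraceqnhyper}). These $\alpha$-terms come from $d\theta$, not from torsion, so they are present even on the Heisenberg group. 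Your plan of checking there and then letting ``torsion cancel by symmetry of $A$'' therefore cannot produce the identification you want.

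For the same reason the leftover operator is not exactly $\Tilde{\slashed{D}}^{\Sigma}$. Using $\Tilde{\slashed{D}}^{\Sigma}=\slashed{D}^{\Sigma}+\alpha c(e^{n})c(e^{2n})$ and $\Lambda=\Lambda^{\Sigma}+c(e^{n})c(e^{2n})$, one obtains (\ref{moddiraceqnhyper}). So the boundary integrand is $\langle\Tilde{\slashed{D}}^{\Sigma}\psi,\psi\rangle-\frac{1}{2}H|\psi|^{2}-\alpha\langle\Lambda\psi,\psi\rangle$.

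The last term drops out of the real part because $\Lambda$ acts by $\sqrt{-1}(2k-n)$ on $(0,k)$-forms (\ref{hardT2}). Hence $\langle\Lambda\psi,\psi\rangle$ is purely imaginary pointwise; this does not follow from self-adjointness. Lemma \ref{selfadjforhyperdirac} is needed for a different purpose: to know that $\oint_{\Sigma}\langle\Tilde{\slashed{D}}^{\Sigma}\psi,\psi\rangle\,d\Sigma$ is real, so that it equals the real part appearing after Step 1.

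Three ingredients are missing:
\begin{enumerate}
\item the antisymmetric part of $h$;
\item the completion of $\alpha\Lambda^{\Sigma}$ to $\alpha\Lambda$ by the modification term;
\item the skew-Hermiticity of $\Lambda$.
\end{enumerate}
Once you supply them, your outline becomes the paper's proof. Without them, Step 2 does not establish (\ref{1-1a}).
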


For the singularity issue on the boundary $\Sigma$, we refer the reader to
Remark \ref{R-4-1}.

Next, we want to apply the above formula \ref{1-1a} to an asymptotically
flat pseudohermitian manifold. For such a pseudohermitian manifold $%
(M,J,\theta ),$ we can talk about the $p$-mass $m(J,\theta )$ (see \cite%
{chengchiu2022positivemass5}; the $p$-mass plays a key role in solving the
Yamabe minimizer problem on CR manifolds). As an application of the above
Reilly-type formula, we obtain an integral formula for $m(J,\theta )$ below.

\begin{mythm}
\label{pmassthm} Let $(M,J,\theta )$ be an asymptotically flat
pseudohermitian manifold with an inner bounded boundary $\Sigma $. Assume $%
\dim M=5$ and $\psi \in \Gamma (M,\slashed{S}^{odd})$ (see (\ref{2-5a}) for the
notation $\slashed{S}^{odd}$) is a solution of $\slashed{D}^{2}\psi =0$ with some
boundary condition in \cite{chengchiu2022positivemass5} (see Proposition \ref%
{P-5-1}). Then we have 
\begin{equation}
\begin{aligned} &cm(J,\theta) -\oint\limits_{\Sigma} \langle
\Tilde{\slashed{D}}^{\Sigma} \psi, \psi \rangle
d\Sigma+\oint\limits_{\Sigma} \frac{1}{2} H|\psi|^{2} d\Sigma\\
=&\int\limits_{\Omega} [-|\slashed{D} \psi|^{2}+W|\psi|^{2}+|\nabla
\psi|^{2}]dV \end{aligned}  \label{pmassformula}
\end{equation}%
where $c$ is a positive constant.
\end{mythm}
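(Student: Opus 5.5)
We apply the Reilly-type formula (\ref{1-1a}) of Theorem \ref{thm1} on a truncated region and then let the outer boundary go to infinity. Let $\Omega_R$ be the region of $M$ bounded by the inner boundary $\Sigma$ and a large coordinate "sphere" $S_R$ in the asymptotically flat end, for instance a level set of the Heisenberg norm $\rho=R$. Then $\partial\Omega_R=\Sigma\cup S_R$, where $\Sigma$ carries the induced orientation, which is opposite to the one coming from the end. Formula (\ref{1-1a}) on $\Omega_R$ gives
\[
\oint_{S_R}\big(\langle \Tilde{\slashed{D}}^{\Sigma}\psi,\psi\rangle-\tfrac12 H|\psi|^2\big)d\Sigma-\oint_{\Sigma}\big(\langle \Tilde{\slashed{D}}^{\Sigma}\psi,\psi\rangle-\tfrac12 H|\psi|^2\big)d\Sigma=\int_{\Omega_R}\Big[-|\slashed{D}\psi|^2+W|\psi|^2+|\nabla\psi|^2-2\re\langle\Lambda\nabla_T\psi,\psi\rangle\Big]dV .
\]
The signs in front of the $\Sigma$-integral come from the outward normal on $\Sigma$ pointing into the hole. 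They account for the sign pattern in (\ref{pmassformula}).

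\textbf{Step 1: the $\Lambda$-term.} We show that $\re\langle\Lambda\nabla_T\psi,\psi\rangle$ vanishes pointwise for $\psi\in\Gamma(M,\slashed{S}^{odd})$ when $\dim M=5$ ($n=2$). The idea is that $\Lambda$ (see (\ref{Cl2})) is built from Clifford multiplication by the Kähler-type form on the contact distribution. It acts on each graded piece of $\slashed{S}$ by a scalar multiple of $\inumber$. Since $\nabla_T$ preserves the parity (and degree) decomposition, $\langle\Lambda\nabla_T\psi,\psi\rangle$ equals $\inumber c\langle\nabla_T\psi,\psi\rangle$ summed over components. This alone only makes the term purely imaginary if $\nabla_T$ is skew-type, which it is not. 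So instead we use the explicit eigenvalues: in dimension $5$ the odd part $\slashed{S}^{odd}=\Lambda^{0,1}$ is a single degree, and $\Lambda$ acts there by $\inumber(2q-n)=0$ for $q=1$, $n=2$. Hence $\Lambda\psi=0$ on $\slashed{S}^{odd}$, and the term drops. This is presumably why the hypotheses $\dim M=5$ and odd spinors are imposed.

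\textbf{Step 2: the boundary term at infinity.} We show that
\[
\lim_{R\to\infty}\oint_{S_R}\big(\langle\Tilde{\slashed{D}}^{\Sigma}\psi,\psi\rangle-\tfrac12H|\psi|^2\big)d\Sigma=c\,m(J,\theta).
\]
We use the solution $\psi$ of $\slashed{D}^2\psi=0$ from Proposition \ref{P-5-1}. It is asymptotic to a constant spinor $\psi_0$ of unit norm, with $\psi-\psi_0$ decaying at the rates given in \cite{chengchiu2022positivemass5}. On $S_R$ we rewrite $\langle\Tilde{\slashed{D}}^{\Sigma}\psi,\psi\rangle-\tfrac12H|\psi|^2$ through the relation between the modified hypersurface Dirac operator and $\slashed{D}$, namely the hypersurface Dirac operator plus the normal covariant derivative plus the mean-curvature term. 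This expresses the integrand as the Witten-type boundary integrand $\langle\nabla_\nu\psi+\nu\cdot\slashed{D}\psi,\psi\rangle$, which is exactly the boundary term in the CR positive mass argument of \cite{chengchiu2022positivemass5}. That paper already identifies the limit of this term as a positive constant times the $p$-mass, using the asymptotic expansion of $\theta$ and $J$ and the decay of $\psi-\psi_0$.

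\textbf{Step 3: the interior integral.} The integral over $\Omega_R$ converges as $R\to\infty$, since each term is integrable by the decay estimates ($\nabla\psi\in L^2$, $W$ decaying fast enough, and $\slashed{D}\psi\in L^2$). Letting $R\to\infty$ then yields (\ref{pmassformula}).

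The main obstacle is Step 2. Matching the limit of the CR boundary expression with the mass integrand requires careful bookkeeping of the Heisenberg-scale decay. The $T$-direction behaves like order $2$, so error terms of order $\rho^{-2}\cdot\rho^{-1}$ must be shown to integrate to $o(1)$ over $S_R$, whose area grows like $R^{2n+1}$. The $H|\psi|^2$ term of $S_R$ must also be absorbed correctly.
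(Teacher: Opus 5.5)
Your proposal is correct and is essentially the paper's proof: you apply Stokes on the region between $\Sigma$ and a large Heisenberg sphere, use the Reilly formula on $\Sigma$ with the normal pointing into the hole, kill the torsion term via $\Lambda=0$ on $\slashed{S}^{odd}$ for $n=2$ (with $\nabla_T$ preserving $\slashed{S}^{odd}$), and take the limit at infinity from \cite{chengchiu2022positivemass5}. The only cosmetic difference is that the paper uses the divergence formula (\ref{divformula}) directly on the large sphere rather than passing through $\Tilde{\slashed{D}}^{\Sigma}$ and undoing it via (\ref{moddiraceqnhyper}), so no separate bookkeeping of the $H|\psi|^{2}$ term there is needed; it also disposes of the $c(e^{i})\slashed{D}\psi$ part of the Witten integrand separately using $\slashed{D}\psi=O(\rho^{-6+\varepsilon})$.
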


Denote by $\slashed{S}^{\Sigma ,odd}$ the odd spinor bundle $\slashed{S}^{odd}$
restricted to $\Sigma $ (cf. (\ref{2-5a}), (\ref{3-9a})).

\begin{mycor}
\label{pmasscor} Suppose we are in the situation of Theorem \ref{pmassthm}
with the Tanaka-Webster (scalar) curvature $W\geq 0$. Assume further that
the solution $\psi $ in \cite{chengchiu2022positivemass5} (cf. Proposition %
\ref{P-5-1}) satisfies $\slashed{D}\psi =0$ and $\Sigma $ is a $p$-minimal
hypersurface in $M$. Then we have 
\begin{equation}
cm(J,\theta )\geq \lambda _{1}||\psi ||_{L^{2}(\Sigma ,\slashed{S}^{\Sigma
,odd})}^{2}  \label{massineqn}
\end{equation}%
where $\lambda _{1}$ is the first positive eigenvalue of $\Tilde{\slashed{D}}%
^{\Sigma}$ in $L^{2}(\Sigma ,\slashed{S}^{\Sigma ,odd})$.
\end{mycor}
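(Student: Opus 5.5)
Starting from the integral formula (\ref{pmassformula}) of Theorem \ref{pmassthm}, I will substitute the two extra hypotheses of the corollary and then bound each side.

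\emph{Step 1: simplify.} Since $\slashed{D}\psi=0$, the term $-|\slashed{D}\psi|^{2}$ on the right of (\ref{pmassformula}) vanishes. Since $\Sigma$ is $p$-minimal, $H\equiv 0$ on $\Sigma$, so the boundary term $\frac12\oint_\Sigma H|\psi|^2\,d\Sigma$ also drops out. We are left with
\begin{equation*}
cm(J,\theta)-\oint_{\Sigma}\langle \Tilde{\slashed{D}}^{\Sigma}\psi,\psi\rangle\, d\Sigma=\int_{\Omega}\big[W|\psi|^{2}+|\nabla\psi|^{2}\big]\,dV\ \ge 0,
\end{equation*}
where nonnegativity uses $W\ge 0$. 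Hence
\begin{equation*}
cm(J,\theta)\ge \oint_{\Sigma}\langle \Tilde{\slashed{D}}^{\Sigma}\psi,\psi\rangle\, d\Sigma .
\end{equation*}

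\emph{Step 2: spectral lower bound on $\Sigma$.} It remains to show
\begin{equation*}
\oint_\Sigma\langle \Tilde{\slashed{D}}^{\Sigma}\psi,\psi\rangle\, d\Sigma\ \ge\ \lambda_1\|\psi\|^2_{L^2(\Sigma,\slashed{S}^{\Sigma,odd})}.
\end{equation*}
This is where the boundary condition from \cite{chengchiu2022positivemass5} (Proposition \ref{P-5-1}) enters. I expect it to be an APS-type condition: the restriction $\psi|_\Sigma$ lies in the closed span of eigenspinors of $\Tilde{\slashed{D}}^{\Sigma}$ with positive eigenvalues, or equivalently its projection onto the nonpositive spectral part vanishes.

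To use this, I first check that $\Tilde{\slashed{D}}^{\Sigma}$ is formally self-adjoint on the closed hypersurface $\Sigma$. The modification in (\ref{mhDirac}) is presumably designed exactly for this, so I treat it as a known property. Then $\Tilde{\slashed{D}}^{\Sigma}$ has a discrete real spectrum and an $L^2$-orthonormal eigenbasis $\{\phi_k\}$ of $\slashed{S}^{\Sigma,odd}$. Expand $\psi|_\Sigma=\sum_k a_k\phi_k$ with all $\lambda_k>0$. This gives
\begin{equation*}
\oint_\Sigma\langle\Tilde{\slashed{D}}^{\Sigma}\psi,\psi\rangle\, d\Sigma=\sum_k\lambda_k|a_k|^2\ \ge\ \lambda_1\sum_k|a_k|^2=\lambda_1\|\psi\|^2_{L^2(\Sigma)}.
\end{equation*}
Combining this with Step 1 gives (\ref{massineqn}).

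\emph{Main obstacle.} The delicate point is Step 2: confirming that the boundary condition in Proposition \ref{P-5-1} really forces $\psi|_\Sigma$ into the positive spectral subspace of $\Tilde{\slashed{D}}^{\Sigma}$, rather than of some related operator such as the unmodified $\hyperdirac$. If the condition is stated for a different operator, I would need to relate it to $\Tilde{\slashed{D}}^{\Sigma}$. A second concern is the singular set of $\Sigma$ (Remark \ref{R-4-1}), which could obstruct the spectral decomposition. I would handle it by the same cutoff argument used there, so that the integration by parts and the self-adjointness survive.
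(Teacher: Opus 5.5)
Your Step 1 is exactly the paper's proof of this corollary, and the paper stops there. It asserts (\ref{massineqn}) directly from (\ref{pmassformula}) using $W\ge 0$, $H=0$, $\slashed{D}\psi=0$, but this alone only gives $cm(J,\theta)\ge\oint_\Sigma\langle\hypermoddirac\psi,\psi\rangle\,d\Sigma$.

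The obstacle you flag in Step 2 is a genuine gap, and nothing in the paper closes it. Proposition \ref{P-5-1} concerns a spinor on all of $M$ and imposes no boundary condition at all, so nothing forces $\psi|_\Sigma$ into the positive spectral subspace of $\hypermoddirac$. Without that, the step is false. Any component of $\psi|_\Sigma$ in $\ker\hypermoddirac$ or along negative eigenspinors pushes the boundary term below $\lambda_1\|\psi\|^2_{L^2(\Sigma)}$, and can even make it negative.

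So the condition $\pi_+(\psi|_\Sigma)=\psi|_\Sigma$ of Definition \ref{D-6-z} must be read as the content of ``some boundary condition'' in Theorem \ref{pmassthm} and stated as a hypothesis. Note two things about it:
\begin{itemize}
\item it is the opposite of the condition $\psi|_\Sigma\in\Gamma^{APS}_{-}$ used in Section \ref{Sec6};
\item the existence of a $\psi$ with $\slashed{D}\psi=0$, the asymptotics of Proposition \ref{P-5-1}, and this boundary behaviour is not established anywhere.
\end{itemize}
With that hypothesis made explicit, your argument is complete. It is precisely what the paper's one-line proof uses implicitly.

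Two smaller corrections. First, self-adjointness (Lemma \ref{selfadjforhyperdirac}) does not by itself give a discrete spectrum. $\hypermoddirac$ differentiates only along $e_1,\dots,e_{2n-1}$, not along $e_0$, so it is not elliptic on the $2n$-dimensional $\Sigma$. The paper obtains subellipticity only under Assumption \ref{assum} on $\Gamma_{\le N}$ (Lemma \ref{hyperellipticest}). You do not need discreteness, though: if $\psi|_\Sigma$ lies in the spectral subspace of $[\lambda_1,\infty)$, the spectral theorem gives $\oint_\Sigma\langle\hypermoddirac\psi,\psi\rangle\,d\Sigma\ge\lambda_1\|\psi\|^2_{L^2(\Sigma)}$ directly.

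Second, Remark \ref{R-4-1} is a measure-zero argument, not a cutoff, and it only justifies the boundary integrals. At singular points the frame $e_n,e_{2n}$ is undefined and $\alpha$ blows up. For $\lambda_1$ to make sense you should assume $\Sigma$ is nonsingular, as the paper does in Section \ref{Sec6}.
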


We consider a general inhomogeneous boundary value problem for the CR Dirac
operator (see the notation $\pi _{+}$ below in Definition \ref{D-6-z}): 
\begin{equation}
\begin{array}{rlc}
\slashed{D}\psi & =\phi & \text{on }\Omega , \\ 
\pi _{+}(\psi) & =\pi _{+}(\rho ) & \text{on }\Sigma .%
\end{array}
\label{1-5}
\end{equation}

\noindent The notation $\llangle\phi ,\chi \rrangle$ in the theorem below
means the integral of $\langle \phi ,\chi \rangle $ over $\Omega .$ For $%
N_{0}(\slashed{D})$, we refer to Definition \ref{D-6-1}. Let $H^{1}_{FS}(\Omega
,\slashed{S})$ denote the first-order Folland-Stein space. Denote by $L^{2}F$ the $L^2$-completion of a function space $F$. 

\begin{mythm}
\label{T-1-4} Let $\Omega $ be a bounded domain with boundary $\Sigma $ in a
s.p.c pseudohermitian manifold. Suppose that there exists an $S^{1}$ CR
action on $\overline{\Omega }$ satisfying Assumption \ref{assum}. Then for
each $\phi \in L_{\leq N}^{2}(\Omega ,\slashed{S})$ and $\rho \in H^{1}_{FS}(\Omega
,\slashed{S})\cap L^{2}\Gamma _{\leq N}(\overline{\Omega },\slashed{S})$ (see (\ref{6-7}) for $%
\Gamma _{\leq N}(\overline{\Omega },\slashed{S})$) satisfying the integrability condition 
\begin{equation}
\llangle\phi ,\chi \rrangle+\oint\limits_{\Sigma }\langle c(e^{2n})\rho
,\chi \rangle =0\text{ }\text{ }\forall \chi \in N_{0}(\slashed{D}),
\end{equation}%
the boundary value problem (\ref{1-5}) has a solution, which is unique up to 
$\Tilde{\chi}\in N_{0}(\slashed{D})$.
\end{mythm}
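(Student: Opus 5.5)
We follow the Riemannian model of Hijazi--Montiel--Roldán: first reduce (\ref{1-5}) to a homogeneous boundary problem, then solve that by a Fredholm/Hilbert-space argument whose coercivity comes from the Reilly inequality (\ref{1-2a}). The $S^{1}$-action and the truncation to Fourier modes $\leq N$ are there to control the bad term $-2\re\langle \Lambda\nabla_{T}\psi,\psi\rangle$, which involves the characteristic direction $T$ that the Folland--Stein norm does not see.

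\textbf{Reduction.} Set $\psi=\rho+\eta$. Then (\ref{1-5}) becomes
\begin{equation*}
\slashed{D}\eta=\phi-\slashed{D}\rho \text{ on } \Omega, \qquad \pi_{+}\eta=0 \text{ on } \Sigma .
\end{equation*}
Here $\slashed{D}\rho\in L^{2}$, because $\rho\in H^{1}_{FS}$ and $\slashed{D}$ involves only horizontal derivatives. The integrability condition transforms correctly by Green's formula for $\slashed{D}$: for $\chi\in N_{0}(\slashed{D})$ (harmonic spinors with $\pi_{-}\chi=0$, or the dual condition from Definition \ref{D-6-1}) we have
\begin{equation*}
\llangle \slashed{D}\rho,\chi\rrangle-\llangle \rho,\slashed{D}\chi\rrangle=\pm\oint_{\Sigma}\langle c(e^{2n})\rho,\chi\rangle .
\end{equation*}
The hypothesis then says precisely that $\phi-\slashed{D}\rho$ is $L^{2}$-orthogonal to $N_{0}(\slashed{D})$. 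So it suffices to show that the operator
\begin{equation*}
\slashed{D}:\{\eta\in H^{1}_{FS}\cap L^{2}\Gamma_{\leq N} : \pi_{+}\eta=0\}\to L^{2}_{\leq N}
\end{equation*}
has closed range equal to $N_{0}(\slashed{D})^{\perp}$, the orthogonal complement of its cokernel.

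\textbf{Coercivity.} Apply (\ref{1-2a}) to $\eta$ with $\pi_{+}\eta=0$. By Definition \ref{D-6-z}, $\pi_{+}$ is the spectral projection of $\Tilde{\slashed{D}}^{\Sigma}$ onto the nonnegative (APS-type) part. Hence $\pi_{+}\eta=0$ forces $\oint\langle\Tilde{\slashed{D}}^{\Sigma}\eta,\eta\rangle\leq 0$, and the boundary terms are bounded above by $C\|\eta\|^{2}_{L^{2}(\Sigma)}$ (the $H$ term). The $T$-term is handled by the $S^{1}$-action, whose generator is transverse with $T$ expressible through it plus horizontal fields (Assumption \ref{assum}). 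On modes $\leq N$, $\nabla_{T}\eta$ is then bounded by $N\|\eta\|$ plus horizontal derivatives, so $|2\re\langle\Lambda\nabla_{T}\eta,\eta\rangle|\leq \epsilon|\nabla_{H}\eta|^{2}+C_{N,\epsilon}|\eta|^{2}$. Feeding this into the formula (\ref{1-1a}), which retains $|\nabla\eta|^{2}$, gives a Gårding inequality
\begin{equation*}
\|\eta\|_{H^{1}_{FS}}^{2}\leq C\big(\|\slashed{D}\eta\|^{2}+\|\eta\|_{L^{2}(\Omega)}^{2}+\|\eta\|^{2}_{L^{2}(\Sigma)}\big).
\end{equation*}
The boundary $L^{2}$ term is absorbed by a trace inequality with small constant.

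\textbf{Conclusion.} The inclusion $H^{1}_{FS}\hookrightarrow L^{2}$ on bounded $\Omega$ is compact (Folland--Stein/Rellich), and the truncation to finitely many $S^{1}$-modes makes the $T$-direction harmless. So the Gårding estimate implies that $\slashed{D}$ with this boundary condition has finite-dimensional kernel and closed range. The kernel is $N_{0}(\slashed{D})$. To identify the cokernel, we check that the formal adjoint problem is $\slashed{D}$ with the complementary condition, and use symmetry of the pairing $\oint\langle c(e^{2n})\cdot,\cdot\rangle$ between $\mathrm{im}\,\pi_{+}$ and $\mathrm{im}\,\pi_{-}$, which comes from $c(e^{2n})$ anticommuting with $\Tilde{\slashed{D}}^{\Sigma}$. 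This yields the range characterization, hence existence, with uniqueness modulo $N_{0}(\slashed{D})$.

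\textbf{Main obstacle.} The hardest step is the coercivity. First, the sign of the boundary term under the APS condition must come out right for the modified operator $\Tilde{\slashed{D}}^{\Sigma}$ despite singular (characteristic) points of $\Sigma$; Remark \ref{R-4-1} addresses these. Second, the $\nabla_{T}$ term must be controlled uniformly; it is here that the restriction to modes $\leq N$ is essential. The first approach to try is expanding $\eta$ in $S^{1}$-Fourier modes and treating each mode separately.
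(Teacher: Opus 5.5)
Your proposal is correct and follows essentially the paper's route: reduce to $\slashed{D}\eta=\phi-\slashed{D}\rho$ with $\pi_{+}\eta=0$; obtain the subelliptic estimate of Lemma \ref{ellipticest} from the CR Reilly formula, using the sign of the boundary term under the APS condition, the trace theorem for the $H$-term, and the $S^{1}$-action to control $\Lambda\nabla_{T}$ on modes $\leq N$; then conclude from the decomposition $L^{2}\Gamma_{\leq N}(\Omega,\slashed{S})=N_{0}(\slashed{D})\oplus\im(\slashed{D})$ of Theorem \ref{spectrumthm}. One point to state cleanly: $c(e^{2n})$ anticommutes with $\Tilde{\slashed{D}}^{\Sigma}$ and so swaps the ranges of $\pi_{+}$ and $\pi_{-}$ (with no zero eigenvalue, as the paper arranges), hence the adjoint boundary condition is the same condition $\pi_{+}\zeta=0$, not a complementary one, so $\slashed{D}$ is self-adjoint on $D_{-,N}(\slashed{D})$ and its cokernel is exactly $N_{0}(\slashed{D})$, the harmonic spinors with $\pi_{+}\chi=0$.
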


The paper is organized as follows. In Section \ref{Sec2}, we provide some
basic material in pseudohermitian geometry and spin geometry on the contact
bundle. In Section \ref{Sec3}, we give a spinorial theory on hypersurfaces
in a CR manifold. Among others, we define a modified hypersurface Dirac
operator $\Tilde{\slashed{D}}^{\Sigma }$ and show that $\Tilde{\slashed{D}}%
^{\Sigma }$ is self-adjoint. We then make use of this operator to establish
a Reilly-type formula/inequality for a domain with boundary in a CR manifold
in Section \ref{Sec4}; Theorem \ref{thm1} then follows. In Section \ref{Sec5}%
, we apply the Reilly-type formula in Theorem \ref{thm1} to an
asymptotically flat pseudohermitian manifold with a bounded boundary $\Sigma 
$ to obtain a p-mass formula in Theorem \ref{pmassthm}. We then have (\ref%
{massineqn}) in Corollary \ref{pmasscor}: that is, the lower bound estimate
of the p-mass by the first positive eigenvalue of $\Tilde{\slashed{D}}^{\Sigma
} $ under the condition that $W\geq 0,$ $\Sigma $ is $p$-minimal and the
spinor $\psi $ satisfies the Dirac equation $\slashed{D}\psi =0.$ In Section %
\ref{Sec6}, we solve the CR Dirac equation for a domain with boundary under
an $S^{1}$-action assumption.

\bigskip

\textbf{Acknowledgement. }J.-H. Cheng (resp. H.-L. Chiu) would like to thank
the National Science and Technology Council of Taiwan for the support: grant
no. 112-2115-M-001-012 (resp. grant no. 112-2115-M-007-009-MY3).

\bigskip

\section{Preliminaries and Clifford Actions\label{Sec2}}

Let $(M^{2n+1},J,\theta )$ be a strongly pseudoconvex (spc) pseudohermitian
manifold of dimension $2n+1$ (\cite{Web78}, \cite{Lee86}, \cite{Dra06}). Let 
$\xi =\ker \theta $ denote the contact distribution of $M$. Since $M$ is
spc, we have the Levi metric 
\begin{equation*}
\langle U,V\rangle :=L_{\theta }(U,V)=\frac{1}{2}d\theta (U,JV)\text{ for }%
U,V\in \xi ,
\end{equation*}%
and the volume form 
\begin{equation}
dV:=d\Vol_{\theta }=c_{n}\theta \wedge (d\theta )^{n}.  \label{dv}
\end{equation}%
where $c_{n}=\frac{1}{2^{n}}$. Let $T^{1,0}M$ be the $i$-eigenspace of $J$
on $\mathbb{C}\xi $, and $T^{0,1}M=\overline{T^{1,0}M}$. By \cite{Web78}, we
have the unique Tanaka-Webster connection on $M$. It is not difficult to
write down the real version of Tanaka-Webster structure equations (The
details can be seen in the Appendix in \cite{kao2025thesis}) as follows: 
\begin{equation}
\left\{ 
\begin{array}{rll}
d\theta & =\overline{\delta }_{ij}\,e^{i}\wedge e^{j}, &  \\ 
de^{i} & =e^{j}\wedge \omega _{j}{}^{i}+A^{i}{}_{j}\,\theta \wedge e^{j}, & 
\\ 
0 & =\omega _{i}{}^{j}+\omega _{j}{}^{i}, &  \\ 
0 & =A_{ij}-A_{ji}, &  \\ 
-\omega _{i}^{\text{ }\text{ }j} & =\bar{\delta}_{i}{}^{k}\omega _{k}{}^{l}%
\bar{\delta}_{l}{}^{j}, &  \\ 
A^{i}{}_{j} & =\bar{\delta}_{k}{}^{i}A^{k}{}_{l}\bar{\delta}_{j}{}^{l}. & 
\end{array}%
\right.  \label{realstreqn}
\end{equation}%
where $\xi =\langle e_{1},e_{2},...,e_{2n}\rangle $ with $Je_{\beta
}=e_{\beta +n}$ for $\beta =1,...,n$, the coframe $\{e^{i}\}$ is dual to $%
\{e_{i}\}$, the Tanaka-Webster connection satisfies $\nabla e_{i}=\omega
_{i}^{\text{ }\text{ }j}e_{j}$, the torsion is given by $\langle {\csname up%
\endcsname Tor}(T,e_{i}),e_{j}\rangle =:A_{ij},$ where $T$ denotes the
(Reeb) vector field such that 
\begin{equation}
\theta (T)=1,d\theta (T,\cdot )=0,  \label{Rvf}
\end{equation}%
and $\overline{\delta }_{ij}:=-\langle e_{i},Je_{j}\rangle $. In this
viewpoint, the frame $\{e_{i}\}$ is orthonormal with respect to the Levi
metric. Thus, 
\begin{equation}
\lbrack e_{i},e_{j}]=-2\overline{\delta }_{ij}T+(\omega _{j}^{\text{ }\text{ 
}k}(e_{i})-\omega _{i}^{\text{ }\text{ }k}(e_{j}))e_{k}.  \label{eiej}
\end{equation}%
Here the indices $\beta ,\gamma ,...$ run from $1$ to $n$, and the indices $%
i,j,k,...$ run from $1$ to $2n$.


\subsection{Spin structures on $\protect\xi $}

We refer the material in this subsection to \cite{chengchiu2022positivemass5}%
.

\begin{mylem}
Suppose $(M, \xi)$ is an orientable contact manifold. Then the contact
bundle $\xi$ is spin if and only if the tangent bundle $TM$ is spin.
\end{mylem}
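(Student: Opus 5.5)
Choose a contact form $\theta$ with $\xi=\ker\theta$; for a cooriented contact structure this exists globally. The plan is to split the tangent bundle as
\begin{equation*}
TM \cong \xi \oplus \underline{\mathbb{R}},
\end{equation*}
where the trivial line bundle $\underline{\mathbb{R}}$ is spanned by the Reeb vector field $T$ of (\ref{Rvf}). The isomorphism is $v\mapsto (v-\theta(v)T,\ \theta(v))$. Since $TM$ is orientable and the line bundle spanned by $T$ is trivial, $\xi$ is orientable, so both Stiefel--Whitney classes $w_1$ make sense. Both conditions in the lemma then translate into the vanishing of characteristic classes. Spin-ness of an oriented real vector bundle $E$ is equivalent to $w_1(E)=0$ and $w_2(E)=0$. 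Here orientability of $\xi$ and of $M$ match, because the trivial summand carries a canonical orientation given by $T$.

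The key step is the Whitney sum formula: $w(TM)=w(\xi)\,w(\underline{\mathbb{R}})=w(\xi)$, since a trivial bundle has total Stiefel--Whitney class $1$. Consequently $w_1(TM)=w_1(\xi)$ and $w_2(TM)=w_2(\xi)$, and the equivalence follows at once.

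For a more structural argument that avoids characteristic classes, one can work with structure groups. The inclusion $SO(2n)\hookrightarrow SO(2n+1)$ lifts to $Spin(2n)\hookrightarrow Spin(2n+1)$ and is compatible with the double covers. A spin structure $P_{Spin}(\xi)$ therefore induces one on $TM$ by
\begin{equation*}
P_{Spin}(\xi)\times_{Spin(2n)}Spin(2n+1).
\end{equation*}
Conversely, the frame bundle of $TM$ reduces to $SO(2n)$ via frames of the form $(e_1,\dots,e_{2n},T)$. The preimage of this reduction in $P_{Spin}(TM)$ is then a $Spin(2n)$-bundle double covering $P_{SO}(\xi)$, which gives a spin structure on $\xi$.

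The only point needing care is the coorientability of $\xi$, i.e.\ the existence of a global $\theta$. I take this to be implicit in the setting of the paper: $\theta$ is given as part of the pseudohermitian structure, and orientability of $M$ then forces orientability of $\xi$, since $\theta\wedge(d\theta)^n\neq 0$. If $\xi$ were not coorientable, one would instead write $TM\cong \xi\oplus L$ with $L=TM/\xi$ a real line bundle, and the Whitney formula would no longer give the equivalence directly. So I would state explicitly that the global contact form is assumed, and the rest is routine.
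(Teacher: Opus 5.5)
Your proof is correct. The paper gives no argument of its own here and simply defers to Lemma 2.1 of \cite{chengchiu2022positivemass5}; your splitting $TM\cong\xi\oplus\mathbb{R}T$ via the Reeb field followed by the Whitney sum formula (or, equivalently, your $Spin(2n)\subset Spin(2n+1)$ reduction) is the standard proof of that lemma.

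Your coorientability caveat is well placed and not merely cosmetic. For $n$ even, an orientable $M$ forces $\xi$ to be coorientable, since $(f\theta)\wedge(d(f\theta))^n=f^{n+1}\theta\wedge(d\theta)^n$ with $n+1$ odd; this covers the five-dimensional case used later. For $n$ odd, however, the statement genuinely needs a global contact form. For example, the non-coorientable contact structure $\ker\bigl(\cos(z/2)\,dx+\sin(z/2)\,dy\bigr)$ on $T^3$ has $w_1(\xi)\neq 0$, so $\xi$ is not spin, while $T^3$ is spin.
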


\begin{proof}
See Lemma 2.1 in \cite{chengchiu2022positivemass5}.
\end{proof}

\subsection{Spinor bundles}

Take a type (1,0) coframe $\{\theta ^{1},...,\theta ^{n}\}$ in $T^{\ast
1,0}M $, where 
\begin{equation}
\theta ^{\beta }=e^{\beta }+\sqrt{-1}e^{\beta +n},  \label{cpxtheta}
\end{equation}%
and its dual frame 
\begin{equation}
Z_{\beta }=\frac{1}{2}(e_{\beta }-\sqrt{-1}e_{\beta +n}).  \label{cpxe}
\end{equation}%
Let the conjugate $\{\theta ^{\bar{1}},...,\theta ^{\bar{n}}\}$ be the type
(0,1) coframe in $T^{\ast 0,1}M$. The spinor bundle is defined as 
\begin{equation*}
\slashed{S}=\Lambda ^{0,\ast }T^{\ast 0,1}M=\langle f\theta ^{\overline{i_{1}}%
}\wedge ...\wedge \theta ^{\overline{i_{k}}}\mid i_{j}\in
\{1,...,n\},\,k=0,..,n,\,f\in C^{\infty }(M)\rangle .
\end{equation*}%
The complex Clifford action $c:\Cl(\mathbb{C}T^{\!\ast }M)\;\longrightarrow
\;\mathrm{End}(\slashed{S})$ is defined by 
\begin{equation*}
c(\theta ^{\overline{\beta }})\;=\;\sqrt{2}\,\varepsilon _{\beta },\qquad
c(\theta ^{\beta })\;=\;-\sqrt{2}\,\iota _{\beta },
\end{equation*}%
where $\varepsilon _{\beta }\psi =\theta ^{\overline{\beta }}\wedge \psi $
and $\iota _{\beta }\psi =\iota _{Z_{\overline{\beta }}}\psi $.

By (\ref{cpxtheta}), the corresponding real Clifford action $c: \Cl%
(T^{\!*}M) \;\longrightarrow\; \mathrm{End}(\slashed{S})$ is given by 
\begin{equation*}
c(e^{\beta}) \;=\; \varepsilon_{\beta} - \iota_{\beta}, \qquad
c(e^{\beta+n}) \;=\; \sqrt{-1}\big(\varepsilon_{\beta} + \iota_{\beta}\big).
\end{equation*}

We can also define the even and odd spinor bundles by 
\begin{equation}
\slashed{S}^{even}:=\Lambda ^{0,even}T^{\ast 0,1}M,\quad
\slashed{S}^{odd}:=\Lambda ^{0,odd}T^{\ast 0,1}M.  \label{2-5a}
\end{equation}

\begin{mydef}
\label{spinconn} The spinor connection on the spinor bundle is defined by 
\begin{equation}
\nabla _{X}\psi =X\psi +\frac{1}{4}\omega _{ij}(X)c(e^{i})c(e^{j})\psi .
\label{spinconndef}
\end{equation}
\end{mydef}

\begin{myrmk}
\label{R-2-z} For simplicity, we use the same notation $\nabla $ for the
Tanaka-Webster connection acting on vector fields and the induced spinor
connection acting on spinors.
\end{myrmk}

\begin{mydef}
The CR/contact Dirac operator $\slashed{D}:\Gamma (\slashed{S})\rightarrow \Gamma
(\slashed{S})$ is defined by 
\begin{equation}
\slashed{D}\psi :=c(e^{i})\nabla _{e_{i}}\psi .  \label{Dirac}
\end{equation}
\end{mydef}

\begin{myrmk}
If we take a spin$^{c}$ struture, then 
\begin{equation*}
\slashed{D}^{c}:=c(e^{i})\nabla _{e_{i}}^{c}=c(\theta ^{\alpha })\nabla
_{Z_{\alpha }}^{c}+c(\theta ^{\bar{\alpha}})\nabla _{Z_{\bar{\alpha}}}^{c}=%
\sqrt{2}(\bar{\partial}_{b}+\bar{\partial}_{b}^{\ast })
\end{equation*}%
where $\nabla _{X}^{c}\psi =X\psi +\frac{1}{4}\omega
_{ij}(X)c(e^{i})c(e^{j})\psi +\frac{1}{2}\Gamma ^{det}(X)\psi $ and $\Gamma
^{det}=\omega _{\bar{\beta}}{}^{\bar{\beta}}$ is the connection on the
canonical line bundle $\Lambda ^{n}T^{\ast (0,1)}M$. See (4.5) in \cite%
{cheng2019heatkernel}.
\end{myrmk}

The following Weitzenb\"{o}ck-type formula is given in \cite%
{chengchiu2022positivemass5}.

\begin{mylem}
\label{Weitzen} (\cite{chengchiu2022positivemass5})%
\begin{equation}
\slashed{D}^{2}=\nabla^{\ast }\nabla+W-2\sum\limits_{\beta }c(e^{\beta
})c(e^{\beta +n})\nabla _{T}.  \label{2-4a}
\end{equation}
\end{mylem}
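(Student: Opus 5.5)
Although the paper cites \cite{chengchiu2022positivemass5} for this formula, I would re-derive it directly. The plan is to compute $\slashed{D}^{2}$ pointwise in a frame $\{e_i\}$ that is adapted at a chosen point $p$, meaning $\omega_i{}^j(p)=0$. Since $\slashed{D}$ is independent of the orthonormal frame, this is harmless. The first step is to check that the spinor connection (\ref{spinconndef}) is compatible with the Clifford action: $\nabla_X(c(e^i)\psi)=c(\nabla_X e^i)\psi+c(e^i)\nabla_X\psi$. This holds because $\frac14\omega_{ij}c(e^i)c(e^j)$ implements the infinitesimal rotation $\omega$ on Clifford elements, using the skew-symmetry $\omega_i{}^j+\omega_j{}^i=0$ from (\ref{realstreqn}). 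Then at $p$,
\[
\slashed{D}^2\psi=\sum_{i,j}c(e^i)c(e^j)\nabla_{e_i}\nabla_{e_j}\psi
=-\sum_i\nabla_{e_i}\nabla_{e_i}\psi+\tfrac12\sum_{i\neq j}c(e^i)c(e^j)\big(\nabla_{e_i}\nabla_{e_j}-\nabla_{e_j}\nabla_{e_i}\big)\psi .
\]

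The first term is $\nabla^*\nabla\psi$ at $p$. Here $\nabla^*$ is the formal adjoint with respect to $dV$ over horizontal directions, $\nabla^*\nabla=-\sum_i(\nabla_{e_i}\nabla_{e_i}-\nabla_{\nabla_{e_i}e_i})$. Its divergence-theorem justification uses that the Tanaka–Webster connection preserves $\theta$ and $d\theta$, so that $\operatorname{div}X=\sum_i\langle\nabla_{e_i}X,e_i\rangle$ for horizontal $X$.

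For the commutator term I use (\ref{eiej}) at $p$, where $[e_i,e_j]=-2\bar\delta_{ij}T$. Hence
\[
\nabla_{e_i}\nabla_{e_j}-\nabla_{e_j}\nabla_{e_i}=R^{S}(e_i,e_j)-2\bar\delta_{ij}\nabla_T .
\]
The $T$-part gives $-\sum_{i,j}\bar\delta_{ij}c(e^i)c(e^j)\nabla_T$. Since $\bar\delta_{\beta,\beta+n}=-\langle e_\beta,Je_{\beta+n}\rangle=1$ and $\bar\delta_{\beta+n,\beta}=-1$, this equals $-2\sum_\beta c(e^\beta)c(e^{\beta+n})\nabla_T$. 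This is exactly the last term of (\ref{2-4a}).

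The curvature part is $\frac12\sum_{i,j}c(e^i)c(e^j)R^S(e_i,e_j)$, with $R^S(X,Y)=\frac14 R_{klXY}c(e^k)c(e^l)$. The remaining task is to show that $\frac18\sum R_{klij}c(e^i)c(e^j)c(e^k)c(e^l)$ equals $W$, with the paper's normalization of $W$. This step is the main obstacle. Unlike the Levi-Civita case, the Tanaka–Webster curvature on $\xi$ lacks the full first Bianchi identity. Its Bianchi identity contains torsion terms $A_{ij}$ coming from the $\theta\wedge e^j$ terms of $de^i$ in (\ref{realstreqn}), together with the $d\theta$-term.

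My first attempt would be to pass to complex frames $Z_\beta$ and write the curvature as $R_{\beta\bar\gamma\rho\bar\sigma}\theta^\rho\wedge\theta^{\bar\sigma}+(\text{torsion})\,\theta\wedge(\cdot)+(\text{torsion})\,\theta^\rho\wedge\theta^\sigma+\dots$. Evaluating on pairs $(e_i,e_j)$ in $\xi$ kills the $\theta$-terms. The $(2,0)$ and $(0,2)$ parts are derivatives of torsion, $A_{\beta\gamma,\bar\sigma}$-type terms. One must check that they cancel in the Clifford contraction because of the symmetries $A_{ij}=A_{ji}$ and $J$-anti-invariance in (\ref{realstreqn}). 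For the $(1,1)$ part, the Kähler-type symmetry $R_{\beta\bar\gamma\rho\bar\sigma}=R_{\rho\bar\gamma\beta\bar\sigma}$ holds for Tanaka–Webster. Using it, the usual Lichnerowicz computation then reduces the fourfold Clifford product to $-\frac14\sum R_{ijij}$, which is a positive multiple of $R=R_{\beta\bar\beta}$. I would fix the constant so that it matches the normalization of $W$ in \cite{chengchiu2022positivemass5}.

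The $\Gamma^{det}$ spin$^c$ twist does not appear, since we use the honest spin connection. Finally I would check $c(e^i)^2=-1$ from $c(e^\beta)=\varepsilon_\beta-\iota_\beta$ to confirm the sign conventions throughout.
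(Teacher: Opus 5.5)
Your setup is the standard Lichnerowicz-type derivation; the paper gives no argument here and only cites \cite{chengchiu2022positivemass5}. The steps you actually carry out are correct: the unitary frame with $\omega_i{}^j(p)=0$, the identification of $-\sum_i\nabla_{e_i}\nabla_{e_i}$ with $\nabla^*\nabla$ at $p$, and the $\nabla_T$-term, whose sign you get right from $\bar\delta_{\beta,\beta+n}=1$.

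The gap is the one step with real content: showing that $\frac18\sum R_{jklm}\,c(e^j)c(e^k)c(e^l)c(e^m)$, with $R_{jklm}=\langle R(e_j,e_k)e_l,e_m\rangle$, is the scalar $W$. You leave this as ``one must check'', and your description of what must cancel is wrong. On $\xi\times\xi$ the $(2,0)+(0,2)$ part of the Tanaka--Webster curvature form contains no derivatives of torsion. In Lee's normalization it is $i\theta_\beta\wedge\tau^\alpha-i\tau_\beta\wedge\theta^\alpha$, with $\tau^\alpha=A^\alpha{}_{\bar\gamma}\theta^{\bar\gamma}$ and $\theta_\beta=h_{\beta\bar\gamma}\theta^{\bar\gamma}$, so it is algebraic in $A$. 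The covariant derivatives of $A$ only multiply $\theta^\rho\wedge\theta$ and $\theta^{\bar\rho}\wedge\theta$, so they already vanish on horizontal pairs. These algebraic terms are exactly the Bianchi defect you correctly located one paragraph earlier. They satisfy no Bianchi identity on their own, so ``they cancel by the symmetries of $A$'' is precisely what has to be proved. That must be done both for the part of the contraction with three distinct indices and for the Ricci-type part.

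To close it, stay in the real frame. Take $\Omega_j{}^i=d\omega_j{}^i-\omega_j{}^k\wedge\omega_k{}^i$. The purely horizontal component of $d(de^i)=0$, computed from (\ref{realstreqn}), is the defective first Bianchi identity
\[
e^j\wedge\Omega_j{}^i\big|_{\xi}=A^i{}_j\,d\theta\wedge e^j ,
\]
i.e.\ $R_{jklm}+R_{kljm}+R_{ljkm}=2(\bar\delta_{jk}A_{ml}+\bar\delta_{kl}A_{mj}+\bar\delta_{lj}A_{mk})$.

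Now run Lichnerowicz's reduction with this identity.
\begin{itemize}
\item In $\sum R_{jklm}\,c(e^j)c(e^k)c(e^l)c(e^m)$, the part with $j,k,l$ distinct becomes $2\sum_m\sum_{j,k,l\ \mathrm{distinct}}\bar\delta_{jk}A_{ml}\,c(e^j)c(e^k)c(e^l)c(e^m)$. This equals $-4\Lambda\,\mathrm{tr}A-4\sum_{k,m}(A\bar\delta)_{mk}\,c(e^k)c(e^m)$. It vanishes because $A$ is symmetric and $J$-anti-invariant: $\mathrm{tr}A=0$ and $A\bar\delta+\bar\delta A=0$, so $A\bar\delta$ is symmetric and trace-free.
\item Tracing the same identity, the antisymmetric part of the horizontal Ricci contraction $\sum_kR_{jkkm}$ is a combination of $\bar\delta A+A\bar\delta$ and $\bar\delta\,\mathrm{tr}A$, hence zero. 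So no $2$-form Clifford term survives, and the curvature term equals $\frac14\sum_{j,k}\langle R(e_j,e_k)e_k,e_j\rangle$.
\item The torsion part drops out of this trace. It is of type $(2,0)+(0,2)$ in the form slot but $\mathfrak{u}(n)$-valued, hence of type $(1,1)$, in the endomorphism slot, and the trace pairs the two slots.
\item The remaining $(1,1)$ part is, by the Kähler-type symmetry you quote, a positive multiple of the Tanaka--Webster scalar curvature, i.e.\ $W$ in the normalization of \cite{chengchiu2022positivemass5}.
\end{itemize}
With this the argument is complete, and no complex-frame computation is needed.
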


\begin{mypro}
\begin{equation}  \label{hardT}
\sum\limits_{\beta} c(e^{\beta})c(e^{\beta+n}) =\sum\limits_{\beta} \sqrt{-1}%
(\varepsilon_{\beta}\iota_{\beta}-\iota_{\beta}\varepsilon_{\beta}).
\end{equation}
and on $\Lambda^{k}T^{*(0,1)}M \subset \slashed{S}$, 
\begin{equation}  \label{hardT2}
\sum\limits_{\beta} c(e^{\beta})c(e^{\beta+n}) =\sqrt{-1} (2k-n)
I_{\Lambda^{k}}.
\end{equation}
\end{mypro}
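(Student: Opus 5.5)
Both identities reduce to a purely algebraic computation with the real Clifford action. No connection or curvature enters. I will substitute the definitions $c(e^{\beta})=\varepsilon_{\beta}-\iota_{\beta}$ and $c(e^{\beta+n})=\sqrt{-1}(\varepsilon_{\beta}+\iota_{\beta})$, expand the product for each fixed $\beta$, and then use the exterior algebra relations on $\Lambda^{0,*}$. The relations needed are $\varepsilon_{\beta}^2=0$, $\iota_{\beta}^2=0$, and the anticommutator $\varepsilon_{\beta}\iota_{\beta}+\iota_{\beta}\varepsilon_{\beta}=I$. The last one holds because $\iota_{\beta}$ is contraction with $Z_{\bar\beta}$ and $\theta^{\bar\beta}(Z_{\bar\beta})=1$ by the duality of the coframe (\ref{cpxtheta}) and the frame (\ref{cpxe}).

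For (\ref{hardT}), fix $\beta$ and expand:
\begin{equation*}
c(e^{\beta})c(e^{\beta+n})=\sqrt{-1}(\varepsilon_{\beta}-\iota_{\beta})(\varepsilon_{\beta}+\iota_{\beta})
=\sqrt{-1}\big(\varepsilon_{\beta}^2+\varepsilon_{\beta}\iota_{\beta}-\iota_{\beta}\varepsilon_{\beta}-\iota_{\beta}^2\big).
\end{equation*}
Since the squares vanish, this equals $\sqrt{-1}(\varepsilon_{\beta}\iota_{\beta}-\iota_{\beta}\varepsilon_{\beta})$. Summing over $\beta$ gives (\ref{hardT}).

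For (\ref{hardT2}), I will use the anticommutator to rewrite $\varepsilon_{\beta}\iota_{\beta}-\iota_{\beta}\varepsilon_{\beta}=2\varepsilon_{\beta}\iota_{\beta}-I$. It then suffices to show that $\sum_{\beta}\varepsilon_{\beta}\iota_{\beta}$ is the number operator, i.e. it acts by $k$ on $\Lambda^{k}T^{*(0,1)}M$. I will check this on the basis forms $\theta^{\bar i_1}\wedge\cdots\wedge\theta^{\bar i_k}$ and extend by $C^\infty(M)$-linearity, since all operators involved are tensorial.
\begin{itemize}
\item If $\beta\in\{i_1,\dots,i_k\}$, then $\iota_{\beta}$ removes $\theta^{\bar\beta}$ with a sign, and $\varepsilon_{\beta}$ puts it back in front with the same sign. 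So $\varepsilon_{\beta}\iota_{\beta}$ acts as the identity on this form.
\item If $\beta\notin\{i_1,\dots,i_k\}$, then $\iota_{\beta}$ kills the form.
\end{itemize}
Summing over $\beta$ therefore gives the factor $k$, so
\begin{equation*}
\sum_\beta c(e^\beta)c(e^{\beta+n})=\sqrt{-1}(2k-n)\,I_{\Lambda^k}.
\end{equation*}

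The only step needing care is the sign bookkeeping in $\varepsilon_{\beta}\iota_{\beta}$ on a basis element. Moving $\theta^{\bar\beta}$ to the front for contraction and then wedging it back in front produce the same sign $(-1)^{m}$, where $m$ is its position. The two signs cancel, so $\varepsilon_{\beta}\iota_{\beta}$ is indeed the projection onto forms containing $\theta^{\bar\beta}$.
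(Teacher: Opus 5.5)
Your proof is correct and follows essentially the same route as the paper: expand $(\varepsilon_{\beta}-\iota_{\beta})\sqrt{-1}(\varepsilon_{\beta}+\iota_{\beta})$ using $\varepsilon_{\beta}^{2}=\iota_{\beta}^{2}=0$, then count, on a basis $(0,k)$-form, how many indices $\beta$ have $\theta^{\bar\beta}$ present. Your rewriting $\varepsilon_{\beta}\iota_{\beta}-\iota_{\beta}\varepsilon_{\beta}=2\varepsilon_{\beta}\iota_{\beta}-I$ (reducing to the number operator) repackages the paper's direct observation that this operator acts by $+1$ or $-1$ depending on whether $\theta^{\bar\beta}$ appears; the paper writes $\pm\sqrt{-1}$ at that step, folding in the prefactor, but its final count $\sqrt{-1}\,(k-(n-k))$ agrees with yours.
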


\begin{proof}
\begin{align*}
\sum\limits_{\beta} c(e^{\beta})c(e^{\beta+n}) &= \sum\limits_{\beta}
(\varepsilon_{\beta}-\iota_{\beta})\sqrt{-1}(\varepsilon_{\beta}+\iota_{%
\beta}) \\
&=\sum\limits_{\beta} \sqrt{-1}(\varepsilon_{\beta}\iota_{\beta}-\iota_{%
\beta}\varepsilon_{\beta}).
\end{align*}
For the operator $(\varepsilon_{\beta}\iota_{\beta}-\iota_{\beta}%
\varepsilon_{\beta})$, note that it acts on the form $f\theta^{\overline{%
i_{1}}} \wedge...\wedge \theta^{\overline{i_{k}}}$ by multiplication with $%
\sqrt{-1}$ if the form contains $\theta^{\overline{\beta}}$, and by $-\sqrt{%
-1}$ otherwise. Therefore, the operator in (\ref{hardT}) acts on a $(0,k)$%
-form by multiplication with $\sqrt{-1} k-\sqrt{-1}(n-k)=\sqrt{-1}(2k-n)$.
\end{proof}

Hence, we obtain the following property.

\begin{mycor}
For $n=2$, we have, on $\Gamma (\slashed{S}^{odd})$ 
\begin{equation}
\Lambda :=\sum\limits_{\beta =1}^{n}c(e^{\beta })c(e^{\beta +n})=0
\label{2-7-1}
\end{equation}
\end{mycor}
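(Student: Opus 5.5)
The corollary follows directly from the preceding proposition, specifically from (\ref{hardT2}), which gives $\sum_{\beta} c(e^{\beta})c(e^{\beta+n})=\sqrt{-1}(2k-n)I_{\Lambda^{k}}$ on each homogeneous summand $\Lambda^{k}T^{*(0,1)}M\subset\slashed{S}$. So $\Lambda$ acts by a scalar on each form degree, and I only need to see which degrees occur in $\slashed{S}^{odd}$ when $n=2$.

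For $n=2$ the definition (\ref{2-5a}) gives $\slashed{S}^{odd}=\Lambda^{0,odd}T^{*0,1}M=\Lambda^{1}T^{*(0,1)}M$, since the form degree runs over $0\le k\le n=2$ and the only odd value is $k=1$. On this summand (\ref{hardT2}) gives the scalar $\sqrt{-1}(2\cdot 1-2)=0$. Because $\Lambda$ is a pointwise bundle endomorphism, it follows that $\Lambda\psi=0$ for every $\psi\in\Gamma(\slashed{S}^{odd})$, which is (\ref{2-7-1}).

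The only step needing any care is making sure (\ref{hardT2}) applies to an arbitrary odd spinor rather than just to monomials $f\theta^{\bar i_1}\wedge\cdots\wedge\theta^{\bar i_k}$. This holds because $\Lambda$ is $C^\infty(M)$-linear, being a composition of the Clifford multiplications $\varepsilon_\beta,\iota_\beta$, and $\slashed{S}^{odd}$ is spanned locally by $f\theta^{\bar 1}$ and $f\theta^{\bar 2}$. There is no real obstacle here; the statement is an immediate specialization of (\ref{hardT2}).
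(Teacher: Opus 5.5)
Your proposal is correct and is the paper's own argument: for $n=2$ one has $\slashed{S}^{odd}=\Lambda^{0,1}T^{\ast 0,1}M$, and by (\ref{hardT2}) $\Lambda$ acts there as the scalar $\sqrt{-1}(2\cdot 1-2)=0$. Your added remark on $C^{\infty}(M)$-linearity is a harmless clarification that the paper leaves implicit.
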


\begin{proof}
For $n=2$, note that 
\begin{equation*}
\slashed{S}^{even}:=\Lambda ^{0,even}T^{\ast 0,1}M=\Lambda ^{0,0}T^{\ast
0,1}M\oplus \Lambda ^{0,2}T^{\ast 0,1}M,\quad \slashed{S}^{odd}:=\Lambda
^{0,1}T^{\ast 0,1}M.
\end{equation*}%
Thus, the operator in (\ref{hardT}) acting on $\Gamma (\slashed{S}^{odd})$
reduces to the action on $(0,1)$-forms. It acts as the multiplication by $%
\sqrt{-1}(2-2)=0$, i.e., the zero operator.
\end{proof}

\section{Hypersurface theory in Pseudohermitian manifolds\label{Sec3}}

Let $\Sigma $ be a hypersurface in $M$. Throughout this section, we work on
the nonsingular points of $\Sigma ,$ where $\xi $ is transversal to $T\Sigma
.$ Define 
\begin{equation*}
\xi _{1}:=\xi \cap T\Sigma ,\qquad \xi _{2}:=\xi _{1}\cap J\xi _{1}.
\end{equation*}%
By a dimension count, we obtain (as $\xi $ is transversal to $T\Sigma )$ 
\begin{equation*}
\dim \xi _{2}=2n-2,\qquad \dim \xi _{1}=2n-1.
\end{equation*}%
Locally, we take a local orthonormal frame $\{e_{1},e_{2},\ldots
,e_{n-1},e_{n+1},...,e_{2n-1}\}$ of $\xi _{2}$ with $e_{\beta +n}=Je_{\beta
} $ for $\beta =1,\ldots ,n-1$. Next, choose a unit vector $e_{n}$ lying in
the orthogonal complement of $\xi _{2}$ in $\xi _{1}$, define $%
e_{2n}:=Je_{n} $ as the CR normal vector of $\Sigma $ and $e_{n}$ as the
characteristic vector of $\Sigma $. Now there are two choices for $e_{2n}$
in view of the orientation. Now, for a bounded domain $\Omega$, we take an
inward unit normal vector $N$ on the boundary of $\Omega$ with respect to
the adapted metric $g= \theta^{2}+ L_{\theta}$. We choose $e_{n}$ such that $%
e_{2n}:=Je_{n}$ satisfies $\langle N,e_{2n} \rangle>0$. And we define the
area form $d\Sigma$ by $-e_{2n}\lrcorner dV$.

Since $\Sigma $ is nonsingular, there exists a function $\alpha $ on $M$
such that 
\begin{equation}
T+\alpha \,e_{2n}\in T\Sigma .  \label{dev}
\end{equation}
The vector $e_{0}:=T+\alpha e_{2n}$ and the function $\alpha $ are called,
respectively, the \emph{deviation vector} and the \emph{deviation function}
of $\Sigma $ in $M$.

\medskip

To describe the extrinsic geometry of $\Sigma$ in $M$ under the
pseudohermitian structure, we rewrite the Tanaka Webster structure equations
in real form. Define 
\begin{equation*}
Z_{\beta} := \frac{1}{2} (e_{\beta} - \sqrt{-1} e_{\beta+n}), \qquad
\theta^{\alpha} := e^{\beta} + \sqrt{-1} e^{\beta+n}.
\end{equation*}
In order to be compatible with the hypersurface $\Sigma$, we must modify the
coframe $\{\theta, e^{1}, \ldots, e^{2n}\}$. Since 
\begin{equation*}
T\Sigma = \langle e_{0}, e_{1}, \ldots, e_{2n-1} \rangle, \qquad N\Sigma =
\langle e_{2n} \rangle,
\end{equation*}
we choose the new coframe 
\begin{equation*}
T^{*}\Sigma = \langle \theta, e^{1}, \ldots, e^{2n-1} \rangle, \qquad
N^{*}\Sigma = \langle u^{2n} := e^{2n} - \alpha \theta \rangle.
\end{equation*}

\medskip

The original structure equations (\ref{realstreqn}) read as 
\begin{equation}  \label{str1}
\begin{cases}
\begin{aligned} d\theta \; &= \; \overline{\delta}_{ab}\, e^{a} \wedge e^{b}
, \\[6pt] de^{a} \; &= \; e^{b} \wedge \omega_{b}{}^{a} \;+\; \theta \wedge
\big(A^{a}{}_{b}\, e^{b} \big). \\[6pt] \end{aligned}%
\end{cases}%
\end{equation}

Let 
\begin{equation}
\begin{cases}
\begin{aligned} u^{0} \; &= \; \theta, \\[6pt] u^{i} \; &= \; e^{i}, \\[6pt]
u^{2n} \; &= \; e^{2n} -\alpha \theta. \\[6pt] \end{aligned}%
\end{cases}
\label{str2}
\end{equation}

Then 
\begin{eqnarray*}
du^{0} &=&d\theta \\
&=&\;\overline{\delta }_{ab}\,e^{a}\wedge e^{b} \\
&=&\;\overline{\delta }_{ij}\,e^{i}\wedge e^{j}+2e^{2n-1}\wedge
(u^{2n}+\alpha u^{0}) \\
&=&\;\overline{\delta }_{ij}\,u^{i}\wedge u^{j}-2\alpha u^{0}\wedge
u^{2n-1}-2u^{2n}\wedge u^{2n-1},
\end{eqnarray*}

\begin{align*}
du^{i}& =de^{i} \\
& =\;e^{b}\wedge \omega _{b}{}^{i}\;+\;\theta \wedge \big(A^{i}{}_{b}\,e^{b}%
\big) \\
& =e^{j}\wedge \Big({}^{\Sigma }\omega _{j}{}^{i}+\omega
_{j}{}^{i}(e_{2n})u^{2n}\Big)\;+\;e^{2n}\wedge \Big({}^{\Sigma }\omega
_{2n}{}^{i}+\omega _{2n}{}^{i}(e_{2n})u^{2n}\Big) \\
& \quad +\theta \wedge \big(A^{i}{}_{j}\,e^{j}+A^{i}{}_{2n}(u^{2n}+\alpha
u^{0})\big) \\
& =e^{j}\wedge {}^{\Sigma }\omega _{j}{}^{i}\;-u^{2n}\wedge \Big(\omega
_{j}{}^{i}(e_{2n})u^{j}\Big)\;+\;(u^{2n}+\alpha u^{0})\wedge \Big({}^{\Sigma
}\omega _{2n}{}^{i}+\omega _{2n}{}^{i}(e_{2n})u^{2n}\Big) \\
& \quad +u^{0}\wedge \big(A^{i}{}_{j}\,e^{j}\big)\;-u^{2n}\wedge
(A^{i}{}_{2n}u^{0}) \\
& =u^{j}\wedge {}^{\Sigma }\omega _{j}{}^{i}\;+u^{0}\wedge \big(%
A^{i}{}_{j}\,u^{j}+\alpha {}^{\Sigma }\omega _{2n}{}^{i}\big)\; \\
& \quad -u^{2n}\wedge \Big(\omega
_{j}{}^{i}(e_{2n})u^{j}+(A^{i}{}_{2n}+\alpha \omega
_{2n}{}^{i}(e_{2n}))u^{0}-{}^{\Sigma }\omega _{2n}{}^{i}\Big), \\
&
\end{align*}

and

\begin{align*}
du^{2n}& =d(e^{2n}-\alpha \theta ) \\
& =de^{2n}-\alpha d\theta -d\alpha \wedge \theta \\
& =u^{j}\wedge {}^{\Sigma }\omega _{j}{}^{2n}\;+u^{0}\wedge \big(%
A^{2n}{}_{j}\,u^{j}\big)\; \\
& \quad -u^{2n}\wedge \Big(\omega
_{j}{}^{2n}(e_{2n})u^{j}+(A^{2n}{}_{2n})u^{0}\Big) \\
& \quad -\alpha \Big(\,\overline{\delta }_{ij}\,u^{i}\wedge u^{j}-\alpha
u^{0}\wedge u^{2n-1}-u^{2n}\wedge u^{2n-1}\Big) \\
& \quad -(e_{a}\alpha )u^{a}\wedge u^{0} \\
& =u^{j}\wedge \Big({}^{\Sigma }\omega _{j}{}^{2n}-\alpha \overline{\delta }%
_{jk}u^{k}\Big)\;+u^{0}\wedge \Big(A^{2n}{}_{j}+(e_{j}\alpha )+\alpha
^{2}\delta _{j}^{2n-1}\Big)u^{j}\; \\
& \quad +u^{2n}\wedge \Big(\alpha u^{2n-1}-\omega
_{j}{}^{2n}(e_{2n})u^{j}-(A^{2n}{}_{2n}+e_{2n}\alpha )u^{0}\Big). \\
&
\end{align*}

Therefore, the structure equations take the form 
\begin{equation}
\begin{cases}
\begin{aligned} du^{0} \; &= \; \overline{\delta}_{ij}\, u^{i} \wedge u^{j}
\;-\; 2\alpha\, u^{0} \wedge u^{2n-1} \;-\; 2 u^{2n} \wedge u^{2n-1},
\\[6pt] du^{i} \; &= \; u^{j} \wedge {}^{\Sigma}\omega_{j}{}^{i} \;+\; u^{0}
\wedge \big(A^{i}{}_{j}\, u^{j} + \alpha\,
{}^{\Sigma}\omega_{2n}{}^{i}\big)\\ &\quad+\; u^{2n} \wedge \Big(
{}^{\Sigma}\omega_{2n}{}^{i} -\omega_{j}{}^{i}(e_{2n})u^{j}
-(A^{i}{}_{2n}+\alpha \omega_{2n}{}^{i}(e_{2n}))\, u^{0} \Big), \\[6pt]
du^{2n} \; &= \; u^{j} \wedge \Big( {}^{\Sigma}\omega_{j}{}^{2n} - \alpha\,
\overline{\delta}_{jk}\, u^{k} \Big) \;+\; u^{0} \wedge \Big(A^{2n}{}_{i} +
(e_{i}\alpha) + \alpha^{2}\, \delta^{2n-1}_{i}\Big)u^{i} \\[3pt] &\quad\;+\;
u^{2n} \wedge \Big( \alpha\,u^{2n-1} \;-\; \omega_{j}{}^{2n}(e_{2n}) \,
u^{j} \;-\; (A^{2n}{}_{2n} + (e_{2n} \alpha))\, u^{0} \Big). \end{aligned}%
\end{cases}
\label{hyperstreqn}
\end{equation}%
where 
\begin{equation}
^{\Sigma }\omega _{j^{\prime }}{}^{i}:=\omega _{j^{\prime }}{}^{i}-\omega
_{j^{\prime }}{}^{i}(e_{2n})u^{2n}.  \label{hcon}
\end{equation}

\medskip

Since $T\Sigma$ is integrable, we have $du^{2n} \wedge u^{2n} = 0$, which
yields the following integrability condition: 
\begin{equation}  \label{hyperintcondi}
\begin{cases}
\begin{aligned} & \omega_{j}{}^{2n}(e_{i}) \;-\; \alpha\,
\overline{\delta}_{ji} = \; \omega_{i}{}^{2n}(e_{j}) \;-\; \alpha\,
\overline{\delta}_{ij}, \\[6pt] & A^{2n}{}_{i} \;+\; (e_{i}\alpha) \;-\;
\omega_{i}{}^{2n}(e_{0}) \;+\; \alpha^{2}\, \delta^{2n-1}_{i} = \; 0.
\end{aligned}%
\end{cases}%
\end{equation}

\begin{myrmk}
The wedge term of $u^{2n}$, namely 
\begin{equation*}
\alpha\, u^{2n-1} - \omega_{j}{}^{2n}(e_{2n})\, u^{j} - \bigl(A^{2n}{}_{2n}
+ (e_{2n}\alpha)\bigr)\theta,
\end{equation*}
vanishes if and only if the chosen frame satisfies the
Carnot--Carath\'eodory frame condition in \cite{kao2025thesis}.
\end{myrmk}

\medskip

\subsection{Pseudohermitian connection and second fundamental form}

We first define the (pseudohermitian) second fundamental form: for $1\leq
i,j\leq 2n-1$

\begin{equation}
h_{ji}=h(e_{j},e_{i}):= -\langle \nabla_{e_{i}} e_{2n}, e_{j}\rangle=
\omega_{j}{}^{2n}(e_{i})
\end{equation}

\medskip

Using the integrability condition \eqref{hyperintcondi}, we can define the
hypersurface connection and the second fundamental form on $\Sigma$.

\begin{mydef}
Define the \emph{hypersurface connection} $\nabla ^{\Sigma }$ to be the
connection $1$-form ${}^{\Sigma }\omega _{i}{}^{j}$ on $T\Sigma $ (see (\ref%
{hcon})). The \emph{second fundamental form} $A:T\Sigma \rightarrow T\Sigma $
is defined by 
\begin{equation*}
Ae_{i}=\sum\limits_{j=1}^{2n-1}h_{ji}e_{j},\qquad \nabla _{X}e_{2n}=-AX.
\end{equation*}%
By \eqref{hyperintcondi}, we may also define the symmetrization of the
second fundamental form (see \cite{CCHY18}) by 
\begin{equation*}
\Tilde{A}e_{i}=\Tilde{h}_{ji}e_{j}=\bigl(h_{ji}-\alpha \overline{\delta }%
_{ji}\bigr)e_{j}.
\end{equation*}%
Moreover, let $J^{\prime }X=JX-\langle X,e_{n}\rangle e_{2n}$, then we have 
\begin{equation*}
\Tilde{A}X=-\nabla _{X}e_{2n}+\alpha J^{\prime }X.
\end{equation*}%
The $p$-mean curvature $H$ is given by 
\begin{equation}
H=\sum\limits_{i=1}^{2n-1} h_{ii}=\sum\limits_{i=1}^{2n-1} \Tilde{h}_{ii}.
\label{pmc}
\end{equation}%
A hypersurface $\Sigma $ is called $p$-minimal if its $p$-mean curvature $%
H=0.$
\end{mydef}

Thus, for $X,Y\in T\Sigma $, we have 
\begin{equation*}
\nabla _{X}Y=\nabla _{X}^{\Sigma }Y+\langle Y,AX\rangle e_{2n}.
\end{equation*}

\begin{myrmk}
We have%
\begin{equation*}
\nabla _{e_{i}}e_{2n}=\omega _{2n}{}^{j}(e_{i})e_{j}=-\omega
_{j}{}^{2n}(e_{i})e_{j}=-h_{ji}e_{j}
\end{equation*}%
and 
\begin{equation*}
\nabla _{e_{i}}e^{2n}=-\omega _{j}{}^{2n}(e_{i})e^{j}=-h_{ji}e^{j}
\end{equation*}

The hypersurface area form $d\Sigma$ is defined by 
\begin{equation}  \label{dsigma}
d\Sigma:=-e_{2n} \lrcorner dV
\end{equation}
Let $X=c_{i}e^{i}$. We also write 
\begin{equation*}
-\nabla _{X}e^{2n}=c_{i}h_{ji}e^{j}=AX,
\end{equation*}%
where the last equality defines the shape operator $A$.
\end{myrmk}

\subsection{Spin structures on hypersurfaces in a CR manifold}

Let $\Omega $ be an open domain in $(M,J,\theta )$ with smooth nonsingular
boundary $\Sigma =\partial \Omega $. We define the hypersurface spinor
bundle $\slashed{S}^{\Sigma }$ by 
\begin{equation}
\slashed{S}^{\Sigma }:=\slashed{S}|_{\Sigma },  \label{3-9a}
\end{equation}
\noindent together with the Clifford algebra homomorphism 
\begin{equation*}
c^{\Sigma }(\cdot ):T^{\ast }\Sigma \longrightarrow \mathrm{End}%
(\slashed{S}^{\Sigma }),\qquad c^{\Sigma }(v)=\mathrm{\ }c(v)c(e^{2n}).
\end{equation*}


\begin{mypro}
\label{P-3-1} The connection induced on $\slashed{S}^{\Sigma }$ satisfies%
\begin{equation*}
\nabla _{X}^{\Sigma }=\nabla _{X}-\tfrac{1}{2}c^{\Sigma }(AX).
\end{equation*}
\end{mypro}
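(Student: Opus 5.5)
We would prove Proposition \ref{P-3-1} by computing both connections locally in the adapted frame $\{e_{1},\dots,e_{2n}\}$ fixed above, in which $e_{2n}$ is the CR normal. First we make precise what $\nabla^{\Sigma}$ on $\slashed{S}^{\Sigma}$ means. By analogy with (\ref{spinconndef}), it is the spin connection of the hypersurface connection ${}^{\Sigma}\omega_{i}{}^{j}$ of (\ref{hcon}), acting through the hypersurface Clifford multiplication. Since $c^{\Sigma}(e^{i})c^{\Sigma}(e^{j})=c(e^{i})c(e^{2n})c(e^{j})c(e^{2n})=c(e^{i})c(e^{j})$ for $i,j\le 2n-1$ (using $c(e^{2n})^{2}=-1$ and anticommutation), we take
\begin{equation*}
\nabla^{\Sigma}_{X}\psi = X\psi+\tfrac{1}{4}\sum_{i,j=1}^{2n-1}{}^{\Sigma}\omega_{ij}(X)\,c(e^{i})c(e^{j})\psi,\qquad X\in T\Sigma .
\end{equation*}
We regard this as the defining formula and would state it explicitly.

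Next we compare with (\ref{spinconndef}) for the same $X\in T\Sigma$. By (\ref{hcon}), ${}^{\Sigma}\omega_{ij}(X)=\omega_{ij}(X)-\omega_{ij}(e_{2n})u^{2n}(X)$. Since $u^{2n}$ annihilates $T\Sigma=\langle e_{0},\dots,e_{2n-1}\rangle$ (indeed $u^{2n}(e_{0})=\alpha-\alpha=0$), we get ${}^{\Sigma}\omega_{ij}(X)=\omega_{ij}(X)$ on $T\Sigma$. Hence the difference $\nabla_{X}-\nabla^{\Sigma}_{X}$ consists only of the terms of (\ref{spinconndef}) with one index equal to $2n$:
\begin{equation*}
\tfrac14\sum_{i\le 2n-1}\big(\omega_{i,2n}(X)c(e^{i})c(e^{2n})+\omega_{2n,i}(X)c(e^{2n})c(e^{i})\big)=\tfrac12\sum_{i}\omega_{i}{}^{2n}(X)\,c(e^{i})c(e^{2n}).
\end{equation*}
Here we used skew-symmetry $\omega_{2n,i}=-\omega_{i,2n}$, anticommutation $c(e^{2n})c(e^{i})=-c(e^{i})c(e^{2n})$, and the fact that indices are raised trivially in the orthonormal frame.

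Finally, we identify this difference with $\tfrac12 c^{\Sigma}(AX)$. Writing $X=x_{k}e_{k}$ (plus possibly an $e_{0}$-component), we have $\omega_{i}{}^{2n}(X)=h_{ik}x_{k}$, which is the $i$-th coefficient of $AX=-\nabla_{X}e^{2n}$ by the Remark following the definition of $A$. Thus the difference equals $\tfrac12 c(AX)c(e^{2n})=\tfrac12 c^{\Sigma}(AX)$, and therefore $\nabla^{\Sigma}_{X}=\nabla_{X}-\tfrac12 c^{\Sigma}(AX)$.

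The main obstacle is the $e_{0}$ direction: $X$ may have a $T$-component, so $AX$ must be interpreted as the $1$-form $-\nabla_{X}e^{2n}=\omega_{i}{}^{2n}(X)e^{i}$, including $X=e_{0}$. We would check that the identity $\omega_{i}{}^{2n}(X)e^{i}=AX$ holds there by definition of $A$ as a form (no symmetry is needed). We also need that the $e_{2n}$-component vanishes, since $\omega_{2n}{}^{2n}=0$, so $AX$ lies in the span of $e^{1},\dots,e^{2n-1}$ and $c^{\Sigma}(AX)$ is defined. Keeping sign conventions for $c^{\Sigma}$ and for $h_{ji}$ consistent is the only delicate bookkeeping.
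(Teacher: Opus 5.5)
Your proposal is correct and follows the paper's proof. You split $\tfrac14\omega_{ij}(X)c(e^{i})c(e^{j})$ into the purely tangential part, which gives $\nabla^{\Sigma}_{X}$, and the terms with one index equal to $2n$; by skew-symmetry of $\omega$ and anticommutation these collapse to $\tfrac12 c(AX)c(e^{2n})=\tfrac12 c^{\Sigma}(AX)$, since $AX=-\nabla_{X}e^{2n}$. Your check that ${}^{\Sigma}\omega_{ij}(X)=\omega_{ij}(X)$ for $X\in T\Sigma$, because $u^{2n}$ annihilates $T\Sigma$, makes explicit the identification of the tangential part with $\nabla^{\Sigma}$ that the paper leaves implicit.
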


\begin{proof}
Let $i^{\prime },j^{\prime }$ run from $1$ to $2n-1$. We compute
\begin{align*}
\nabla_{X}\psi =& X\psi + \frac{1}{4} \omega_{ij}(X)c(e^{i})c(e^{j})\psi \\
=& X\psi + \frac{1}{4} \omega_{i^{\prime }j^{\prime }}(X)c(e^{i^{\prime
}})c(e^{j^{\prime }})\psi \\
&+\frac{1}{4} \omega_{(2n)j}(X)c(e^{2n})c(e^{j})\psi+\frac{1}{4}
\omega_{i(2n)}(X)c(e^{i})c(e^{2n})\psi \\
=& \nabla^{\Sigma}_{X}\psi +\frac{1}{4}c(e^{2n})c(\nabla_{X}e^{2n})\psi-%
\frac{1}{4}c(\nabla_{X}e^{2n})c(e^{2n})\psi \\
=& \nabla^{\Sigma}_{X}\psi +\frac{1}{2} c^{\Sigma}(AX) \psi .
\end{align*}
\end{proof}

The corresponding hypersurface Dirac operator is defined as 
\begin{equation}
\slashed{D}^{\Sigma }:=\sum_{i^{\prime }=1}^{2n-1}c^{\Sigma }(e^{i^{\prime
}})\nabla _{e_{i^{\prime }}}^{\Sigma }.  \label{hDirac}
\end{equation}

By a direct computation, one obtains

\begin{align*}
\slashed{D}^{\Sigma} =& \sum_{i^{\prime }=1}^{2n-1} c^{\Sigma}(e^{i^{\prime
}})\nabla^{\Sigma}_{e_{i^{\prime }}} \\
=& \sum_{i^{\prime }=1}^{2n-1} c(e^{i^{\prime }})c(e^{2n})
[\nabla_{e_{i^{\prime }}}- \tfrac{1}{2}c(Ae_{i^{\prime }})c(e^{2n})] \\
=& \sum_{i^{\prime }=1}^{2n-1} -c(e^{2n})c(e^{i^{\prime
}})\nabla_{e_{i^{\prime }}} -\frac{1}{2}c(e^{i^{\prime }})c(h_{j^{\prime
}i^{\prime }}e^{j^{\prime }}) \\
=& \sum_{i^{\prime }=1}^{2n-1} -c(e^{2n})[\slashed{D}-c(e^{2n})\nabla_{e_{2n}}]
+\frac{1}{2}H -\frac{1}{2}\sum\limits_{i^{\prime }<j^{\prime }}
c(e^{i^{\prime }})c(e^{j^{\prime }})[h_{j^{\prime }i^{\prime }}-h_{i^{\prime
}j^{\prime }}] \\
=& -c(e^{2n}) \slashed{D} \;-\; \nabla_{e_{2n}} \;+\; \tfrac{1}{2} H \;+\;
\alpha \sum_{\beta=1}^{n-1} c(e^{\beta})c(e^{\beta+n}).
\end{align*}
Recall 
\begin{equation}
\Lambda :=\sum_{\beta =1}^{n}c(e^{\beta })c(e^{\beta +n})  \label{Cl2}
\end{equation}%
and define 
\begin{equation}
\Lambda ^{\Sigma }:=\sum_{\beta =1}^{n-1}c(e^{\beta })c(e^{\beta +n}).
\label{hCl2}
\end{equation}
Therefore, we have \textbf{the hypersurface Dirac operator}

\begin{equation}  \label{diraceqnhyper}
\slashed{D}^{\Sigma} = -c(e^{2n}) \slashed{D} \;-\; \nabla_{e_{2n}} \;+\; \tfrac{1%
}{2} H \;+\; \alpha \Lambda^{\Sigma}.
\end{equation}
However, the operator $\slashed{D}^{\Sigma}$ is not self-adjoint. For this
reason, we introduce a modified hypersurface Dirac operator $\Tilde{\slashed{D}}%
^{\Sigma}$.

\begin{mypro}
The following properties hold:

\begin{itemize}
\item For any spinors $\psi, \varphi$, one has 
\begin{equation}  \label{hyperantisym}
\langle c^{\Sigma}(e^{i}) \psi, \varphi \rangle = - \langle \psi,
c^{\Sigma}(e^{i}) \varphi \rangle.
\end{equation}

\item The Clifford action satisfies the compatibility relation 
\begin{equation}  \label{cliffchainrule}
\begin{aligned} \nabla^{\Sigma}_{e_{\jp}}(c^{\Sigma}(e^{\ip})\psi ) &=
c^{\Sigma}(\nabla^{\Sigma}_{e_{\jp}} e^{\ip}) \psi + c^{\Sigma}(e^{\ip})
\nabla^{\Sigma}_{e_{\jp}} \psi .\end{aligned}
\end{equation}

\item The hypersurface connection is compactible with the hypersurface
metric: i.e. $e_{i^{\prime }}\langle \psi, \varphi \rangle= \langle
\nabla^{\Sigma}_{e_{i^{\prime }}} \psi,\varphi \rangle+\langle \psi,
\nabla^{\Sigma}_{e_{i^{\prime }}}\varphi \rangle$.
\end{itemize}
\end{mypro}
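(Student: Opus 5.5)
Each of the three items reduces to the corresponding property of the ambient Clifford action $c$ and spinor connection $\nabla$, combined with Proposition \ref{P-3-1}. We will use two standard facts about $\slashed{S}$ with its Hermitian metric. First, the real Clifford action is skew-adjoint: $\langle c(v)\psi,\varphi\rangle=-\langle\psi,c(v)\varphi\rangle$ for real $v$. This can be checked directly from $c(e^{\beta})=\varepsilon_\beta-\iota_\beta$ and $c(e^{\beta+n})=\sqrt{-1}(\varepsilon_\beta+\iota_\beta)$, since $\iota_\beta$ is (up to the normalization of the metric) the adjoint of $\varepsilon_\beta$. Second, $\nabla$ is metric, and $\nabla_X(c(v)\psi)=c(\nabla_Xv)\psi+c(v)\nabla_X\psi$. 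Both follow from Definition \ref{spinconn}: the term $\frac14\omega_{ij}c(e^i)c(e^j)$ is skew-adjoint because $\omega_{ij}=-\omega_{ji}$, and its commutator with $c(e^k)$ equals $c(\omega_k{}^j e^j)$-type terms by the Clifford relations.

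For \eqref{hyperantisym}, take $1\le i\le 2n-1$ and write $c^\Sigma(e^i)=c(e^i)c(e^{2n})$. Skew-adjointness, applied twice, gives
\[
\langle c(e^i)c(e^{2n})\psi,\varphi\rangle=\langle\psi,c(e^{2n})c(e^i)\varphi\rangle .
\]
Since $i\neq 2n$, the Clifford relation gives $c(e^{2n})c(e^i)=-c(e^i)c(e^{2n})$, and the claim follows.

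For the metric compatibility, Proposition \ref{P-3-1} gives $\nabla^\Sigma_X=\nabla_X-\frac12 c^\Sigma(AX)$. Then
\[
e_{i'}\langle\psi,\varphi\rangle=\langle\nabla_{e_{i'}}\psi,\varphi\rangle+\langle\psi,\nabla_{e_{i'}}\varphi\rangle ,
\]
and the correction terms contribute $-\frac12\bigl(\langle c^\Sigma(AX)\psi,\varphi\rangle+\langle\psi,c^\Sigma(AX)\varphi\rangle\bigr)$. This vanishes by \eqref{hyperantisym}, because $AX$ is a real combination of $e^{j'}$ with $j'\le 2n-1$.

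For \eqref{cliffchainrule}, expand $\nabla^\Sigma_{e_{j'}}\bigl(c(e^{i'})c(e^{2n})\psi\bigr)$ using Proposition \ref{P-3-1} and the ambient Leibniz rule. This gives
\[
c(\nabla_{e_{j'}}e^{i'})c(e^{2n})\psi+c(e^{i'})c(\nabla_{e_{j'}}e^{2n})\psi+c^\Sigma(e^{i'})\nabla_{e_{j'}}\psi-\tfrac12 c^\Sigma(Ae_{j'})c^\Sigma(e^{i'})\psi .
\]
We then decompose $\nabla_{e_{j'}}e^{i'}=\nabla^\Sigma_{e_{j'}}e^{i'}+h_{i'j'}e^{2n}$ (sign to be checked against the remark on $\nabla e^{2n}$), use $\nabla_{e_{j'}}e^{2n}=-h_{kj'}e^k$, and replace $\nabla_{e_{j'}}\psi$ by $\nabla^\Sigma_{e_{j'}}\psi+\frac12 c^\Sigma(Ae_{j'})\psi$.

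The main obstacle is verifying that the leftover terms cancel. They are
\[
h_{i'j'}c(e^{2n})c(e^{2n})-c(e^{i'})c(Ae_{j'})+\tfrac12\bigl[c^\Sigma(e^{i'}),c^\Sigma(Ae_{j'})\bigr].
\]
We first reduce $c^\Sigma(u)c^\Sigma(v)=c(u)c(e^{2n})c(v)c(e^{2n})=c(u)c(v)$ for $u,v\perp e^{2n}$, so the commutator becomes $[c(e^{i'}),c(Ae_{j'})]=2c(e^{i'})c(Ae_{j'})+2h_{i'j'}$. With $c(e^{2n})^2=-1$, the total is $-h_{i'j'}-c(e^{i'})c(Ae_{j'})+c(e^{i'})c(Ae_{j'})+h_{i'j'}=0$. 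The only real risk is the sign conventions for $A$ and $h$, which we will fix consistently with the Remark following \eqref{pmc}.
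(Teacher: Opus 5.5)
Your proposal is correct and follows essentially the same route as the paper. For the first item you use skew-adjointness of $c(e^{i'})$ and $c(e^{2n})$ plus anticommutation. For the second you use the same expansion via Proposition \ref{P-3-1} and the decomposition $\nabla_{e_{j'}}e^{i'}=\nabla^{\Sigma}_{e_{j'}}e^{i'}+h_{i'j'}e^{2n}$; your tentative sign there is the correct one. The cancellation is the same as the paper's: $-h_{i'j'}$ against the term coming from $\{c(e^{i'}),c(Ae_{j'})\}=-2h_{i'j'}$. Your argument for the metric-compatibility item, via skew-adjointness of the correction $\tfrac12 c^{\Sigma}(AX)$, fills in what the paper only calls straightforward.
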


\begin{proof}
For (\ref{hyperantisym}), we have 
\begin{align*}
\langle c^{\Sigma}(e_{i^{\prime }}) \psi, \varphi \rangle =& \langle
c(e^{i^{\prime }})c(e^{2n}) \psi, \varphi \rangle \\
=& \langle \psi, c(e^{2n})c(e^{i^{\prime }})\varphi \rangle \\
=& \langle \psi, -c^{\Sigma}(e_{i^{\prime }}) \varphi \rangle .
\end{align*}
For (\ref{cliffchainrule}), we have
\begin{align*}
& \nabla _{e_{j^{\prime }}}^{\Sigma }(c^{\Sigma }(e^{i^{\prime }})\psi ) \\
=& [\nabla _{e_{j^{\prime }}}-\tfrac{1}{2}c(Ae_{j^{\prime
}})c(e^{2n})](c(e^{i^{\prime }})c(e^{2n})\psi ) \\
=& c(\nabla _{e_{j^{\prime }}}e^{i^{\prime }})c(e^{2n})\psi +c(e^{i^{\prime
}})c(\nabla _{e_{j^{\prime }}}e^{2n})\psi +c(e^{i^{\prime }})c(e^{2n})\nabla
_{e_{j^{\prime }}}\psi -\frac{1}{2}c(Ae_{j^{\prime }})c(e^{i^{\prime }})\psi
\\
=& c(\nabla _{e_{j^{\prime }}}^{\Sigma }e^{i^{\prime }}+h_{i^{\prime
}j^{\prime }}e^{2n})c(e^{2n})\psi +c(e^{i^{\prime }})c(-Ae_{j^{\prime
}})\psi -\frac{1}{2}c(Ae_{j^{\prime }})c(e^{i^{\prime }})\psi \\
& +c(e^{i^{\prime }})c(e^{2n})[\nabla _{e_{j^{\prime }}}^{\Sigma }+\frac{1}{2%
}c^{\Sigma }(Ae_{j^{\prime }})]\psi \\
=& c^{\Sigma }(\nabla _{e_{j^{\prime }}}^{\Sigma }e^{i^{\prime }})\psi
+c^{\Sigma }(e^{i^{\prime }})\nabla _{e_{j^{\prime }}}^{\Sigma }\psi
-h_{i^{\prime }j^{\prime }}\psi -\tfrac{1}{2}\big(c(e^{i^{\prime
}})c(Ae_{j^{\prime }})+c(Ae_{j^{\prime }})c(e^{i^{\prime }})\big)\psi \\
=& c^{\Sigma }(\nabla _{e_{j^{\prime }}}^{\Sigma }e^{i^{\prime }})\psi
+c^{\Sigma }(e^{i^{\prime }})\nabla _{e_{j^{\prime }}}^{\Sigma }\psi
-h_{i^{\prime }j^{\prime }}\psi -\tfrac{1}{2}h_{k^{\prime }j^{\prime }}\big(%
c(e_{i^{\prime }})c(e_{k^{\prime }})+c(e_{k^{\prime }})c(e_{i^{\prime }})%
\big)\psi \\
=& c^{\Sigma }(\nabla _{e_{j^{\prime }}}^{\Sigma }e_{i^{\prime }})\psi
+c^{\Sigma }(e_{i^{\prime }})\nabla _{e_{j^{\prime }}}^{\Sigma }\psi .
\end{align*}
The third property is straightforward.
\end{proof}

\begin{mypro}
\label{selfadj} We have the following equality

\begin{equation*}
\oint\limits_{\Sigma} \langle \slashed{D}^{\Sigma} \psi, \varphi \rangle
d\Sigma =\oint_{\Sigma} [\langle \psi, \slashed{D}^{\Sigma} \varphi \rangle +
\langle \psi, 2\alpha c^{\Sigma}(e^{n}) \varphi \rangle ]d\Sigma.
\end{equation*}
\end{mypro}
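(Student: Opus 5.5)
The plan is the standard one for hypersurface Dirac operators: integrate by parts on the closed hypersurface $\Sigma$ and track the term that makes the pseudohermitian picture differ from the Riemannian one. Fix a point and work pointwise with the adapted frame $\{e_0,e_1,\dots,e_{2n-1}\}$ of $T\Sigma$. Using (\ref{hyperantisym}), the compatibility (\ref{cliffchainrule}) and metric compatibility of $\hyperconn$, I will compute
\begin{equation*}
\langle \slashed{D}^{\Sigma}\psi,\varphi\rangle-\langle \psi,\slashed{D}^{\Sigma}\varphi\rangle
=\sum_{i'=1}^{2n-1}\Big(e_{i'}\langle c^{\Sigma}(e^{i'})\psi,\varphi\rangle-\langle c^{\Sigma}(\nabla^{\Sigma}_{e_{i'}}e^{i'})\psi,\varphi\rangle\Big).
\end{equation*}
The right side equals $\sum_{i'} e_{i'}(Y^{i'})+\sum_{i',j'}Y^{j'}\,{}^{\Sigma}\omega_{i'}{}^{j'}(e_{i'})$ with $Y^{j'}:=\langle c^{\Sigma}(e^{j'})\psi,\varphi\rangle$. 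This is a ``horizontal divergence'' of the vector field $Y=Y^{j'}e_{j'}$ tangent to $\xi_1\subset T\Sigma$, computed with the hypersurface connection.

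The second step converts this into an honest divergence with respect to $d\Sigma$ and identifies the error. I will compute $d(Y\lrcorner d\Sigma)$ on $\Sigma$, using the hypersurface structure equations (\ref{hyperstreqn}) restricted to $\Sigma$, where $u^{2n}=0$ and the coframe is $\{u^0,u^1,\dots,u^{2n-1}\}$. Up to the constant $c_n$, $d\Sigma$ is $u^0\wedge u^1\wedge\cdots\wedge u^{2n-1}$. Then $d(Y\lrcorner d\Sigma)=(\operatorname{div}_\Sigma Y)\,d\Sigma$, where the divergence picks up contributions $Y^{j'}$ times the trace of the connection forms from the $du^{i}$, plus contributions from $du^0$ via its $u^0\wedge u^{2n-1}$ component. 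By $\omega_{i}{}^{i}=0$ (skew-symmetry), the $\omega$ terms produce exactly ${}^{\Sigma}\omega_{i'}{}^{j'}(e_{i'})$. The term $-2\alpha\,u^0\wedge u^{2n-1}$ in $du^0$ then produces an extra $-2\alpha Y^{2n-1}$, possibly with the opposite sign depending on orientation conventions. Here $e_{2n-1}$ plays the role of the characteristic direction; in the statement it is written $e^n$, matching the paper's labeling of the characteristic vector.

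The $u^0$ terms in $du^{i}$ (torsion and $\alpha\,{}^{\Sigma}\omega_{2n}{}^{i}$) contribute only to the $e_0$-component. They should not enter, because $Y$ has no $e_0$ component, and the trace contribution $A^{i}{}_{i}=0$ vanishes by the symmetry and $J$-anti-invariance in (\ref{realstreqn}). Stokes' theorem on the closed $\Sigma$ then gives
\begin{equation*}
\oint_\Sigma \big(\langle \slashed{D}^{\Sigma}\psi,\varphi\rangle-\langle\psi,\slashed{D}^{\Sigma}\varphi\rangle\big)\,d\Sigma=\oint_\Sigma 2\alpha\,\langle c^{\Sigma}(e^{n})\psi,\varphi\rangle\,d\Sigma .
\end{equation*}
By (\ref{hyperantisym}) this equals $-\oint_\Sigma 2\alpha\langle\psi,c^{\Sigma}(e^n)\varphi\rangle\,d\Sigma$. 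The overall sign must be matched to the statement, which fixes the sign convention from $du^0$.

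The main obstacle is this divergence computation: pinning down exactly which coefficient from (\ref{hyperstreqn}) survives, and getting its sign right. Two sources of error need care. One is the orientation $d\Sigma=-e_{2n}\lrcorner dV$. The other is the identification of the characteristic direction index, $n$ versus $2n-1$. I would first check the sign in the Heisenberg group, using a simple hypersurface with nonconstant $\alpha$.
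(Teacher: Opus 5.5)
Your route is the paper's: it integrates $d\big(\langle\psi,-c^{\Sigma}(e^{i'})\varphi\rangle\,e_{i'}\lrcorner d\Sigma\big)$ over the closed $\Sigma$, and the only non-Riemannian contribution is the $d\theta$-term coming from $e_0=T+\alpha e_{2n}$. The gap is that you never determine the sign of that term, which is the whole content of the proposition, and the sign you tentatively use is wrong. An extra $-2\alpha Y^{n}$ in the divergence leads you to $\oint_\Sigma(\langle\slashed{D}^{\Sigma}\psi,\varphi\rangle-\langle\psi,\slashed{D}^{\Sigma}\varphi\rangle)\,d\Sigma=-\oint_\Sigma 2\alpha\langle\psi,c^{\Sigma}(e^{n})\varphi\rangle\,d\Sigma$, which is the negative of the claim. ``Matching the sign to the statement'' is not an argument. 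Nor is the sign a matter of orientation. The divergence of $Y$ is unchanged if $d\Sigma$ is multiplied by any nonzero constant, including $-1$. The integral of an exact form over closed $\Sigma$ vanishes for either orientation. Reversing $e_{2n}$ (so $e_n\mapsto -e_n$, $\alpha\mapsto-\alpha$) changes the sign of both $\slashed{D}^{\Sigma}$ and $\alpha c^{\Sigma}(e^{n})$. The sign is therefore forced, not conventional.

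The missing step is one line. Take $E_0=e_0$, $E_{i'}=e_{i'}$ and $Y^0=0$. The $A=0$ term of $\operatorname{div}Y=\sum_A\big(E_A(Y^A)+du^A(Y,E_A)\big)$ is
\[
du^0(Y,e_0)=d\theta(Y,T+\alpha e_{2n})=\alpha\,d\theta(Y,e_{2n})=-2\alpha\langle Y,Je_{2n}\rangle=2\alpha\langle Y,e_n\rangle=2\alpha Y^{n},
\]
using $T\lrcorner d\theta=0$ and $d\theta(U,W)=-2\langle U,JW\rangle$. Equivalently, $-2\alpha(u^0\wedge u^n)(Y,e_0)=+2\alpha Y^n$, because $u^0(Y)=0$ and $u^0(e_0)=1$. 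Hence $\operatorname{div}Y$ equals your horizontal divergence plus $2\alpha Y^n$. Stokes then gives $\oint_\Sigma(\langle\slashed{D}^{\Sigma}\psi,\varphi\rangle-\langle\psi,\slashed{D}^{\Sigma}\varphi\rangle)\,d\Sigma=-\oint_\Sigma 2\alpha\langle c^{\Sigma}(e^n)\psi,\varphi\rangle\,d\Sigma=\oint_\Sigma 2\alpha\langle\psi,c^{\Sigma}(e^n)\varphi\rangle\,d\Sigma$, as stated.

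Two smaller points. First, the $u^0$-terms of $du^{i}$ vanish on $(Y,e_{i'})$ simply because both vectors are horizontal, so $A^{i}{}_{i}=0$ is not needed. Second, with the paper's convention $\nabla e_i=\omega_i{}^{j}e_j$, the connection term in both your pointwise identity and the divergence is $Y^{j'}\,{}^{\Sigma}\omega_{j'}{}^{i'}(e_{i'})$. The expression you wrote is its negative. This is harmless because you made the same swap in both places, but it will cause confusion when you check the cancellation.
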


\begin{proof}
Using the fact that 
\begin{align*}
&\oint\limits_{\Sigma} d(\langle \psi, -c^{\Sigma}(e^{i^{\prime }}) \varphi
\rangle e_{i^{\prime }} \mathbin{\lrcorner} d\Sigma) \\
=& \oint\limits_{\Sigma} e_{i^{\prime }}\langle \psi,
-c^{\Sigma}(e_{i^{\prime }}) \varphi \rangle d\Sigma + \oint\limits_{\Sigma}
\langle \psi, c^{\Sigma}(\nabla^{\Sigma}_{e_{i^{\prime }}} e^{i^{\prime }})
\varphi \rangle d\Sigma - \oint\limits_{\Sigma} \langle \psi,
c^{\Sigma}(2\alpha e^{n}) \varphi \rangle d\Sigma
\end{align*}

We have 
\begin{align*}
&\oint\limits_{\Sigma} \langle \slashed{D}^{\Sigma} \psi, \varphi \rangle
d\Sigma \\
=&\oint\limits_{\Sigma} \langle \nabla^{\Sigma}_{e_{i^{\prime }}} \psi,
-c^{\Sigma}(e^{i^{\prime }})\varphi \rangle d\Sigma \\
=& \oint\limits_{\Sigma} e_{i^{\prime }}\langle
\psi,-c^{\Sigma}(e^{i^{\prime }})\varphi \rangle d\Sigma+
\oint\limits_{\Sigma} \langle \psi, \nabla^{\Sigma}_{e_{i^{\prime
}}}(c^{\Sigma}(e^{i^{\prime }})\varphi)\rangle d\Sigma \\
=&\oint\limits_{\Sigma} d(\langle \psi, -c^{\Sigma}(e^{i^{\prime }}) \varphi
\rangle e_{i^{\prime }} \mathbin{\lrcorner} d\Sigma) +\oint\limits_{\Sigma}
\langle \psi,\slashed{D}^{\Sigma} \varphi \rangle d\Sigma
+\oint\limits_{\Sigma} \langle \psi, c^{\Sigma}(2\alpha e^{n}) \varphi
\rangle d\Sigma \\
=& \oint_{\Sigma} [\langle \psi, \slashed{D}^{\Sigma}\varphi \rangle + \langle
\psi, 2\alpha c^{\Sigma}(e^{n}) \varphi \rangle ]d\Sigma.
\end{align*}
\end{proof}

We define the \textbf{modified hypersurface Dirac operator} as follows: 
\begin{equation}
\Tilde{\slashed{D}}^{\Sigma}:=\slashed{D}^{\Sigma}+\alpha c^{\Sigma }(e^{n}).
\label{mhDirac}
\end{equation}

\begin{mylem}
\label{selfadjforhyperdirac} This operator $\Tilde{\slashed{D}}^{\Sigma}$ is
self-adjoint in $L^{2}(\Sigma ,\slashed{S}^{\Sigma })$.
\end{mylem}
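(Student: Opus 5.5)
The plan is to combine Proposition \ref{selfadj} with the skew-adjointness (\ref{hyperantisym}) of the hypersurface Clifford action. Since the definition (\ref{mhDirac}) adds a zeroth-order term to $\slashed{D}^{\Sigma}$, self-adjointness of $\Tilde{\slashed{D}}^{\Sigma}$ reduces to checking that this term exactly absorbs the defect $2\alpha c^{\Sigma}(e^{n})$ appearing in Proposition \ref{selfadj}. Here ``self-adjoint'' is understood as formal symmetry on smooth sections of $\slashed{S}^{\Sigma}$ over the closed hypersurface $\Sigma$, away from singular points; this is the sense in which Proposition \ref{selfadj} is stated. Essential self-adjointness then follows by the standard argument for first-order symmetric operators that are elliptic along $\xi_1$ on a closed manifold, and I would remark on this only briefly.

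\textbf{Step 1 (the zeroth-order term).} By (\ref{hyperantisym}), $\langle c^{\Sigma}(e^{n})\psi,\varphi\rangle=-\langle \psi,c^{\Sigma}(e^{n})\varphi\rangle$ pointwise. Since $\alpha$ is real,
\begin{equation*}
\oint_{\Sigma}\langle \alpha c^{\Sigma}(e^{n})\psi,\varphi\rangle\,d\Sigma=-\oint_{\Sigma}\langle \psi,\alpha c^{\Sigma}(e^{n})\varphi\rangle\,d\Sigma .
\end{equation*}

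\textbf{Step 2 (combining).} Apply Proposition \ref{selfadj} and add the identity from Step 1:
\begin{equation*}
\oint_{\Sigma}\langle \Tilde{\slashed{D}}^{\Sigma}\psi,\varphi\rangle\,d\Sigma
=\oint_{\Sigma}\big[\langle \psi,\slashed{D}^{\Sigma}\varphi\rangle+\langle\psi,2\alpha c^{\Sigma}(e^{n})\varphi\rangle-\langle\psi,\alpha c^{\Sigma}(e^{n})\varphi\rangle\big]\,d\Sigma ,
\end{equation*}
which equals $\oint_{\Sigma}\langle \psi,\Tilde{\slashed{D}}^{\Sigma}\varphi\rangle\,d\Sigma$.

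\textbf{Main obstacle.} The delicate point is sign bookkeeping: one must make sure that the term $2\alpha c^{\Sigma}(e^{n})$ in Proposition \ref{selfadj} comes with exactly the sign that the definition (\ref{mhDirac}) cancels. This requires rechecking that proposition's divergence computation, in particular the contribution of $d(e_{i'}\lrcorner d\Sigma)$. By (\ref{hyperstreqn}), $du^{0}$ contains $-2\alpha u^{0}\wedge u^{2n-1}$, so the vector field $e_{n}=e_{2n-1}$ has nonzero divergence $\pm 2\alpha$ with respect to $d\Sigma$ beyond the connection terms. One also has to check that the sign conventions for $\langle\cdot,\cdot\rangle$, namely Hermitian and linear in the first slot, are used consistently. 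If the sign in Proposition \ref{selfadj} were reversed, the modified operator would instead require $-\alpha c^{\Sigma}(e^{n})$, so I would verify that computation carefully before concluding.
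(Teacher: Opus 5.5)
Your argument is correct and is the paper's proof: the paper only says the lemma follows from Proposition \ref{selfadj}, and you have written out the cancellation, using (\ref{hyperantisym}) and the reality of $\alpha$ to see that $\alpha c^{\Sigma}(e^{n})$ is pointwise skew-adjoint. The sign you flagged does come out right. On $\Sigma$ one has $d\theta(e_{n},e_{0})=2\alpha$, so $\mathrm{div}_{\Sigma}e_{n}$ picks up an extra $+2\alpha$. This produces the defect $+2\alpha c^{\Sigma}(e^{n})$ in Proposition \ref{selfadj} with exactly the sign that the added term $\alpha c^{\Sigma}(e^{n})$ cancels.
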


\begin{proof}
It follows from Proposition \ref{selfadj}.
\end{proof}

By (\ref{diraceqnhyper}), we have a formula for the modified hypersurface
Dirac operator

\begin{equation}  \label{moddiraceqnhyper}
\Tilde{\slashed{D}}^{\Sigma} = -c(e^{2n}) \slashed{D} \;-\; \nabla_{e_{2n}} \;+\; 
\tfrac{1}{2} H \;+\; \alpha \Lambda.
\end{equation}


\section{Reilly-type inequality in CR manifolds: proof of theorem \protect
\ref{thm1}\label{Sec4}}

Let $\Omega$ be an open domain in $(M,J,\theta)$ with (smooth) boundary $%
\Sigma = \partial \Omega$. We begin by considering the following two $1$%
-forms: 
\begin{align}  \label{oneforms}
w_{1}(X) &= \langle c(X^{*})\slashed{D} \psi, \psi \rangle, \\
w_{2}(X) &= \langle \nabla_{X}\psi, \psi \rangle.
\end{align}
The divergences of these forms are given by 
\begin{align*}
(\mathrm{div}_{b}\,w_{1}) &= \langle \slashed{D}^{2} \psi, \psi \rangle - \|
\slashed{D} \psi \|^{2}, \\
(\mathrm{div}_{b}\,w_{2}) &= -\langle \nabla^{*}\nabla\psi, \psi \rangle +
\| \nabla \psi \|^{2}.
\end{align*}

Applying Stokes' theorem and the Weitzenb\"{o}ck-type formula for CR Dirac
operator (cf. Lemma \ref{Weitzen}), we obtain 
\begin{equation}  \label{divformula}
\begin{aligned} &\oint_{\Sigma} \langle -c(e^{2n}) \slashed{D}\psi -
\nabla_{e_{2n}} \psi, \psi \rangle \, d\Sigma \\ &= \int_{\Omega}
\Big[-|\slashed{D} \psi|^{2} + W |\psi|^{2} + |\nabla \psi|^{2} -2 \big\langle
\sum_{\beta=1}^{n} c(e^{\beta })c(e^{\beta+n}) \nabla_{T} \psi, \psi
\big\rangle \Big] dV. \end{aligned}
\end{equation}

\begin{myrmk}
\label{R-4-1} Applying Stokes' theorem to a quantity smooth up to the
boundary in a smooth domain gives an integral of a smooth quantity over the
boundary $\Sigma $. As the set of singular points in $\Sigma $ is a
submanifold, hence of measure $0$ by \cite[Theorem D]{CHY07}, we may choose
a suitable frame on the nonsingular part to express the boundary integrand.
So the formula (\ref{divformula}) and later formulas hold true if one
considers the boundary integrals only over the set of nonsingular points.
\end{myrmk}

\medskip This yields the \emph{complex Reilly formula} in the CR setting: 
\begin{equation}  \label{cpxreillyeqn}
\begin{aligned} &\oint_{\Sigma} \langle \Tilde{\slashed{D}}^{\Sigma} \psi,
\psi \rangle \, d\Sigma - \oint_{\Sigma} \frac{1}{2} H |\psi|^{2} \, d\Sigma
- \oint_{\Sigma} \alpha \langle \Lambda \psi,\psi \rangle \, d\Sigma\\ &=
\int_{\Omega} \Big[ -|\slashed{D} \psi|^{2} + W |\psi|^{2} + |\nabla \psi|^{2}
- 2\big\langle \Lambda \nabla_{T} \psi, \psi \big\rangle \Big] dV.
\end{aligned}
\end{equation}

\noindent Splitting the complex Reilly formula (\ref{cpxreillyeqn}) into the
real part and the imaginary part, we have

1. Real part (CR version of Reilly formula): 
\begin{equation}  \label{reillyeqn}
\begin{aligned} &\oint_{\Sigma} \langle \Tilde{\slashed{D}}^{\Sigma} \psi,
\psi \rangle \, d\Sigma - \frac{1}{2}\oint_{\Sigma} H |\psi|^{2} \, d\Sigma
\\ &= \int_{\Omega} \Big[ -|\slashed{D} \psi|^{2} + W |\psi|^{2} + |\nabla
\psi|^{2} - 2\re[ \big\langle \Lambda \nabla_{T} \psi, \psi \big\rangle]
\Big] dV. \end{aligned}
\end{equation}

which completes the proof of 1) part of Theorem \ref{thm1}.

2. Imaginary part: 
\begin{equation}  \label{imreillyeqn}
\begin{aligned} &\oint_{\Sigma} \alpha \langle \Lambda \psi,\psi \rangle \,
d\Sigma = \sqrt{-1}\int_{\Omega} 2\im[ \big\langle \Lambda \nabla_{T} \psi,
\psi \big\rangle] dV. \end{aligned}
\end{equation}

In fact, we are going to show that (\ref{imreillyeqn}) holds ``trivially".
We need a proposition for $\Lambda$.

\begin{mypro}
It holds that $[\nabla, \Lambda]=0 $ on $\Gamma(\bar{\Omega},\slashed{S})$.
\end{mypro}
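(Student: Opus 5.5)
The plan is to reduce the claim to an algebraic statement about $\Lambda$ and then use that the Tanaka--Webster connection preserves $J$. By (\ref{hardT2}), $\Lambda$ acts on $\Lambda^{k}T^{*(0,1)}M\subset\slashed{S}$ as multiplication by the constant $\sqrt{-1}(2k-n)$. So it suffices to show that, for every vector field $X$, the spinor connection $\nabla_X$ of (\ref{spinconndef}) maps $\Gamma(\Lambda^{0,k})$ into $\Gamma(\Lambda^{0,k})$. Granting this, for $\psi\in\Gamma(\Lambda^{0,k})$ we get
\[
\nabla_X(\Lambda\psi)=\sqrt{-1}(2k-n)\nabla_X\psi=\Lambda\nabla_X\psi ,
\]
because the coefficient is constant. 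Linearity over the decomposition $\slashed{S}=\bigoplus_k\Lambda^{0,k}$ then gives $[\nabla_X,\Lambda]=0$ for all $X$.

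An equivalent route, which I would use as a cross-check, is to treat $\Lambda$ as the Clifford image of a $2$-form. It equals $c$ applied to $\sum_\beta e^\beta\wedge e^{\beta+n}$, which is a multiple of $d\theta|_\xi$ (up to the convention $\overline{\delta}_{ij}$). By the Clifford compatibility of the spinor connection, the analogue of (\ref{cliffchainrule}) in $M$, we have $[\nabla_X,c(\eta)]=c(\nabla_X\eta)$. Also $\nabla d\theta=0$, since $\nabla\theta=0$ and $\nabla J=0$ and the Levi metric is parallel, and $\nabla$ preserves $\xi$. This gives the claim directly.

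To prove that $\nabla_X$ preserves degree, I would expand the connection term $\frac14\omega_{ij}(X)c(e^i)c(e^j)$ in the complex frame. From the structure equations (\ref{realstreqn}), $\omega_i{}^j$ is skew and commutes with $J$, i.e.\ $\omega_{\beta}{}^{\gamma}=\omega_{\beta+n}{}^{\gamma+n}$ and $\omega_{\beta}{}^{\gamma+n}=-\omega_{\beta+n}{}^{\gamma}$. This means the matrix of $\omega$ lies in $\mathfrak{u}(n)$. Write $c(e^\beta)=\varepsilon_\beta-\iota_\beta$ and $c(e^{\beta+n})=\sqrt{-1}(\varepsilon_\beta+\iota_\beta)$, and group the terms according to the complex combinations $\omega_\beta{}^\gamma\pm\sqrt{-1}\,\omega_\beta{}^{\gamma+n}$. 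The degree-changing pieces $\varepsilon_\beta\varepsilon_\gamma$ and $\iota_\beta\iota_\gamma$ then appear with coefficients that vanish by the $J$-invariance relations above. Only the terms $\varepsilon_\beta\iota_\gamma$, $\iota_\gamma\varepsilon_\beta$ and scalars survive, and all of these preserve $k$. The derivative term $X\psi$ is applied componentwise in the frame $\theta^{\bar\beta}$, so it also preserves degree.

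The main obstacle is this bookkeeping step: checking that the $\varepsilon\varepsilon$ and $\iota\iota$ coefficients cancel exactly. The sign conventions in (\ref{realstreqn}), in particular $-\omega_i{}^j=\bar\delta_i{}^k\omega_k{}^l\bar\delta_l{}^j$, need care. I would first do the case $n=1$ by hand and then argue index by index. Finally, I would note the consequence used afterwards: since $\nabla$ is metric and commutes with $\Lambda$, and $\Lambda$ is skew-adjoint, $\re\langle\Lambda\nabla_T\psi,\psi\rangle=\frac12 T\langle\Lambda\psi,\psi\rangle$. This yields (\ref{imreillyeqn}) via a divergence argument with the deviation function $\alpha$.
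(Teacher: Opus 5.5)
Your proposal is correct, and your primary route differs from the paper's. The paper's proof is essentially your cross-check. It uses Clifford compatibility to write $[\nabla_X,\Lambda]=\sum_\beta\big(c(\nabla_Xe^\beta)c(e^{\beta+n})+c(e^\beta)c(\nabla_Xe^{\beta+n})\big)$. It then expands $\nabla_Xe^\beta=-\omega_j{}^\beta(X)e^j$ and cancels the terms pairwise using the $J$-invariance relations $\omega_{\beta+n}{}^{\gamma+n}=\omega_\beta{}^\gamma$ and $\omega_{\beta+n}{}^{\gamma}=-\omega_\beta{}^{\gamma+n}$ from (\ref{realstreqn}). This is just a coordinate form of $\nabla d\theta=0$.

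Your main route works differently. You show that $\nabla_X$ preserves each $\Lambda^{0,k}$, because the $(2,0)$ and $(0,2)$ parts of $\omega_{ij}(X)e^i\wedge e^j$ vanish when $\omega$ takes values in $\mathfrak{u}(n)$. You then use the scalar action (\ref{hardT2}). The eigenvalues $\sqrt{-1}(2k-n)$ are distinct for distinct $k$, so the eigenbundles of $\Lambda$ are exactly the $\Lambda^{0,k}$. Hence degree preservation and $[\nabla_X,\Lambda]=0$ are equivalent statements, and both rest on the same input, the $J$-invariance of $\omega$.

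What differs is what each route buys:
\begin{itemize}
\item The paper's computation is self-contained and never uses how $\Lambda$ acts on forms. It would work verbatim for the Clifford image of any parallel $2$-form.
\item Your route explains the result conceptually: Tanaka--Webster is a $U(n)$-connection, so it respects the type decomposition.
\item Your route also directly proves that $\nabla$ preserves $\slashed{S}^{odd}$. The paper uses that fact without comment in the proof of Theorem \ref{pmassthm}.
\item The cost of your route is that it depends on (\ref{hardT2}) and on the $\varepsilon\varepsilon$/$\iota\iota$ bookkeeping, which you have not written out. That bookkeeping is standard and does go through.
\end{itemize}

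Your final remark about the downstream consequence has a slip. Since $\Lambda$ is skew-adjoint, $\langle\Lambda\psi,\psi\rangle$ is purely imaginary. Using $[\nabla_T,\Lambda]=0$ and metric compatibility one gets $T\langle\Lambda\psi,\psi\rangle=2\sqrt{-1}\,\im\langle\Lambda\nabla_T\psi,\psi\rangle$, not $2\re\langle\Lambda\nabla_T\psi,\psi\rangle$. It is the imaginary part that becomes a divergence and yields (\ref{imreillyeqn}). This does not affect the proof of the proposition itself.
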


\begin{proof}
Let $X\in T\bar{\Omega}$ and $\psi \in \Gamma(\bar{\Omega},\slashed{S})$. Then we have
\begin{align*}
&\nabla_{X} \Lambda \psi -\Lambda \nabla_{X} \psi \\
=&\sum\limits_{\alpha=1}^{n} c(\nabla_{X}e^{\alpha})c(e^{n+\alpha}) +
c(e^{\alpha})c(\nabla_{X}e^{n+\alpha}) \psi \\
=& \sum\limits_{\alpha=1}^{n} \sum\limits_{j=1}^{2n}
-\omega_{j}{}^{\alpha}(X)c(e^{j})c(e^{n+\alpha})-\omega_{j}{}^{n+%
\alpha}c(e^{\alpha})c(e^{j}) \\
=& -\sum\limits_{\alpha=1}^{n} \sum\limits_{\beta=1, \beta\neq \alpha}^{n} [
\omega_{\beta}{}^{\alpha}(X)c(e^{\beta})c(e^{n+\alpha}) \\
&+ \omega_{n+\beta}{}^{\alpha}(X)c(e^{n+\beta})c(e^{n+\alpha}) \\
&+ \omega_{\beta}{}^{n+\alpha}(X)c(e^{\alpha})c(e^{\beta}) \\
&+ \omega_{n+\beta}{}^{n+\alpha}(X)c(e^{\alpha})c(e^{n+\beta}) ]=0 \\
\end{align*}
The first two equalities follow from the definition while the third equality follows from (\ref{realstreqn}).
\end{proof}

\begin{mycor}
The formula (\ref{imreillyeqn}) always holds.
\end{mycor}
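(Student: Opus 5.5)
We need to show $\oint_\Sigma \alpha\langle\Lambda\psi,\psi\rangle\,d\Sigma = 2\sqrt{-1}\int_\Omega \im\langle\Lambda\nabla_T\psi,\psi\rangle\,dV$. The plan is to rewrite the interior integrand as a divergence using $[\nabla,\Lambda]=0$, and then apply Stokes' theorem.

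First, $\Lambda$ is skew-adjoint: each $c(e^i)$ is skew-adjoint, and $c(e^\beta)$, $c(e^{\beta+n})$ anticommute. Hence $\langle\Lambda\psi,\psi\rangle$ is purely imaginary, consistent with (\ref{hardT2}). By the proposition just proved and metric compatibility of the spinor connection,
\[
T\langle\Lambda\psi,\psi\rangle=\langle\Lambda\nabla_T\psi,\psi\rangle+\langle\Lambda\psi,\nabla_T\psi\rangle
=\langle\Lambda\nabla_T\psi,\psi\rangle-\overline{\langle\Lambda\nabla_T\psi,\psi\rangle}
=2\sqrt{-1}\,\im\langle\Lambda\nabla_T\psi,\psi\rangle .
\]
So the right side of (\ref{imreillyeqn}) equals $\int_\Omega T(f)\,dV$ with $f:=\langle\Lambda\psi,\psi\rangle$.

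Next, since $\mathcal L_T\theta=0$ and $\mathcal L_T d\theta=0$, we have $\mathcal L_T dV=0$. Therefore $T(f)\,dV=\mathcal L_T(f\,dV)=d(f\,T\lrcorner dV)$, and Stokes gives $\int_\Omega T(f)\,dV=\oint_\Sigma f\,T\lrcorner dV$, with the boundary orientation matching the convention $d\Sigma=-e_{2n}\lrcorner dV$ used for (\ref{divformula}).

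The remaining step is to identify $T\lrcorner dV$ restricted to $\Sigma$. Write $T=e_0-\alpha e_{2n}$ with $e_0\in T\Sigma$. Since $e_0$ is tangent to $\Sigma$ and $dV$ is a top form, $e_0\lrcorner dV$ restricted to $\Sigma$ vanishes. Indeed, contracting a top form first with a tangent vector leaves a form that vanishes on any $2n$ vectors of $T\Sigma$, because together with $e_0$ they are linearly dependent. Thus on $\Sigma$,
\[
T\lrcorner dV=-\alpha\,e_{2n}\lrcorner dV=\alpha\,d\Sigma,
\]
which gives exactly $\oint_\Sigma\alpha\langle\Lambda\psi,\psi\rangle\,d\Sigma$.

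The main obstacle is the orientation and sign bookkeeping. The orientation of $\Sigma$ in Stokes' theorem must be the one making $\oint_\Sigma(X\lrcorner dV)$ appear with the same sign as in the derivation of (\ref{divformula}), where the boundary term for the normal field $e_{2n}$ came out as $-\oint\langle\cdot\rangle\,d\Sigma$. I would fix the orientation by that same convention, so the sign for $T$ is forced consistently. Singular points are handled as in Remark \ref{R-4-1}.
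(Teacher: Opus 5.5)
Your proof is correct and is essentially the paper's argument. Since $T\lrcorner dV=c_n(d\theta)^n$, your exact form $f\,T\lrcorner dV$ with $f=\langle\Lambda\psi,\psi\rangle$ is the paper's $c_n\langle\Lambda\psi,\psi\rangle(d\theta)^n$; the paper checks $d(f(d\theta)^n)=T(f)\,\theta\wedge(d\theta)^n$ directly instead of using $\mathcal{L}_T dV=0$, and your boundary step $T=e_0-\alpha e_{2n}$ with $e_0$ tangent is the dual of its substitution $e^{2n}=\alpha\theta$ on $\Sigma$ (your sign $T\lrcorner dV|_\Sigma=\alpha\,d\Sigma$ is right because $e_{2n}$ points into $\Omega$).
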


\begin{proof} We compute
\begin{align*}
&\text{RHS of (\ref{imreillyeqn})} \\
=& \sqrt{-1}\int_{\Omega} 2\im[ \big\langle \Lambda \nabla_{T} \psi, \psi
\big\rangle] dV \\
=& c_{n}\int_{\Omega} (\big\langle \nabla_{T} \Lambda \psi, \psi %
\big\rangle + \big\langle \Lambda \psi, \nabla_{T} \psi \big\rangle) \theta
\wedge (d\theta)^{n} \\
=& c_{n}\int_{\Omega} d( \big\langle \Lambda \psi, \psi \big\rangle %
(d\theta)^{n}) \\
=&  c_{n}\oint_{\Sigma} \big\langle \Lambda \psi, \psi \big\rangle %
(d\theta)^{n} \\
=&  \oint_{\Sigma} \big\langle \Lambda \psi, \psi \big\rangle e^{1}\wedge
e^{2} \wedge ... \wedge e^{2n-1} \wedge e^{2n} \\
=& \oint_{\Sigma} \big\langle \Lambda \psi, \psi \big\rangle e^{1}\wedge
e^{2} \wedge ... \wedge e^{2n-1} \wedge (\alpha \theta) \\
=& \oint_{\Sigma} \alpha \big\langle \Lambda \psi, \psi \big\rangle d\Sigma
= \text{LHS of (\ref{imreillyeqn})}. \\
\end{align*}
\end{proof}

Define the twistor operator by 
\begin{equation*}
P_{X}\psi := \nabla_{X} \psi + \tfrac{1}{2n} c(X)\slashed{D} \psi.
\end{equation*}
Then one has the standard identity 
\begin{equation}  \label{twist}
\|\nabla \psi\|^{2} = \|P\psi\|^{2} + \tfrac{1}{2n} \|\slashed{D}\psi\|^{2}.
\end{equation}
Substituting (\ref{twist}) into \eqref{reillyeqn}, we obtain the inequality (%
\ref{1-2a}) and complete the proof of 2) part of theorem \ref{thm1}.



\section{Pseudohermitian mass: proof of Theorem \protect\ref{pmassthm} and
Corollary \textbf{\protect\ref{pmasscor}\label{Sec5}}}

For the definition of asymptotically flat pseudohermitian manifold and $p$%
-mass, see \cite{chengchiu2022positivemass5}. These notions play an
important role in solving the Yamabe minimizer problem on CR manifolds.

\begin{mypro}
\label{P-5-1} (\cite{chengchiu2022positivemass5}) Let $M$ be an
asymptotically flat pseudohermitian manifold of dimension 5. Then there
exists $\psi $ $\in $ $\Gamma (M,\slashed{S}^{odd})$ ($\slashed{S}^{even}$ in \cite%
{chengchiu2022positivemass5} due to a different way to define the Clifford
action)$,$ satisfying the following properties:\newline
1. $\slashed{D}^{2}\psi =0$ on $M$\newline
2. $\slashed{D}\psi =O(\rho ^{-6+\varepsilon })$ near the asymptotic end\newline
3. $\psi =\psi _{0}+\psi _{-4+\varepsilon }$, where $\psi _{0}$ is a
constant spinor with norm $1$ near the asymptotic end and $\psi
_{-4+\varepsilon }=O(\rho ^{-4+\varepsilon }).$
\end{mypro}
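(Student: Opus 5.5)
The plan is to reduce the problem to a coercive second-order equation on odd spinors and solve it by a variational argument in weighted Folland–Stein spaces, then read off the asymptotics from a Green-function comparison. This follows the Witten-type construction in \cite{chengchiu2022positivemass5}. The whole argument relies on the special dimension $2n+1=5$.

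The first step is to restrict to $\slashed{S}^{odd}$. The operator $\slashed{D}$ interchanges even and odd forms, so $\slashed{D}^{2}$ preserves $\Gamma(M,\slashed{S}^{odd})$. By Lemma \ref{Weitzen} together with (\ref{2-7-1}) (where $n=2$ and $\Lambda=0$ on odd spinors), on odd spinors we have
\begin{equation*}
\slashed{D}^{2}=\nabla^{\ast}\nabla+W .
\end{equation*}
The troublesome $\nabla_{T}$ term, which is not controlled by horizontal derivatives, therefore disappears. This is exactly why one works with $\slashed{S}^{odd}$ and with $\dim M=5$.

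Next, I fix a constant odd spinor $\psi_{0}$ of norm $1$ in the asymptotic Heisenberg coordinates and cut it off near the inner region. I then look for $\psi=\psi_{0}+\eta$ solving
\begin{equation*}
(\nabla^{\ast}\nabla+W)\eta=-\slashed{D}^{2}\psi_{0}=:f .
\end{equation*}
By asymptotic flatness the connection forms decay, so $f=O(\rho^{-6+\varepsilon})$ or better (homogeneous dimension $Q=6$). To solve for $\eta$, I will use the quadratic form $\int(|\nabla\eta|^{2}+W|\eta|^{2})$ on the completion of compactly supported odd spinors, with whatever boundary condition on $\Sigma$ is imposed in \cite{chengchiu2022positivemass5}. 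Coercivity comes from the Folland–Stein Sobolev inequality $\|\eta\|_{L^{3}}\le C\|\nabla\eta\|_{L^{2}}$ (the exponent $2Q/(Q-2)=3$) together with $W\ge 0$. If $W$ is not assumed nonnegative, I would instead use the positivity of the Yamabe-type quadratic form available in that paper. The right-hand side $f$ lies in $L^{3/2}$, so it is a bounded functional, and Lax–Milgram produces a weak solution $\eta$.

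The main obstacle is the decay statement: showing $\eta=O(\rho^{-4+\varepsilon})$ and $\slashed{D}\psi=O(\rho^{-6+\varepsilon})$. For $\eta$, I would first apply local subelliptic (Folland–Stein) regularity to get smoothness. I would then compare with the Green function of the sublaplacian on the Heisenberg group, which decays like $\rho^{2-Q}=\rho^{-4}$, using weighted Schauder estimates or a Moser-iteration barrier argument; this gives $\eta=O(\rho^{-4+\varepsilon})$ with horizontal derivatives one order better. The improved rate for $\slashed{D}\psi$ is the delicate point. For it I would choose $\psi_{0}$ (and, if necessary, a correction term of order $\rho^{-4}$ built from the asymptotic expansion of $J$ and $\theta$) so that the leading term of $\slashed{D}\psi$ cancels. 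The remaining estimate then follows from applying the weighted elliptic estimates to $\slashed{D}\psi$, which satisfies $\slashed{D}(\slashed{D}\psi)=0$.
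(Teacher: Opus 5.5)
The paper gives no proof of this proposition; it is imported from \cite{chengchiu2022positivemass5}, so your outline has to stand on its own. Its first half is fine. On $\slashed{S}^{odd}$ with $n=2$ one has $\slashed{D}^{2}=\nabla^{\ast}\nabla+W$ by Lemma \ref{Weitzen} and (\ref{2-7-1}). With $W\ge 0$, Lax--Milgram, the Folland--Stein Sobolev inequality and a Green-function comparison then give $\psi=\psi_0+\eta$ with $\slashed{D}^2\psi=0$ and $\eta=O(\rho^{-4+\varepsilon})$.

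The gap is property 2. It is the only non-routine part, and it is exactly what Section \ref{Sec5} uses: the term $\oint_{S_K}\langle c(e^{i})\slashed{D}\psi,\psi\rangle e_i\lrcorner dV$ tends to $0$ only because $\rho^{-6+\varepsilon}$ beats the area growth $\rho^{5}$. Your estimates give only $\slashed{D}\psi=O(\rho^{-5+\varepsilon})$, and the proposed upgrade fails for two reasons.

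First, $\phi:=\slashed{D}\psi$ is an \emph{even} spinor. By (\ref{hardT2}), $\Lambda=\mp2\sqrt{-1}$ on $\Lambda^{0,0}$ and $\Lambda^{0,2}$, so the term $-2\Lambda\nabla_T$ of (\ref{2-4a}) that you removed on odd spinors comes back. Consequently $\slashed{D}$ is not subelliptic on $\slashed{S}^{even}$. On the Heisenberg group $\slashed{D}f=\sqrt{2}\,\bar\partial_b f$ for functions, so its kernel contains all CR functions, among them ones decaying like any prescribed power of $\rho$. There are therefore no weighted estimates to apply to $\slashed{D}\phi=0$.

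Second, even for a subelliptic operator, no estimate beats the rate $\rho^{-5}$. The solution $\eta$ will in general contain a term $\Gamma m_0$, where $\Gamma\sim\rho^{-4}$ is the fundamental solution of the sublaplacian and $m_0$ is a constant odd spinor. Then $\slashed{D}(\Gamma m_0)=c(d_b\Gamma)m_0$ is of order $\rho^{-5}$ and, in the Heisenberg model, solves $\slashed{D}\phi=0$ off the origin. The coefficient $m_0$ is fixed by the solution on all of $M$, not by the asymptotic expansion of $(J,\theta)$. So no correction built from that expansion cancels it, and varying the unit spinor $\psi_0$ only changes $m_0$ linearly.

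What is missing is an argument that this $\rho^{-5}$ part of $\slashed{D}\psi$ vanishes. Integrating $\langle\slashed{D}^{2}\psi,\psi\rangle$ writes $\int|\slashed{D}\psi|^{2}$ as a boundary term at infinity. At the rate $\rho^{-5+\varepsilon}$ that term is merely $O(\rho^{\varepsilon})$ and need not tend to zero, so it gives nothing. The Riemannian device is also blocked: there one applies the Lichnerowicz formula to the $L^{2}$ harmonic spinor $D\psi$ and uses $R\ge 0$ to get $D\psi=0$. Here, for even $\phi$, the integrated Weitzenb\"ock formula contains the indefinite term $-2\re\langle\Lambda\nabla_T\phi,\phi\rangle$. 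So your outline yields properties 1 and 3, but not 2.

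Separately, your Lax--Milgram step uses $W\ge 0$, which the quoted statement leaves implicit. Without it one has to exclude decaying odd spinors with $\slashed{D}\eta=0$, and your appeal to positivity of the CR Yamabe functional does not do this as stated.
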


Let $L^{i}:=\delta^{ij}\nabla _{e_{j}}+c(e^{i})\slashed{D}$ and $w:=\langle
\lbrack c(e^{i}),c(e^{j})]\psi ,\varphi \rangle e_{i}\lrcorner
e_{j}\lrcorner dV$.

\begin{mypro}
\label{selfadjpropL} With the above notations, we have 
\begin{equation*}
dw=-4(\langle L^{i}\psi ,\varphi \rangle -\langle \psi ,L^{i}\varphi \rangle
)e_{i}\lrcorner dV+2\overline{\delta }_{ji}\langle \lbrack
c(e^{i}),c(e^{j})]\psi ,\varphi \rangle T\lrcorner dV.
\end{equation*}
\end{mypro}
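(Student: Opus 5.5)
We compute $dw$ directly from Cartan's formula in the local adapted frame $\{T,e_1,\dots,e_{2n}\}$ with coframe $\{\theta,e^1,\dots,e^{2n}\}$. Write $w=\sum_{i,j}f_{ij}\,e_i\lrcorner e_j\lrcorner dV$, where $f_{ij}:=\langle[c(e^{i}),c(e^{j})]\psi,\varphi\rangle$; note that $f_{ij}$ is antisymmetric.

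\textbf{Step 1: the formula for $d(e_i\lrcorner e_j\lrcorner dV)$.} For a $(2n-1)$-form of this type we use the standard identity
\[
d(\iota_{X}\iota_{Y}dV)=\iota_{[X,Y]}dV+(\mathrm{div}Y)\,\iota_XdV-(\mathrm{div}X)\,\iota_YdV .
\]
This follows from $L_X=d\iota_X+\iota_Xd$ together with $d(\iota_YdV)=(\mathrm{div}Y)dV$. We insert (\ref{eiej}), namely $[e_i,e_j]=-2\overline\delta_{ij}T+(\omega_j{}^k(e_i)-\omega_i{}^k(e_j))e_k$, and the divergences $\mathrm{div}\,e_j=-\sum_k\omega_j{}^k(e_k)$. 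The Tanaka--Webster connection preserves $dV$, and $T$ has no $e_j$-component, so $\mathrm{div}\,e_j$ is computed purely from the horizontal part of the connection. The $T$-component of $[e_i,e_j]$, contracted with $f_{ij}$, produces the term $-2\overline\delta_{ij}f_{ij}\,T\lrcorner dV=2\overline\delta_{ji}f_{ij}\,T\lrcorner dV$. This is exactly the second term of the claimed formula.

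\textbf{Step 2: the derivative of the coefficients.} The remaining part of $dw$ is $\sum(e_jf_{ij}-e_if_{ij})\,e_i\lrcorner dV$ plus connection terms. By antisymmetry this reduces to $-2\sum_{i,j}(e_jf_{ij})\,e_i\lrcorner dV$ plus connection terms. We expand $e_jf_{ij}$ using metric compatibility of $\nabla$ and the Leibniz rule $\nabla(c(e^i)\cdot)=c(\nabla e^i)\cdot+c(e^i)\nabla$. The connection terms coming from $\nabla e^i$ and $\nabla e^j$ should cancel exactly against the $\omega$-parts of $[e_i,e_j]$ and of the divergences from Step 1. The cleanest way to see this is to work at a point with a frame for which $\omega(p)=0$, which is permitted because $\omega_i{}^j$ is $\mathfrak{u}(n)$-valued and we can choose a synchronous frame along horizontal directions.

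\textbf{Step 3: identification with $L^i$.} At this point we must show
\[
\sum_j\bigl(\langle[c^i,c^j]\nabla_j\psi,\varphi\rangle+\langle\psi,[c^j,c^i]\nabla_j\varphi\rangle\bigr)\cdot(-2)=-4\bigl(\langle L^i\psi,\varphi\rangle-\langle\psi,L^i\varphi\rangle\bigr),
\]
where $c^i:=c(e^i)$. To prove it, we use $[c^i,c^j]=2c^ic^j+2\delta^{ij}$, which follows from $c^ic^j+c^jc^i=-2\delta^{ij}$. This gives $\sum_j[c^i,c^j]\nabla_j=2c^i\slashed D+2\nabla_i=2L^i$. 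For the $\varphi$ side we use the skew-adjointness of $c^i$, which gives $\langle\psi,[c^j,c^i]\nabla_j\varphi\rangle=-\langle\psi,[c^i,c^j]\nabla_j\varphi\rangle$, up to the ordering identity $[c^i,c^j]^*=[c^j,c^i]$.

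The main obstacle is bookkeeping. We must check that all connection-form terms cancel, and this cancellation is easiest pointwise in a normalized frame. We must also track signs, which are tied to the convention $d\Sigma=-e_{2n}\lrcorner dV$ and to the orientation $\theta\wedge(d\theta)^n$. The $T$-term is the only genuinely CR feature, and it arises solely from the non-integrability of $\xi$.
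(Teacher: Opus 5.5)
Your proposal is correct and follows essentially the paper's route. You expand $dw$ in the adapted frame and get the $T\lrcorner dV$ term from the $T$-component $-2\overline{\delta}_{ij}T$ of $[e_i,e_j]$; this is the same as the paper's $d\theta\wedge(e_i\lrcorner e_j\lrcorner(T\lrcorner dV))$ term. You then discard the connection terms at a point and identify the remainder via $\sum_j[c(e^i),c(e^j)]\nabla_{e_j}=2L^i$ together with skew-adjointness of $c$.

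There are a few slips to fix:
\begin{itemize}
\item The correct identity is $d(\iota_X\iota_Y dV)=\iota_{[X,Y]}dV-(\mathrm{div}\,Y)\,\iota_X dV+(\mathrm{div}\,X)\,\iota_Y dV$.
\item With the paper's convention $\nabla e_j=\omega_j{}^k e_k$, one has $\mathrm{div}\,e_j=+\sum_k\omega_j{}^k(e_k)$.
\end{itemize}
These two sign errors happen to cancel, and both terms vanish anyway in your frame with $\omega(p)=0$. That step is legitimate because both sides of the identity are frame-independent.

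Finally, the intermediate expression in Step 2 should read $\sum_{i,j}\bigl[(e_if_{ij})\,e_j\lrcorner dV-(e_jf_{ij})\,e_i\lrcorner dV\bigr]$.
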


\begin{proof}
    In \cite{chengchiu2022positivemass5}, we have the fact that
    \begin{equation*}
        L^{i}:=\delta^{ij}\nabla _{e_{j}}+c(e^{i})\slashed{D}=\frac{1}{2}[c(e^{i}),c(e^{j})]\nabla_{e_{j}}.
    \end{equation*}
    Then we compute
    \begin{align*}
        d\omega =& d(\langle [c(e^{i}),c(e^{j})]\psi ,\varphi \rangle e_{i}\lrcorner e_{j}\lrcorner dV)\\
        =& e_{k}\langle [ c(e^{i}),c(e^{j})]\psi ,\varphi \rangle e^{k} \w (e_{i}\lrcorner e_{j}\lrcorner dV)\\
        &+ \langle [ c(e^{i}),c(e^{j})]\psi ,\varphi \rangle d(e_{i}\lrcorner e_{j}\lrcorner dV)\\
        =& \langle [c(e^{i}),c(e^{j})] \nabla_{e_{k}}\psi ,\varphi \rangle (\delta_{i}^{k} e_{j}\lrcorner dV -\delta_{k}^{j} e_{i}\lrcorner dV)\\
        &+ \langle [c(e^{i}),c(e^{j})] \psi , \nabla_{e_{k}}\varphi \rangle  (\delta_{i}^{k} e_{j}\lrcorner dV -\delta_{k}^{j} e_{i}\lrcorner dV)\\
        &+ \langle [c(e^{i}),c(e^{j})]\psi ,\varphi \rangle  d\theta \w (e_{i}\lrcorner e_{j}\lrcorner (T\lrcorner dV))\\
        =& -4(\langle L^{i}\psi ,\varphi \rangle -\langle \psi ,L^{i}\varphi \rangle
        )e_{i}\lrcorner dV +2\overline{\delta }_{ji}\langle \lbrack
        c(e^{i}),c(e^{j})]\psi ,\varphi \rangle T\lrcorner dV.
    \end{align*}
\end{proof}

\begin{myrmk}
1. The difference between Riemannian and CR cases is the $\theta$ part
because $d\theta$ matters when $\omega$ is a less than $2n-$ form.\newline
2. If we take the hypersurface frame ($e_{2n}$ is viewed as a CR unit normal
vector field of $\Sigma$), Proposition \ref{selfadjpropL} implies 
\begin{align*}
\oint\limits_{\Sigma} [\langle (L^{2n}-\alpha \Lambda) \psi, \varphi \rangle
- \langle \psi, (L^{2n}-\alpha \Lambda)\varphi \rangle] d\Sigma=0.
\end{align*}
Therefore it implies that $L^{2n}-\alpha \Lambda $ is self-adjoint in $%
L^{2}(\Sigma ,\slashed{S})$.
\end{myrmk}

\begin{myrmk}
In (4.17) of \cite{chengchiu2022positivemass5}, the authors miss the terms $%
\alpha \Lambda$ in self-adjopintness of $L$. However, as $\alpha \Lambda$ is
purely imaginary, it does not affect the real part of the integral there.
\end{myrmk}

By equation (\ref{divformula}), we want to estimate $\re\oint_{S_{K}}
\langle L^{i}\psi,\psi \rangle e_{i}\lrcorner dV$ as $K$ goes to infinity.
We have 
\begin{equation}  \label{pmassinf}
\begin{aligned} &\re \oint\limits_{S_{K}} \langle L^{i}\psi,\psi \rangle
e_{i}\lrcorner dV \\ =&\re \{\oint\limits_{S_{K}} \langle
\nabla_{e_{i}}\psi,\psi \rangle e_{i}\lrcorner dV \\ +& \oint\limits_{S_{K}}
\langle c(e^{i})\slashed{D}\psi,\psi \rangle e_{i}\lrcorner dV \} \end{aligned}
\end{equation}

By Proposition \ref{P-5-1}, we have $\slashed{D}\psi =O(\rho^{-6+\varepsilon})$%
. Thus $\langle c(e^{i})\slashed{D}\psi,\psi \rangle e_{i}\lrcorner dV
=O(\rho^{-1+\varepsilon})$ and $\re\oint_{S_{K}} \langle
c(e^{i})\slashed{D}\psi,\psi \rangle e_{i}\lrcorner dV \rightarrow 0$ as $K$
goes to infinity.

In \cite{chengchiu2022positivemass5}, the authors showed that 
\begin{equation}
\re \{\oint\limits_{S_{K}} \langle \nabla_{e_{i}}\psi,\psi \rangle
e_{i}\lrcorner dV \} \rightarrow cm,
\end{equation}
as $K$ goes to infinity for some positive constant real number $c$, see
equation(4.12),(4.20), Lemma 3.5 and 4.4 in \cite{chengchiu2022positivemass5}%
.

\begin{proof}
\textbf{(of Theorem \ref{pmassthm})} 
By taking the real part of (\ref{divformula}), we have
    \begin{align*} 
        &\re \oint_{\Sigma} \langle c(e^{i}) \slashed{D}\psi +
        \nabla_{e_{i}} \psi, \psi \rangle \, e_{i}\lrcorner dV \\ &= \int_{\Omega}
        \Big[-|\slashed{D} \psi|^{2} + W |\psi|^{2} + |\nabla \psi|^{2} -2 \re
        \big\langle \sum_{\beta=1}^{n} c(e^{\beta })c(e^{\beta+n}) \nabla_{T} \psi,
        \psi \big\rangle \Big] dV. 
    \end{align*}
Take the domain $\Omega _{K}$ in $M,$ bounded by a Heisenberg ball $S_{K}$ in the asymptotic end and an inner boundary $\Sigma$. For asymptotic end part, using equation (\ref{pmassinf}) to get the mass part. For inner part, take a hypersurface frame with inner CR unit normal vector field $e_{2n}$ and use the CR Reilly formula (\ref{reillyeqn}). Then we have
\begin{align} 
        &\int_{\Omega}
        \Big[-|\slashed{D} \psi|^{2} + W |\psi|^{2} + |\nabla \psi|^{2} -2 \re
        \big\langle \sum_{\beta=1}^{n} c(e^{\beta })c(e^{\beta+n}) \nabla_{T} \psi,
        \psi \big\rangle \Big] dV \label{5-2} \\ 
        &= \re \oint_{S_{K}} \langle c(e^{i}) \slashed{D}\psi +
        \nabla_{e_{i}} \psi, \psi \rangle \, e_{i}\lrcorner dV -\oint_{\Sigma} \langle \Tilde{\slashed{D}}^{\Sigma} \psi,
\psi \rangle \, d\Sigma + \frac{1}{2}\oint_{\Sigma} H |\psi|^{2} \, d\Sigma \notag \\
&\rightarrow cm(J,\theta)  -\oint_{\Sigma} \langle \Tilde{\slashed{D}}^{\Sigma} \psi,
\psi \rangle \, d\Sigma + \frac{1}{2}\oint_{\Sigma} H |\psi|^{2} \, d\Sigma \notag
    \end{align}
Also, since we assume $\dim M=5$, $\psi \in \Gamma(\slashed{S}^{odd})$ implies $\Lambda
\nabla _{T}\psi =0$ by (\ref{2-7-1}) as $\nabla _{T}$ leaves $\slashed{S}^{odd}$
invariant. Hence we obtain (\ref{pmassformula}) via (\ref{5-2}).
\end{proof}

\begin{proof}
\textbf{(of Corollary \ref{pmasscor}) }By the assumtion that $\Sigma $ is $p$%
-minimal, we have the $p$-mean curvature $H=0.$ Now (\ref{massineqn}) follows from (\ref{pmassformula}) with the
assumption $W\geq 0$, $H=0$ and the assumption $\slashed{D} \psi =0$ .
\end{proof}

\section{Solving the CR Dirac equation on a domain with boundary\label{Sec6}}

This part is motivated by similar ideas in \cite{farinelli1998spectrum} for
the Riemannian situation. For the CR/subriemannian case, however, we need an
assumption to prove Theorem \ref{T-1-4}.

\begin{myassumpt}
\label{assum} Assume that $\Sigma $ has no singular point. and there is a
transversal $S^{1}-$action $e^{i\vartheta }$ on $\bar{\Omega}$ such that

\begin{itemize}
\item $\frac{\partial}{\partial \vartheta}(e^{i\vartheta} \circ p )=a(p)T
+c^{i}(p)e_{i} =: V$, where $a\neq 0$ for any $p$ in $\bar{\Omega}$.

\item The $S^{1}-$action preserves the boundary and the interior of $\Omega$.

\item The $S^{1}-$action preserves the CR structure.
\end{itemize}
\end{myassumpt}

We note that the standard Clifford (solid) torus in the 3-sphere is an
example.

By assumption \ref{assum}, we can consider the m-th Fourier spinor space 
\begin{equation}
\Gamma _{m}(\overline{\Omega} ,\slashed{S}):=\{\psi \in C^{\infty}(\overline{%
\Omega} ,\slashed{S}):\nabla _{V}\psi =im\psi \}
\end{equation}%
for $m\in \mathbb{Z}$ and%
\begin{equation}
\Gamma _{\leq N}(\overline{\Omega} ,\slashed{S}):=\bigoplus\limits_{|m|\leq
N}\Gamma _{m}(\overline{\Omega} ,\slashed{S}).  \label{6-7}
\end{equation}
Similarly, we define $\Gamma_{\leq N}(\Sigma,\slashed{S})$ on $\Sigma$.

Let $F:=(\sqrt{-1})^{\frac{n}{2}}c(e^{1})...c(e^{2n})$ be the chirality
operator. Then we have 
\begin{equation}
\slashed{D}F +F\slashed{D}=0
\end{equation}
Moreover, using the formula (\ref{moddiraceqnhyper}) for the modified
hypersurface Dirac operator, we have 
\begin{equation}  \label{commuteF}
\Tilde{\slashed{D}}^{\Sigma} F- F\Tilde{\slashed{D}}^{\Sigma}=0
\end{equation}

\begin{mylem}
\label{anticommute} It holds that\newline

$i)$ $\nabla _{X}^{\Sigma }(c(e^{2n})\psi )=c(e^{2n})\nabla _{X}^{\Sigma
}\psi $, \newline

$ii)$ $\Tilde{\slashed{D}}^{\Sigma} (c(e^{2n}) \psi)=-c(e^{2n}) \Tilde{\slashed{D}}%
^{\Sigma} \psi$.
\end{mylem}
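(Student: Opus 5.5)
For $i)$ I would expand $\nabla^{\Sigma}_X$ using Proposition \ref{P-3-1}, $\nabla^{\Sigma}_X=\nabla_X-\tfrac12 c^{\Sigma}(AX)=\nabla_X-\tfrac12 c(AX)c(e^{2n})$. The Clifford action is parallel for the spinor connection, so
\[
\nabla_X(c(e^{2n})\psi)=c(\nabla_X e^{2n})\psi+c(e^{2n})\nabla_X\psi=-c(AX)\psi+c(e^{2n})\nabla_X\psi .
\]
Since $AX\in T^*\Sigma$ is orthogonal to $e^{2n}$, we have $c(AX)c(e^{2n})=-c(e^{2n})c(AX)$ and $c(e^{2n})^2=-1$. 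This gives
\[
-\tfrac12 c(AX)c(e^{2n})c(e^{2n})\psi=\tfrac12 c(AX)\psi,
\qquad
c(e^{2n})\bigl(-\tfrac12 c(AX)c(e^{2n})\psi\bigr)=-\tfrac12 c(AX)\psi .
\]
Collecting terms,
\[
\nabla^{\Sigma}_X(c(e^{2n})\psi)-c(e^{2n})\nabla^{\Sigma}_X\psi=-c(AX)\psi+\tfrac12 c(AX)\psi+\tfrac12 c(AX)\psi=0 .
\]
Alternatively, one can observe that ${}^{\Sigma}\omega$ has no $2n$-components, so $e^{2n}$ is $\nabla^{\Sigma}$-parallel and the compatibility computation of (\ref{cliffchainrule}) applies. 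I would present the direct cancellation above.

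For $ii)$ I would work from the definition $\Tilde{\slashed{D}}^{\Sigma}=\sum_{i'}c^{\Sigma}(e^{i'})\nabla^{\Sigma}_{e_{i'}}+\alpha c^{\Sigma}(e^n)$, not from (\ref{moddiraceqnhyper}). The key algebraic fact is that for $i'\le 2n-1$,
\[
c^{\Sigma}(e^{i'})c(e^{2n})=c(e^{i'})c(e^{2n})c(e^{2n})=-c(e^{2n})c(e^{i'})c(e^{2n})=-c(e^{2n})c^{\Sigma}(e^{i'}),
\]
so $c(e^{2n})$ anticommutes with every $c^{\Sigma}(e^{i'})$. Combining this with $i)$:
\[
c^{\Sigma}(e^{i'})\nabla^{\Sigma}_{e_{i'}}(c(e^{2n})\psi)=c^{\Sigma}(e^{i'})c(e^{2n})\nabla^{\Sigma}_{e_{i'}}\psi=-c(e^{2n})c^{\Sigma}(e^{i'})\nabla^{\Sigma}_{e_{i'}}\psi .
\]
Summing over $i'$ gives $\slashed{D}^{\Sigma}(c(e^{2n})\psi)=-c(e^{2n})\slashed{D}^{\Sigma}\psi$. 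The zeroth-order term $\alpha c^{\Sigma}(e^n)$ anticommutes with $c(e^{2n})$ by the same identity with $i'=n$, which completes $ii)$.

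The only real obstacle is bookkeeping in $i)$: keeping the signs in $c^{\Sigma}(AX)=c(AX)c(e^{2n})$ straight, and checking that $\nabla_X e^{2n}=-AX$ has no $e^{2n}$ component. The latter holds because $\omega_{2n}{}^{2n}=0$ by skew-symmetry in (\ref{realstreqn}). Note also that $X$ ranges over $T\Sigma$, including $e_0$, and $AX$ is defined there via $h$; the computation only uses $AX\perp e^{2n}$, so this causes no difficulty.
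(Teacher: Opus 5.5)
Your proposal is correct and follows the paper's proof. For $i)$ the paper does the same direct expansion $\nabla^{\Sigma}_X=\nabla_X-\tfrac12 c(AX)c(e^{2n})$ with the cancellation $-c(AX)+\tfrac12 c(AX)+\tfrac12 c(AX)=0$; for $ii)$ it expands $\Tilde{\slashed{D}}^{\Sigma}$ from its definition and moves $c(e^{2n})$ through using $i)$ and its anticommutation with each $c^{\Sigma}(e^{i'})$. The paper writes the zeroth-order term as $\alpha c(e^{2n-1})c(e^{2n})$, an indexing slip relative to (\ref{mhDirac}); your $\alpha c^{\Sigma}(e^{n})$ is the consistent form, and the argument is the same either way.
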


\begin{proof}
For the first equality, we have
\begin{align*}
\nabla^{\Sigma}_{X}(c(e^{2n})\psi) =& (\nabla_{X} - \frac{1}{2}
c(AX)c(e^{2n})) (c(e^{2n})\psi) \\
=& (-c(AX) + c(e^{2n})\nabla_{X} +\frac{1}{2} c(AX))\psi \\
=& c(e^{2n})\nabla^{\Sigma}_{X} \psi.
\end{align*}
For the second equality, we have
\begin{align*}
\Tilde{\slashed{D}}^{\Sigma} (c(e^{2n}) \psi) =& (c(e^{i^{\prime
}})c(e^{2n})\nabla^{\Sigma}_{e_{i^{\prime }}} +\alpha
c(e^{2n-1})c(e^{2n}))(c(e^{2n})\psi) \\
=& -c(e^{2n}) \Tilde{\slashed{D}}^{\Sigma} \psi.
\end{align*}
\end{proof}

\begin{mylem}
\label{hyperellipticest} Under the Assumption \ref{assum} and $n\geq 2$, the
modified hypersurface Dirac operator $\Tilde{\slashed{D}}^{\Sigma }$ is a
subelliptic operator on $\Gamma _{\leq N}(\Sigma,\slashed{S})$.
\end{mylem}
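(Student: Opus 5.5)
The strategy is to show that on $\Gamma_{\leq N}(\Sigma,\slashed{S})$ the only direction in which $\Tilde{\slashed{D}}^{\Sigma}$ fails to differentiate, namely the deviation direction $e_{0}=T+\alpha e_{2n}$, is controlled by the $S^{1}$-action. Once that direction is accounted for, a Gårding-type estimate for the horizontal part on $\Sigma$ gives the subelliptic estimate
\begin{equation*}
\|\psi\|_{H^{1/2}} \leq C\big(\|\Tilde{\slashed{D}}^{\Sigma}\psi\|_{L^{2}}+\|\psi\|_{L^{2}}\big),\qquad \psi\in\Gamma_{\leq N}(\Sigma,\slashed{S}),
\end{equation*}
where $C$ may depend on $N$.

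First I compute the principal part of $(\Tilde{\slashed{D}}^{\Sigma})^{2}$ on $\Sigma$. By (\ref{hDirac}) and (\ref{mhDirac}), $\Tilde{\slashed{D}}^{\Sigma}=\sum_{i'=1}^{2n-1}c^{\Sigma}(e^{i'})\nabla^{\Sigma}_{e_{i'}}+\text{zeroth order}$. The Clifford relations $c^{\Sigma}(e^{i'})c^{\Sigma}(e^{j'})+c^{\Sigma}(e^{j'})c^{\Sigma}(e^{i'})=-2\delta_{i'j'}$ follow from (\ref{hyperantisym}) and the definition $c^{\Sigma}(v)=c(v)c(e^{2n})$. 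With them, the square becomes
\begin{equation*}
(\Tilde{\slashed{D}}^{\Sigma})^{2}=-\sum_{i'} e_{i'}^{2}+\tfrac12\sum_{i'<j'}c^{\Sigma}(e^{i'})c^{\Sigma}(e^{j'})[e_{i'},e_{j'}]+\text{lower order}.
\end{equation*}
Next I use (\ref{eiej}) and the structure equations (\ref{hyperstreqn}) to express $[e_{i'},e_{j'}]$ along $\Sigma$. Its $e_{0}$-component is $-2\overline{\delta}_{i'j'}$ for $i',j'\neq n$, coming from $du^{0}$ in (\ref{hyperstreqn}); the $n,2n-1$ pairs give an additional $\alpha$-term, and all remaining components are horizontal on $\Sigma$. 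Thus $(\Tilde{\slashed{D}}^{\Sigma})^{2}=\mathcal{L}+\Lambda^{\Sigma}\nabla_{e_{0}}+R$, where $\mathcal{L}=-\sum e_{i'}^{2}$ is a sum-of-squares operator on $\Sigma$ and $R$ is first order in the $e_{i'}$ with bounded coefficients.

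The second step is to trade $\nabla_{e_{0}}$ for the circle generator. By Assumption \ref{assum}, $V=aT+c^{i}e_{i}$ is tangent to $\Sigma$, because the action preserves the boundary, and $a\neq0$. Writing $V=a e_{0}+\sum_{i'}b^{i'}e_{i'}$ with the $e_{2n}$-component forced to vanish, I get $\nabla_{e_{0}}=a^{-1}(\nabla_{V}-b^{i'}\nabla_{e_{i'}})$. On $\Gamma_{\le N}$ we have $\|\nabla_{V}\psi\|\le N\|\psi\|$. Since the action preserves the CR structure, $\nabla_V$ commutes with the $e_{i'}$-derivatives up to horizontal first-order terms, so the $e_{0}$-term is bounded by $C(N)(\|\psi\|+\|\nabla^{h}\psi\|)$, where $\nabla^{h}$ denotes the horizontal derivatives $e_{i'}$. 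Integrating by parts, using the self-adjointness of $\Tilde{\slashed{D}}^{\Sigma}$ (Lemma \ref{selfadjforhyperdirac}) together with compactness of $\Sigma$, I obtain
\begin{equation*}
\|\nabla^{h}\psi\|^{2}\le \|\Tilde{\slashed{D}}^{\Sigma}\psi\|^{2}+C(N)\big(\|\psi\|\,\|\nabla^{h}\psi\|+\|\psi\|^{2}\big),
\end{equation*}
and absorption gives $\|\nabla^{h}\psi\|\le C(\|\Tilde{\slashed{D}}^{\Sigma}\psi\|+\|\psi\|)$.

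The third step is to get the subelliptic gain from this horizontal estimate. Here $n\ge2$ is used: the span $\xi_{2}$ of $e_{1},\dots,e_{n-1},e_{n+1},\dots,e_{2n-1}$ is nontrivial and $d\theta|_{\xi_{2}}$ is nondegenerate. Hence the $e_{i'}$ together with their brackets span $T\Sigma$, and Hörmander's theorem for $\mathcal{L}$ gives $\|\psi\|_{H^{1/2}}\le C(\|\nabla^{h}\psi\|+\|\psi\|)$. Combining this with the second step yields the claim. Alternatively, on $\Gamma_{\le N}$ one may simply note that the $e_{0}$-derivative is already controlled by step two, which gives full $H^{1}$ control.

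The main obstacle is the second step. The terms $b^{i'}\nabla_{e_{i'}}$ and the commutator of $\nabla_{V}$ with $\Tilde{\slashed{D}}^{\Sigma}$ must really be lower order, which needs the CR-invariance of the action to pass to the spinor connection. There is also the characteristic direction $e_{n}$, whose bracket with $e_{2n-1}$ involves $\alpha$. I would first try to show that $\nabla_{V}$ commutes with $\Tilde{\slashed{D}}^{\Sigma}$ modulo zeroth-order terms by choosing an $S^{1}$-invariant frame, which exists because the action is free near $\Sigma$ and preserves $J$ and $\theta$ up to the conformal factor.
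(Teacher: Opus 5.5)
Your proposal follows essentially the paper's route. The paper also combines the integration-by-parts identity $\int_\Sigma\langle(\Tilde{\slashed{D}}^{\Sigma})^{2}\psi,\psi\rangle\,d\Sigma=\int_\Sigma|\Tilde{\slashed{D}}^{\Sigma}\psi|^{2}\,d\Sigma$ with the Weitzenb\"ock-type formula (\ref{hyperW}), namely a horizontal sum of squares, a $\Lambda^{\Sigma}$-type multiple of $\nabla^{\Sigma}_{e_0}$ and lower-order terms, and controls the $e_0$-direction on $\Gamma_{\leq N}$ via the $S^1$-generator before running the standard elliptic argument. Your decomposition $\nabla_{e_0}=a^{-1}(\nabla_V-b^{i'}\nabla_{e_{i'}})$, with the horizontal part absorbed, is more careful than the paper's bare claim $\|\nabla_T\psi\|_{L^2}\leq C(N)\|\psi\|_{L^2}$, and two small slips in your bracket computation do not affect the argument: the coefficient in front of $\sum_{i'<j'}$ should be $1$, not $\tfrac12$, and since $u^{0}$ vanishes on every $e_{i'}$, only the pairs $(\beta,\beta+n)$ with $\beta\leq n-1$ produce an $e_0$-component, with no extra $\alpha$-term.
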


\begin{proof}
By integrating the divergence of the one form $A= \langle c^{\Sigma}(e^{i^{\prime }})\Tilde{\slashed{D}}%
^{\Sigma}\psi, \psi\rangle e^{i^{\prime }}$, we have 
\begin{equation}
\int\limits_{\Sigma} \langle (\Tilde{\slashed{D}}^{\Sigma})^{2} \psi, \psi
\rangle d\Sigma=\int\limits_{\Sigma} |\Tilde{\slashed{D}}^{\Sigma}\psi|^{2}
d\Sigma.
\end{equation}
And the Weitzenb\"{o}ck-type formula for the modified hypersurface Dirac operator is of the type
\begin{equation}\label{hyperW}
    (\hyperdirac)^{2}=-\hyperconn_{e_{\ip}}\hyperconn_{e_{\ip}}-2\sum\limits_{\ip}\hyperc(e^{\ip})\hyperc(e^{\ip +n})\hyperconn_{e_{0}}+A^{\ip}\hyperconn_{e_{\ip}}+B,
\end{equation}
where $A^{i}$ and $B$ are the endomorphism from $\slashed{S}^{\Sigma}$ to $\slashed{S}^{\Sigma}$. The derivation of the equation (\ref{hyperW}) is similar to that of  Lemma \ref{Weitzen}. Since any spinor $\psi$ on the space $\Gamma _{\leq N}(\Omega ,\slashed{S})$ has the property $|\nabla_{T}\psi|_{L^{2}}\leq C(N) |\psi|_{L^{2}}$, which can control the $\hyperconn_{e_{0}}$ part. Using this property and the standard argument for elliptic operator, we have an subelliptic estimate.
\end{proof}

By Lemma \ref{selfadjforhyperdirac}, the modified hypersurface Dirac
operator $\Tilde{\slashed{D}}^{\Sigma }$ is self-adjoint in $L^{2}(\Sigma
,\slashed{S})$. Together with the subellipticity by Lemma \ref{hyperellipticest}%
, $\Tilde{\slashed{D}}^{\Sigma }$ has the discrete spectrum as $\Sigma $ is
compact with no boundary. Let $(\psi _{k})$ be the spectral resolution of $%
\Tilde{\slashed{D}}^{\Sigma }$, i.e. $\Tilde{\slashed{D}}^{\Sigma }\psi
_{k}=\lambda _{k}\psi _{k}$. By $ii)$ of Lemma \ref{anticommute}, $%
c(e^{2n})\psi _{k}$ is an eigenspinor with the eigenvalue $-\lambda _{k}$.
So we know $\Tilde{\slashed{D}}^{\Sigma }$ has the spectrum 
\begin{equation}
...\leq \lambda _{-k}\leq ...\leq \lambda _{-1}<0<\lambda _{1}\leq ...\leq
\lambda _{k}\leq ...
\end{equation}%
where $\lambda _{-k}=-\lambda _{k}$ for all $k\in \mathbb{N}$.

Suppose there is the zero eigenvalue. Then we let $\Tilde{\slashed{D}}%
_{\varepsilon }^{\Sigma }:=\Tilde{\slashed{D}}^{\Sigma }+\varepsilon F$. Since
the operator $\Tilde{\slashed{D}}^{\Sigma }$ commutes with $F$ by equation (\ref%
{commuteF}), so there exists an arbitrarily small $\varepsilon >0$ such that 
$\ker (\Tilde{\slashed{D}}_{\varepsilon }^{\Sigma })=\{0\}$ as the spectrum of $%
\Tilde{\slashed{D}}^{\Sigma }$ is discrete. So we can assume there is no zero
eigenvalue of $\Tilde{\slashed{D}}^{\Sigma }$ (by a small perturbation).

\begin{mydef}
\label{D-6-z} Define the following APS (Atiyah-Patodi-Singer) spaces 
\begin{equation}
\Gamma _{+}^{APS}:=\{\psi \in C^{\infty }(\Sigma ,\slashed{S}|_{\Sigma }):\psi
=\sum\limits_{\lambda _{k}>0}c_{k}\psi _{k}\}
\end{equation}%
and 
\begin{equation}
\Gamma _{-}^{APS}:=\{\psi \in C^{\infty }(\Sigma ,\slashed{S}|_{\Sigma }):\psi
=\sum\limits_{\lambda _{k}<0}c_{k}\psi _{k}\}
\end{equation}

Let $\pi _{\pm }:C^{\infty }(\Sigma,\slashed{S})\rightarrow \Gamma _{\pm
}^{APS}$ denote the map obtained by projecting it onto $\Gamma _{\pm }^{APS}$ in $L^{2}$ .
\end{mydef}

Moreover, using the m-th Fourier spinor space, we define the APS condition
on $\Gamma _{\leq N}(\Sigma,\slashed{S})$ by 
\begin{equation}
\Gamma _{+,\leq N}^{APS}:=\{\psi \in \Gamma _{\leq N}(\Sigma
,\slashed{S}):\pi _{+}(\psi )=\psi \}
\end{equation}%
and 
\begin{equation}
\Gamma _{-,\leq N}^{APS}:=\{\psi \in \Gamma _{\leq N}(\Sigma
,\slashed{S}):\pi _{-}(\psi )=\psi \}
\end{equation}

\begin{myrmk}\label{notationrmk}
    For a section $\psi \in C^{\infty}(\overline{\Omega}, \slashed{S})$, the notation $\psi \in \Gamma_{\pm, \leq N}^{APS}$ means $\psi|_{\Sigma} \in \Gamma_{\pm, \leq N}^{APS}$ and the space $C^{\infty}(\overline{\Omega},\slashed{S}) \cap \Gamma_{\pm, \leq N}^{APS}$ means $\{\psi \in C^{\infty}(\overline{\Omega},\slashed{S}) :  \psi|_{\Sigma} \in \Gamma_{\pm, \leq N}^{APS}\}$.
\end{myrmk}

\begin{mypro}
It holds that (for the notation $C^{\infty}(\overline{\Omega},\slashed{S}) \cap \Gamma^{APS}_{\pm, \leq N}$, see Remark \ref{notationrmk}.)
\begin{equation*}
\slashed{D}: C^{\infty}(\overline{\Omega},\slashed{S}) \cap \Gamma^{APS}_{\pm,
\leq N} \rightarrow
C^{\infty}(\overline{\Omega},\slashed{S}) \cap \Gamma^{APS}_{\pm, \leq N}.
\end{equation*}

\end{mypro}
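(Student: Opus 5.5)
The statement has two parts: preserving the Fourier truncation $\Gamma_{\leq N}$, and preserving the APS condition on the boundary restriction. I would treat them in that order. The first part is essentially formal. The second is where the real work lies, and I am not sure it can be closed as stated.

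\textbf{Fourier truncation.} By Assumption \ref{assum}, the $S^{1}$-action preserves the CR structure and $\overline{\Omega}$. I would also assume it preserves $\theta$ and the spin structure on $\xi$; this is not stated explicitly in Assumption \ref{assum} and would have to be added or justified. Under that assumption the flow of $V$ preserves the Levi metric, the Tanaka--Webster connection and the Clifford action. The induced action on $\slashed{S}$ would then commute with $\slashed{D}$, and differentiating in $\vartheta$ gives $[\nabla_{V},\slashed{D}]=0$ on $C^{\infty}(\overline{\Omega},\slashed{S})$. One caveat: $\nabla_{V}$ on spinors differs from the spinorial Lie derivative $\mathcal{L}_{V}$ by a zeroth-order Clifford term built from $\nabla V$. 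If the paper's $\Gamma_{m}$ is meant with $\mathcal{L}_{V}$, the argument goes through directly; with $\nabla_{V}$ one must check that this correction term also commutes with $\slashed{D}$. Granting this, $\nabla_{V}\psi=im\psi$ implies $\nabla_{V}\slashed{D}\psi=im\,\slashed{D}\psi$. So $\slashed{D}$ maps $\Gamma_{m}(\overline{\Omega},\slashed{S})$ into itself, and hence $\Gamma_{\leq N}$ into itself. The same invariance restricted to $\Sigma$ shows that $\Tilde{\slashed{D}}^{\Sigma}$ commutes with $\nabla_{V}|_{\Sigma}$. Its eigenspinors $\psi_{k}$ can therefore be chosen inside the Fourier modes, and $\pi_{\pm}$ preserve $\Gamma_{\leq N}(\Sigma,\slashed{S})$. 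Smoothness up to the boundary is clear, since $\slashed{D}$ is a differential operator with smooth coefficients.

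\textbf{APS condition.} I would restrict to $\Sigma$ and use (\ref{moddiraceqnhyper}), solved for $\slashed{D}$ with $c(e^{2n})^{2}=-1$:
\begin{equation*}
(\slashed{D}\psi)|_{\Sigma}=c(e^{2n})\Big(\Tilde{\slashed{D}}^{\Sigma}\psi+\nabla_{e_{2n}}\psi-\tfrac{1}{2}H\psi-\alpha\Lambda\psi\Big).
\end{equation*}
If $\psi|_{\Sigma}\in\Gamma^{APS}_{+,\leq N}$, then $\Tilde{\slashed{D}}^{\Sigma}\psi|_{\Sigma}$ lies in $\Gamma^{APS}_{+}$, because $\Tilde{\slashed{D}}^{\Sigma}$ preserves its own spectral subspaces. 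By ii) of Lemma \ref{anticommute}, $c(e^{2n})$ interchanges $\Gamma^{APS}_{+}$ and $\Gamma^{APS}_{-}$. So the term $c(e^{2n})\Tilde{\slashed{D}}^{\Sigma}\psi$ lands in $\Gamma^{APS}_{-}$, not $\Gamma^{APS}_{+}$. Consequently, for the claimed mapping property to hold, the remaining terms $c(e^{2n})(\nabla_{e_{2n}}-\tfrac12 H-\alpha\Lambda)\psi$ would have to supply a $\Gamma^{APS}_{+}$-component that cancels this one, or the membership would have to be reinterpreted.

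I would attempt this by showing that $\nabla_{e_{2n}}$, $H$ and $\alpha\Lambda$ (the latter using $[\nabla,\Lambda]=0$ from the Proposition in Section \ref{Sec4}) commute with $\Tilde{\slashed{D}}^{\Sigma}$ on $\Gamma_{\leq N}$, and then tracking how $c(e^{2n})$ moves each piece between the spectral subspaces.

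\textbf{Main obstacle.} The main obstacle is exactly this APS step. The boundary value $(\slashed{D}\psi)|_{\Sigma}$ involves the normal derivative $\nabla_{e_{2n}}\psi$, which is not determined by $\psi|_{\Sigma}$. Even with the commutation just described, the displayed formula and Lemma \ref{anticommute}~ii) place the $\Tilde{\slashed{D}}^{\Sigma}$-contribution in the opposite subspace. I therefore expect that a literal proof of $\Gamma^{APS}_{\pm}\to\Gamma^{APS}_{\pm}$ is not available for arbitrary smooth $\psi$, and that this step may fail as stated. If the cancellation cannot be established, the statement has to be understood in a weaker form: either as the mapping property on Fourier modes only, or as $\Gamma^{APS}_{+}\to\Gamma^{APS}_{-}$ via the $c(e^{2n})$ factor. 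That weaker form is what the Green-formula argument for Theorem \ref{T-1-4} actually needs.
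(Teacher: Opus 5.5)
Your treatment of the Fourier truncation is the same as the paper's proof. The paper's entire argument is that CR-invariance of the action gives $[\nabla_{V},c(e^{i})]=0$ and $[\nabla_{V},\nabla_{e_{i}}]=0$, so $\slashed{D}$ commutes with $\nabla_{V}$ and preserves each $\Gamma_{m}(\overline{\Omega},\slashed{S})$. Your caveats (invariance of $\theta$, lifting the action to spinors, $\nabla_{V}$ versus the spinorial Lie derivative) are hypotheses that this one-line argument leaves implicit.

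The paper says nothing about the APS condition, and your doubt there is justified. Your last observation, that $(\nabla_{e_{2n}}\psi)|_{\Sigma}$ is not determined by $\psi|_{\Sigma}$, is the decisive one. It turns directly into a counterexample, so ``may fail'' becomes ``fails''. Let $f$ be an $S^{1}$-invariant defining function of $\Sigma$; you get one by averaging any defining function over the action, which preserves $\Omega$ and $\Sigma$. Take $\eta\in\Gamma_{m}(\overline{\Omega},\slashed{S})$ with $|m|\leq N$ and $\eta|_{\Sigma}\neq 0$, and put $\psi=f\eta\in\Gamma_{m}(\overline{\Omega},\slashed{S})$. Since $f=0$ on $\Sigma$ and $e_{1},\dots,e_{2n-1}$ are tangent to $\Sigma$,
\begin{equation*}
(\slashed{D}\psi)|_{\Sigma}=\sum_{i=1}^{2n}(e_{i}f)\,c(e^{i})\eta|_{\Sigma}=(e_{2n}f)\,c(e^{2n})\eta|_{\Sigma}\neq 0.
\end{equation*}
This is nonzero because $e_{2n}\notin T\Sigma$ ($\Sigma$ has no singular points) and $c(e^{2n})^{2}=-1$. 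Yet $\psi|_{\Sigma}=0$ lies in both $\Gamma^{APS}_{+,\leq N}$ and $\Gamma^{APS}_{-,\leq N}$. The proposition asserts both signs, so it would force $(\slashed{D}\psi)|_{\Sigma}\in\Gamma^{APS}_{+}\cap\Gamma^{APS}_{-}=\{0\}$, a contradiction. (When $e_{2n}f\equiv 1$ on $\Sigma$ and $\eta|_{\Sigma}$ is an eigenspinor of $\Tilde{\slashed{D}}^{\Sigma}$, Lemma \ref{anticommute} ii) even puts $(\slashed{D}\psi)|_{\Sigma}$ in the opposite APS space, so each sign fails on its own.)

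Under the literal reading of Remark \ref{notationrmk}, which constrains only $\psi|_{\Sigma}$, even the $\leq N$ part fails, since the normal derivative may then carry arbitrary Fourier modes. Both you and the paper tacitly assume $\psi\in\Gamma_{\leq N}(\overline{\Omega},\slashed{S})$.

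Two points of your proposal need revising. First, the repair you sketch, making $\nabla_{e_{2n}}$, $H$ and $\alpha\Lambda$ commute with $\Tilde{\slashed{D}}^{\Sigma}$, cannot succeed. $\nabla_{e_{2n}}$ is not an operator on boundary spinors at all, and the example above is exactly a variation of the normal derivative with $\psi|_{\Sigma}$ held fixed.

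Second, your fallback ``$\Gamma^{APS}_{+}\to\Gamma^{APS}_{-}$'' is also false for $\slashed{D}$, for the same reason. With $\psi|_{\Sigma}=0\in\Gamma^{APS}_{+}$, the boundary value $(e_{2n}f)\,c(e^{2n})\eta|_{\Sigma}$ is essentially at your disposal through $\eta|_{\Sigma}$ and need not lie in $\Gamma^{APS}_{-}$. What is true is that $c(e^{2n})$, not $\slashed{D}$, interchanges the two spaces (Lemma \ref{anticommute} ii)), and that is what kills the boundary term in \eqref{selfadjD}.

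The defensible content of the proposition is therefore only that $\slashed{D}$ maps $C^{\infty}(\overline{\Omega},\slashed{S})\cap\Gamma_{\leq N}(\overline{\Omega},\slashed{S})$ into itself. That is also all that is used later, e.g.\ to get $\phi-\slashed{D}\rho\in L^{2}_{\leq N}$ in the proof of Theorem \ref{T-1-4}. The APS condition enters only as a condition on the domain, never as a space preserved by $\slashed{D}$.
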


\begin{proof}
As the action preserves the CR structure, we have $[\nabla_{V}, c(e^{i})]=0$
and $[\nabla_{V},\nabla_{e_{i}}]=0$. Hence the proposition follows.
\end{proof}

\begin{mylem}
\label{ellipticest} Under the local boundary condition $\psi \in
C^{\infty}(\overline{\Omega},\slashed{S}) \cap\Gamma^{APS}_{-,\leq N}$, the Dirac operator $\slashed{D}$ has a subelliptic
estimate, i.e. for any $\delta>0$ and $N$ large enough, there exists $%
C_{\delta}=C_{\delta}(N,n)>0$ such that 
\begin{equation}  \label{ellipticformula}
||\psi||^{2}_{H^{1}_{FS}} \leq (1+\delta) ||\slashed{D} \psi||^{2}_{L^{2}} +
C_{\delta} ||\psi||^{2}_{L^{2}}
\end{equation}
for all $\psi \in C^{\infty}(\overline{\Omega},\slashed{S}) \cap\Gamma^{APS}_{+,\leq N}$
, where $\|\cdot\|_{H^{1}_{FS}}$ denotes the first-order Folland--Stein norm.
\end{mylem}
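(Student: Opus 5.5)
We will derive the estimate from the Reilly formula (\ref{reillyeqn}) of Theorem \ref{thm1}, rewritten as an expression for $\|\nabla\psi\|^2_{L^2(\Omega)}$:
\begin{equation*}
\int_\Omega |\nabla\psi|^2\,dV=\int_\Omega\Big[|\slashed{D}\psi|^2-W|\psi|^2+2\re\langle\Lambda\nabla_T\psi,\psi\rangle\Big]dV+\oint_\Sigma\langle\Tilde{\slashed{D}}^{\Sigma}\psi,\psi\rangle\,d\Sigma-\frac12\oint_\Sigma H|\psi|^2\,d\Sigma .
\end{equation*}
The first-order Folland--Stein norm is controlled by $\|\psi\|^2_{L^2}+\sum_{i\le 2n}\|\nabla_{e_i}\psi\|^2_{L^2}\le\|\psi\|^2_{L^2}+\|\nabla\psi\|^2_{L^2}$. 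Here we regard $\nabla\psi$ as the horizontal covariant derivative, as in Lemma \ref{Weitzen}. So it suffices to bound each term on the right appropriately.

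\textbf{Interior terms.} The curvature term is harmless: $|\int W|\psi|^2|\le C\|\psi\|^2_{L^2}$, since $\overline\Omega$ is compact. For the torsion-type term we use the $S^1$-action. Assumption \ref{assum} gives $V=aT+c^ie_i$ with $a\neq0$ on $\overline\Omega$. Hence $\nabla_T\psi=a^{-1}(\nabla_V\psi-c^i\nabla_{e_i}\psi)$. On $\Gamma_{\le N}$ we have $\|\nabla_V\psi\|\le N\|\psi\|$, so
\[
2|\langle\Lambda\nabla_T\psi,\psi\rangle|\le C N\|\psi\|^2+C\|\nabla\psi\|\,\|\psi\|\le \tfrac{\delta'}{2}\|\nabla\psi\|^2+C_{\delta'}(N)\|\psi\|^2 .
\]
The $\delta'$ part is absorbed into the left side. (When $n=2$ and we work on odd spinors, $\Lambda=0$ by (\ref{2-7-1}) and this term simply vanishes.)

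\textbf{Boundary terms.} This is where the APS condition enters. Expand $\psi|_\Sigma=\sum c_k\psi_k$ in eigenspinors of $\Tilde{\slashed{D}}^{\Sigma}$. If $\psi|_\Sigma$ lies in the negative APS space, then $\oint\langle\Tilde{\slashed{D}}^{\Sigma}\psi,\psi\rangle=\sum_{\lambda_k<0}\lambda_k|c_k|^2\le0$, and this term can be dropped. The statement of the lemma mixes $\Gamma^{APS}_{-}$ and $\Gamma^{APS}_{+}$; the sign that makes the argument work is the one for which this boundary pairing is nonpositive. We therefore take the condition that gives $\le0$, possibly after replacing $e_{2n}$ by its opposite or using $c(e^{2n})$, which swaps the two spaces by Lemma \ref{anticommute}.

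The remaining boundary term $-\frac12\oint H|\psi|^2$ is bounded by $C\|\psi\|^2_{L^2(\Sigma)}$, with $\Sigma$ nonsingular and compact. We then need a trace inequality of the form
\[
\|\psi\|^2_{L^2(\Sigma)}\le \eta\|\nabla\psi\|^2_{L^2(\Omega)}+C_\eta\|\psi\|^2_{L^2(\Omega)} .
\]
To prove it, apply the divergence theorem to $|\psi|^2Y$, where $Y$ is a smooth horizontal extension of $e_{2n}$ (or of $N$). Since $\Sigma$ is nonsingular, $\langle N,e_{2n}\rangle>0$, so the horizontal normal is transversal to $\Sigma$. The divergence produces $2\re\langle\nabla_Y\psi,\psi\rangle+(\mathrm{div}Y)|\psi|^2$, and Cauchy--Schwarz with a parameter finishes the bound. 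I expect this trace step, together with justifying the horizontal-only derivative bound, to be the main technical point. Non-characteristic boundary (no singular points) is exactly what makes it work.

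\textbf{Conclusion.} Combining the estimates gives
\[
(1-\delta'-\eta C)\|\nabla\psi\|^2\le\|\slashed{D}\psi\|^2+C\|\psi\|^2 .
\]
Choosing $\delta',\eta$ small relative to $\delta$ and adding $\|\psi\|^2_{L^2}$ yields (\ref{ellipticformula}), with $C_\delta$ depending on $N$ through the $\nabla_T$ term.
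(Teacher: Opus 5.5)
Your proposal is correct and is essentially the paper's proof. Both arguments solve the Reilly formula (\ref{reillyeqn}) for $\int_\Omega|\nabla\psi|^2$, use the $S^1$-action to write $\nabla_T=a^{-1}(\nabla_V-c^i\nabla_{e_i})$ with $\|\nabla_V\psi\|\le N\|\psi\|$, and drop $\oint_\Sigma\langle\Tilde{\slashed{D}}^{\Sigma}\psi,\psi\rangle\,d\Sigma\le 0$ via the negative APS condition. The $\Gamma^{APS}_{-,\le N}$ reading you settle on is exactly what the paper's proof uses, so no swapping via $c(e^{2n})$ is needed. Both then bound the $H$-term by a trace inequality and absorb.

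The only real difference is the trace step. You prove the $L^2$ trace inequality directly, via the divergence theorem for $|\psi|^2Y$ with $Y$ a horizontal extension of $e_{2n}$; this works precisely because $\Sigma$ has no characteristic points. The paper instead cites the pseudohermitian trace theorem of \cite{bahouri2009trace}. Your route is more elementary and self-contained.
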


\begin{proof}
By Assumption \ref{assum}, we have the following estimate:
\begin{align}
    &\int\limits_{\Omega} \re\langle \Lambda \nabla_{T} \psi,\psi \rangle dV \label{6-11a}\\
    =&\int\limits_{\Omega} \re[ \big\langle \Lambda \nabla_{a^{-1}(V-c^{i}e_{i})} \psi, \psi \big\rangle] dV \notag\\
    \leq& a^{-1}N||\psi||^{2}_{L^{2}(\Omega)} + |c|(\varepsilon_{0} ||\psi||^{2}_{H^{1}_{FS}(\Omega)}+C_{\varepsilon_{0}}||\psi||^{2}_{L^{2}(\Omega)}) \notag\\
    \leq & \varepsilon_{1} ||\psi||^{2}_{H^{1}_{FS}(\Omega)}+(a^{-1}N+C_{\varepsilon_{1}})||\psi||^{2}_{L^{2}(\Omega)} \notag
\end{align}

Via the CR Reilly formula (\ref{reillyeqn}), we have

\begin{align*}
&||\psi||_{H^{1}_{FS}}^{2}=\int_{\Omega} |\nabla \psi|^{2} dV \\
=& \oint_{\Sigma} \langle \Tilde{\slashed{D}}^{\Sigma} \psi, \psi \rangle \,
d\Sigma - \frac{1}{2}\oint_{\Sigma} H |\psi|^{2} \, d\Sigma \\
&+ \int_{\Omega} \Big[ |\slashed{D} \psi|^{2} - W |\psi|^{2} + 2\re[
\big\langle \Lambda \nabla_{T} \psi, \psi \big\rangle] \Big] dV \\
\leq & -\frac{1}{2}\oint_{\Sigma} H |\psi|^{2} \, d\Sigma + \int_{\Omega} %
\Big[ |\slashed{D} \psi|^{2} - W |\psi|^{2} + 2\re[ \big\langle \Lambda
\nabla_{T} \psi, \psi \big\rangle] \Big] dV \\
\leq & \varepsilon_{2} ||\psi||_{H^{1}_{FS}}^{2} + ||\slashed{D} \psi||_{L^{2}}^{2}+
C_{1}||\psi||_{L^{2}}^{2} + \int_{\Omega} 2\re[ \big\langle \Lambda
\nabla_{a^{-1}(V-c^{i}e_{i})} \psi, \psi \big\rangle] dV \\
\leq& (\varepsilon_{2}+2\varepsilon_{1})||\psi||_{H^{1}_{FS}}^{2} + ||\slashed{D} \psi||^{2}_{L^{2}}+
(2C_{\varepsilon_{1}}+C_{1}+2a^{-1}N)||\psi||^{2}_{L^{2}}   \\
\leq& \epsilon||\psi||_{H^{1}_{FS}}^{2}+ ||\slashed{D} \psi||^{2}_{L^{2}}
+(2a^{-1}N+C)||\psi||_{L^{2}}
\end{align*}

For the inequality $\oint H|\psi|^{2} d\Sigma \leq \varepsilon_{2}
||\psi||_{H^{1}_{FS}}^{2}$, we use the pseudohermitian trace theorem, see \cite%
{bahouri2009trace}. We have also used (\ref{6-11a}) to estimate the term involving $\big\langle \Lambda \nabla_{T} \psi, \psi \big\rangle $. Taking the suitable $\epsilon$, we get the inequality (\ref{ellipticformula}).
\end{proof}



We denote by $L^{2}\Gamma _{+}^{APS}$ (resp. $L^{2}\Gamma _{\pm ,\leq
N}^{APS})$ the $L^{2}$-completion of $\Gamma _{+}^{APS}$ (resp. $\Gamma
_{\pm ,\leq N}^{APS})$. Similar notations applyto other spaces.

\begin{mythm}
Fixed $N>0$. Let $(\Omega ,\Sigma =\partial \Omega ,L_{\theta
},c,\slashed{S},\nabla )$ be the spinor bundle with chirality $F$. Then the CR
Dirac operator $\Tilde{\slashed{D}}^{\Sigma }$ extends to a self-adjoint linear
operator on $L^{2}(\Omega ,\slashed{S})$ with domain 
\begin{equation}
D_{-,N}(\slashed{D}):=\{\psi \in H^{1}_{FS}(\Omega ,\slashed{S})\cap L^{2}\Gamma _{\leq
N}(\overline{\Omega },\slashed{S}):\psi|_{\Sigma} \in L^{2}\Gamma _{-,\leq N}^{APS}\}.
\end{equation}
\end{mythm}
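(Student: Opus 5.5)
I read the operator in the statement as the CR Dirac operator $\slashed{D}$ acting on $L^{2}(\Omega ,\slashed{S})$, since its domain $D_{-,N}(\slashed{D})$ consists of spinors on $\Omega$. The plan is the Riemannian scheme of Farinelli--Schwarz: establish symmetry, then closedness, then show the adjoint has the same domain.

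\emph{Step 1: Green formula and symmetry.} Integrating the divergence of the one-form $X\mapsto \langle c(X^{*})\psi ,\varphi \rangle$ over $\Omega$, as was done for $w_{1}$ in Section \ref{Sec4}, gives
\begin{equation*}
\llangle\slashed{D}\psi ,\varphi \rrangle-\llangle\psi ,\slashed{D}\varphi \rrangle=\pm \oint_{\Sigma }\langle c(e^{2n})\psi ,\varphi \rangle \,d\Sigma .
\end{equation*}
Here the $T$-direction contributes nothing to the boundary term, exactly as in (\ref{divformula}). By ii) of Lemma \ref{anticommute}, $c(e^{2n})$ maps eigenspinors of $\Tilde{\slashed{D}}^{\Sigma }$ with eigenvalue $\lambda _{k}$ to eigenspinors with eigenvalue $-\lambda _{k}$. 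Since we have arranged, after the perturbation by $\varepsilon F$, that there is no zero eigenvalue, $c(e^{2n})$ interchanges $\Gamma _{-}^{APS}$ and $\Gamma _{+}^{APS}$. The map $c(e^{2n})$ also preserves the Fourier level $\leq N$, because the $S^{1}$-action preserves the CR structure and hence commutes with Clifford multiplication. So for $\psi ,\varphi \in D_{-,N}(\slashed{D})$ we have $c(e^{2n})\psi \in L^{2}\Gamma _{+}^{APS}$, which is $L^{2}$-orthogonal to $\varphi |_{\Sigma }\in L^{2}\Gamma _{-}^{APS}$. The boundary term therefore vanishes, and $\slashed{D}$ is symmetric on $D_{-,N}(\slashed{D})$. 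I would prove this for smooth spinors first and extend to the domain by density, using the trace theorem of \cite{bahouri2009trace}.

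\emph{Step 2: closedness.} The subelliptic estimate of Lemma \ref{ellipticest} (in its $\Gamma _{-,\leq N}^{APS}$ version) shows that the graph norm $\|\psi \|_{L^{2}}+\|\slashed{D}\psi \|_{L^{2}}$ is equivalent to the $H^{1}_{FS}$ norm on $D_{-,N}(\slashed{D})$. The trace map $H^{1}_{FS}(\Omega )\to L^{2}(\Sigma )$ is continuous, and $L^{2}\Gamma _{-,\leq N}^{APS}$ and $L^{2}\Gamma _{\leq N}$ are closed subspaces. Hence $D_{-,N}(\slashed{D})$ is complete in the graph norm, so $\slashed{D}$ is closed on it.

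\emph{Step 3: the adjoint domain.} Let $\varphi$ lie in the domain of the adjoint within the $\leq N$ Fourier sector. Testing against compactly supported $\psi$ shows that $\slashed{D}\varphi \in L^{2}$ weakly. Testing against general $\psi \in D_{-,N}(\slashed{D})$ then forces $\oint \langle c(e^{2n})\psi ,\varphi \rangle =0$ for all $\psi |_{\Sigma }\in \Gamma _{-,\leq N}^{APS}$. This says $c(e^{2n})^{*}\varphi |_{\Sigma }\perp \Gamma _{+}$, which gives $\varphi |_{\Sigma }\in L^{2}\Gamma _{-,\leq N}^{APS}$. The remaining issue is regularity: showing $\varphi \in H^{1}_{FS}$.

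\emph{Main obstacle: the regularity in Step 3.} The difficulty is that the boundary trace is only defined in a weak (negative Folland--Stein) sense a priori. I would first split $\varphi$ into interior and boundary pieces with a cutoff. Interior regularity follows from the subellipticity of $\slashed{D}$ on the $\leq N$ Fourier modes, since $\nabla _{T}$ is controlled by $N$ as in (\ref{6-11a}). Near $\Sigma$, I would use tangential mollification (difference quotients along $\Sigma$ that commute with the $S^{1}$-action and with $\pi _{\pm }$ up to lower order) to approximate $\varphi$ by elements of $D_{-,N}(\slashed{D})$. Applying the a priori estimate (\ref{ellipticformula}) uniformly to these approximants then yields $\varphi \in H^{1}_{FS}$. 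The nonlocal projection $\pi _{-}$ makes commuting mollifiers with the boundary condition delicate. Failing that, I would construct the solution operator spectrally: expand in the $\Tilde{\slashed{D}}^{\Sigma }$-eigenbasis near the collar and solve $\slashed{D}\varphi =f$ mode by mode, as in the APS construction.
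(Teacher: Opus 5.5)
Your proposal follows essentially the paper's proof. The Green formula $\llangle \slashed{D}\phi,\psi\rrangle-\llangle\phi,\slashed{D}\psi\rrangle=\oint_{\Sigma}\langle c(e^{2n})\phi,\psi\rangle$ together with Lemma \ref{anticommute} ii) gives symmetry, and the same boundary term forces $\varphi|_{\Sigma}\in L^{2}\Gamma^{APS}_{-,\leq N}$ for $\varphi$ in the adjoint domain. Regularity then comes from the subelliptic estimate (\ref{ellipticformula}) in the manner of \cite{farinelli1998spectrum}; your closedness step and your treatment of the regularity obstacle are more explicit than the paper, which simply defers to that reference.

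One small slip: the vanishing boundary term gives $c(e^{2n})^{*}\varphi|_{\Sigma}\perp\Gamma_{-}^{APS}$ (equivalently $\varphi|_{\Sigma}\perp\Gamma_{+}^{APS}$), not $c(e^{2n})^{*}\varphi|_{\Sigma}\perp\Gamma_{+}^{APS}$. Since $c(e^{2n})$ swaps the two spaces, the latter would put $\varphi|_{\Sigma}$ in $\Gamma_{+}^{APS}$. Your stated conclusion $\varphi|_{\Sigma}\in L^{2}\Gamma^{APS}_{-,\leq N}$ is nonetheless the correct one.
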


\begin{proof}
By integrating the divergence of the one form $\gamma(X)=\langle c(X)\phi,\psi \rangle$, we
have 
\begin{equation}  \label{selfadjD}
\llangle \slashed{D}\phi, \psi \rrangle-\llangle\phi, \slashed{D}\psi\rrangle =
\oint\limits_{\Sigma} \langle c(e^{2n})\phi, \psi \rangle
\end{equation}

By Lemma \ref{anticommute}, if $\phi, \psi \in D_{+,N}(\slashed{D})$, then $%
c(e^{2n})\phi \in L^{2}\Gamma^{APS}_{-}$. Thus the boundary term of %
(\ref{selfadjD}) vanishes.

Let $\slashed{D}^{*}$ be the adjoint operator. Its domain is 
\begin{equation}  \label{adjdomain}
\begin{aligned} D_{-,N}(\slashed{D}^{*}):=\{ &\zeta \in
L^{2}\Gamma_{\leq N} (\Omega,\slashed{S}) : \exists \chi \in
L^{2}\Gamma_{\leq N}(\Omega,\slashed{S})  \\ &\text{ with } \llangle
\chi,\psi\rrangle=\llangle\zeta, \slashed{D} \psi\rrangle \forall \psi\in
D_{-,N}(\slashed{D}) \} \end{aligned}
\end{equation}

By (\ref{selfadjD}), we have 
\begin{equation}
\chi=\slashed{D}\zeta,\text{ and } \oint\limits_{\Sigma} \langle c(e^{2n})
\zeta, \psi \rangle=0 \text{ } \forall \psi \in D_{-,N}(\slashed{D}).
\end{equation}

\noindent Thus $\zeta \in \Gamma^{APS}_{-,\leq N}$. For general $\zeta$, using the
subelliptic estimate (Lemma \ref{ellipticest}) similar to the one in \cite%
{farinelli1998spectrum}, we conclude the result.
\end{proof}

\begin{mydef}
\label{D-6-1} For $\lambda \in \mathbb{R}$, define $N_{\lambda
}(\slashed{D}):=\{\psi \in D_{-,N}(\slashed{D}):(\slashed{D}-\lambda )\psi =0\}$
\end{mydef}

The theorem below follows from a similar reasoning in \cite%
{farinelli1998spectrum}.

\begin{mythm}
\label{spectrumthm} It holds that\newline
1. The space $N_{0}(\slashed{D})$ is finite dimensional.\newline
2. $\slashed{D}$ with domain $D_{+,N}(\slashed{D})$ is a Fredholm operator.\newline
3. $L^{2}\Gamma_{\leq N}(\Omega ,\slashed{S})=N_{\mu }(\slashed{D})\oplus \im%
(\slashed{D}-\mu )$.
\end{mythm}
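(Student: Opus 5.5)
The plan is to run the Farinelli--Schwarz argument for Dirac operators with APS-type boundary conditions, using the subelliptic estimate of Lemma \ref{ellipticest} in place of elliptic regularity. The one additional ingredient is the Rellich-type compactness of the embedding $H^{1}_{FS}(\Omega,\slashed{S})\hookrightarrow L^{2}(\Omega,\slashed{S})$ for a bounded domain. This compactness holds because the horizontal frame $\{e_i\}$ satisfies H\"ormander's condition: by (\ref{eiej}), the brackets produce $T$. On $L^{2}\Gamma_{\leq N}$ the $T$-derivative is in any case controlled, since $\nabla_{T}=a^{-1}(\nabla_{V}-c^{i}\nabla_{e_{i}})$ and $|\nabla_V\psi|\le N|\psi|$. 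Consequently $\psi\in H^{1}_{FS}\cap L^{2}\Gamma_{\leq N}$ controls the full $H^{1}$ norm, and the ordinary Rellich theorem applies. I would record this as a preliminary observation.

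For part 1, take a sequence $\psi_k\in N_0(\slashed{D})$ with $\|\psi_k\|_{L^2}=1$. Since $\slashed{D}\psi_k=0$, Lemma \ref{ellipticest} gives $\|\psi_k\|_{H^1_{FS}}^2\le C_\delta$. Compactness then yields an $L^2$-convergent subsequence. So the unit ball of $N_0(\slashed{D})$ is compact, and $N_0(\slashed{D})$ is finite dimensional. The same argument with $\slashed{D}-\mu$ works for every $N_\mu(\slashed{D})$, because $\|\slashed{D}\psi\|=|\mu|\|\psi\|$ on $N_\mu$.

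For part 2, I first show that $\im(\slashed{D}-\mu)$ is closed. Suppose $(\slashed{D}-\mu)\psi_k\to\phi$ in $L^2$. Replace $\psi_k$ by its component in the $L^2$-orthogonal complement of $N_\mu(\slashed{D})$. Then $\|\psi_k\|_{L^2}$ is bounded; otherwise normalize, apply Lemma \ref{ellipticest} and compactness, and obtain a unit vector in $N_\mu\cap N_\mu^\perp$, a contradiction. The lemma now gives an $H^1_{FS}$ bound on $\psi_k$, hence an $L^2$-convergent subsequence, and the limit lies in the domain because the boundary condition and the Fourier truncation are closed conditions. This gives closed range.

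The cokernel is $\ker(\slashed{D}^*-\mu)$. By the preceding self-adjointness theorem, $\slashed{D}^*$ has the same domain with the APS condition, so this kernel is again some $N_\mu(\slashed{D})$ and is finite dimensional by part 1. Hence $\slashed{D}-\mu$ is Fredholm. For the index statement with domain $D_{+,N}$ I would use the same argument: its adjoint lives on $D_{-,N}$ by Lemma \ref{anticommute}, since $c(e^{2n})$ swaps $\Gamma^{APS}_{\pm}$.

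Part 3 then follows from the closed range theorem: $L^2\Gamma_{\leq N}=\ker((\slashed{D}-\mu)^*)\oplus\overline{\im(\slashed{D}-\mu)}$, and $\ker((\slashed{D}-\mu)^*)=N_\mu(\slashed{D})$ for real $\mu$ by self-adjointness.

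I expect the main obstacle to be the self-adjointness input, namely the identification of $\slashed{D}^*$ with the operator on the same APS domain. That proof only sketched the regularity of $\zeta$, that is, showing that $\zeta\in D(\slashed{D}^*)$ lies in $H^1_{FS}$. I would try to handle it by expanding $\zeta|_\Sigma$ in the eigenspinors $\psi_k$ of $\Tilde{\slashed{D}}^{\Sigma}$, whose spectrum is discrete by Lemma \ref{hyperellipticest}. One then builds a collar parametrix $\sum e^{-\lambda_k t}c_k\psi_k$ near $\Sigma$, as in the APS construction, reducing to interior subelliptic regularity; the $S^1$-truncation makes this tractable.
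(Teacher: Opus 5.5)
Your main line is the Farinelli--Schwarz argument that the paper cites without details, and it works for the realization on $D_{-,N}(\slashed{D})$. The $S^1$-truncation controls $\nabla_T$, so Lemma~\ref{ellipticest} plus Rellich makes each $N_\mu(\slashed{D})$ finite dimensional and gives closed range. Self-adjointness identifies the cokernel with $N_\mu(\slashed{D})$, and the closed range theorem gives part 3. Two small points need care. The control of $\nabla_V$ holds in $L^2$ via orthogonality of the Fourier modes, not pointwise. And applying Lemma~\ref{ellipticest}, which is proved only for smooth sections, to elements of $D_{-,N}(\slashed{D})$ needs a density argument together with continuity of the boundary pairing from the trace theorem.

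The step that fails is your paragraph on $D_{+,N}(\slashed{D})$, for two reasons.

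\emph{The adjoint is misidentified.} By (\ref{selfadjD}) and Lemma~\ref{anticommute}, $c(e^{2n})$ maps $\Gamma^{APS}_{+}$ onto $\Gamma^{APS}_{-}$. Hence $\oint_\Sigma\langle c(e^{2n})\phi,\psi\rangle$ vanishes for all $\phi$ with $\phi|_\Sigma\in\Gamma^{APS}_{+}$ exactly when $\psi|_\Sigma\perp\Gamma^{APS}_{-}$, i.e.\ when $\psi|_\Sigma\in L^2\Gamma^{APS}_{+}$. So the adjoint boundary condition is again $\Gamma^{APS}_{+}$, not $\Gamma^{APS}_{-}$. The paper's proof of the self-adjointness theorem says exactly that the boundary term vanishes on $D_{+,N}$.

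\emph{The ``same argument'' is not available.} The proof of Lemma~\ref{ellipticest} discards $\oint_\Sigma\langle\Tilde{\slashed{D}}^{\Sigma}\psi,\psi\rangle\,d\Sigma$ because it is $\le 0$, which requires $\psi|_\Sigma\in\Gamma^{APS}_{-}$; the ``$+$'' in the lemma's final clause is a slip. For boundary values in $\Gamma^{APS}_{+}$ this term is nonnegative and not controlled by interior quantities, and one should expect an infinite-dimensional kernel. In the Riemannian model of the unit ball in $\mathbb{R}^m$, with the same inward-normal convention, every harmonic spinor homogeneous of degree $k$ restricts to an eigenspinor of $-c(\nu)D-\nabla_\nu+\tfrac{H}{2}$ with eigenvalue $k+\tfrac{m-1}{2}>0$. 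All of these lie in the kernel of the ``$+$'' realization.

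So part 2 must be read with $D_{-,N}(\slashed{D})$. This is the domain of the self-adjointness theorem, and the one used in Theorem~\ref{T-1-4}, where $\pi_{+}(\psi)=\pi_{+}(\rho)$ puts $\psi-\rho$ in $\Gamma^{APS}_{-}$. With that reading your argument proves part 2, and the claim about $D_{+,N}$ should be dropped rather than argued.
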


For the general inhomogeneous boundary value problem (\ref{1-5}) for the
Dirac operator, we have the following result.

\begin{proof} \textbf{(of Theorem \ref{T-1-4}) }
Consider $\phi -\slashed{D}\rho \in L^{2}_{\leq N}(\Omega,\slashed{S})$. By
Theorem \ref{spectrumthm}, $\phi -\slashed{D}\rho \in \im(\slashed{D})$ iff $\phi
-\slashed{D}\rho \in N_{0}(\slashed{D})^{\perp}$, proving the theorem.
\end{proof}









\appendix

\section{Some useful formulas}

Replacing $\psi $ by $\slashed{D}\psi $ in (\ref{reillyeqn}), and integrating
by parts, we have%
\begin{equation}
\begin{aligned} &-\oint_{\Sigma} \langle \Tilde{\slashed{D}}^{\Sigma}
\slashed{D}\psi, \slashed{D}\psi \rangle \, d\Sigma + \frac{1}{2}\oint_{\Sigma} H
|\slashed{D}\psi|^{2} \, d\Sigma \\ &-\oint\limits_{\Sigma} \langle
c(e^{2n})\Lambda\nabla_{T}\slashed{D} \psi, \psi\rangle-\oint\limits_{\Sigma}
\langle \psi, c(e^{2n})\Lambda\nabla_{T}\slashed{D} \psi\rangle\\ &=
\int_{\Omega} \Big[ -|\slashed{D}^{2} \psi|^{2} + W |\slashed{D}\psi|^{2} +
|\nabla \slashed{D}\psi|^{2} - \big\langle \slashed{D}\Lambda \nabla_{T}
\slashed{D}\psi, \psi \big\rangle -\big\langle \psi, \slashed{D}\Lambda \nabla_{T}
\slashed{D}\psi \big\rangle \Big] dV. \end{aligned}  \label{dreillyeqn}
\end{equation}

To study the self-adjointness of the operator $P:=\slashed{D}\Lambda \nabla
_{T}\slashed{D}$, we have the following useful formula:%
\begin{eqnarray}
\llangle P\phi ,\psi \rrangle-\llangle\phi ,P\psi \rrangle
&=&\oint\limits_{\Sigma }\langle c(e^{2n})\Lambda \nabla _{T}\slashed{D}\phi
,\psi \rangle   \label{selfadjP} \\
&&-\oint\limits_{\Sigma }\langle \phi ,c(e^{2n})\Lambda \nabla
_{T}\slashed{D}\psi \rangle +\oint\limits_{\Sigma }\alpha \langle \slashed{D}\phi
,\Lambda \slashed{D}\psi \rangle .  \notag
\end{eqnarray}

To study the self-adjointness of the operator $\slashed{D}^{2}$, we have the
following useful formula:%
\begin{equation}
\llangle \slashed{D}^{2}\phi ,\psi \rrangle-\llangle\phi ,\slashed{D}^{2}\psi %
\rrangle=\oint\limits_{\Sigma }\langle c(e^{2n})\slashed{D}\phi ,\psi \rangle
-\oint\limits_{\Sigma }\langle \phi ,c(e^{2n})\slashed{D}\psi \rangle
\label{selfadjD2}
\end{equation}

\noindent When the torsion is free, we have 
\begin{equation}
\lbrack \slashed{D},\nabla _{T}]=0.
\end{equation}

\medskip

\bigskip


\end{document}